\setlist[enumerate]{label = $(\alph*)$, leftmargin = *}
\newtheorem{thm}{Theorem}[section]
\newtheorem{cor}[thm]{Corollary}
\newtheorem{lem}[thm]{Lemma}
\newtheorem{deff}[thm]{Definition}
\newtheorem{prop}[thm]{Proposition}
\newtheorem{notacao}[thm]{Notation}
\theoremstyle{remark}
\newcommand{\DRH}{{\sf DRH}}
\newcommand{\DA}{{\sf DA}}
\newcommand{\DS}{{\sf DS}}
\newcommand{\h}{{\sf H}}
\newcommand{\W}{{\sf W}}
\newcommand{\V}{{\sf V}}
\newcommand{\G}{{\sf G}}
\newcommand{\R}{{\sf R}}
\newcommand{\Sl}{{{\sf Sl}}}
\newcommand{\DO}{{{\sf DO}}}
\newcommand{\pseudo}[2]{\overline{\Omega}_{#1} {\sf{#2}}}
\newcommand{\pseudok}[2]{\Omega^{\kappa}_{#1} {\sf{#2}}}
\newcommand{\Aa}{\mathbb{A}}
\newcommand{\zz}{\mathbb{Z}}
\newcommand{\nn}{\mathbb{N}}
\newcommand{\lbf}[2][]{{\sf{lbf}}\ifthenelse{\isempty{#1}}{}{_{#1}}(#2)}
\newcommand{\cum}[1]{\vec{c}(#1)}
\newcommand{\card}[1]{\left\lvert#1\right\rvert}
\newcommand{\leng}[1]{\left\lceil#1\right\rceil}
\newcommand{\just}[2]{\stackrel{#2}{#1}}
\newcommand{\bpi}{\overline{\pi}}
\newcommand{\FG}[1]{{\sf FG}_{#1}}
\newcommand{\cf}{{\sf cf}}
\newcommand{\picf}{\pi_{\sf cf}}
\newcommand{\piir}[1]{\pi_{\sf{irr}}(#1)}
\newcommand{\dyck}[1]{{\sf{Dyck}}(#1)}
\newcommand{\word}[1]{{\sf word}(#1)}
\newcommand{\om}[1]{{\sf om}(#1)}
\newcommand{\reg}[1]{{\sf{reg}}(#1)}
\newcommand{\regi}[1]{{\sf{r.ind}(#1)}}
\newcommand{\id}[1]{{\sf{id}}(#1)}
\newcommand{\irr}[1]{{\sf{irr}}(#1)}
\newcommand{\norm}[1]{\left\lVert#1\right\rVert}
\newcommand{\Req}{\mathrel{\mathcal R}}
\newcommand{\Deq}{\mathrel{\mathcal D}}
\newcommand{\Heq}{\mathrel{\mathcal H}}
\newcommand{\tq}{{{\sf q}}}
\newcommand{\tv}{{{\sf v}}}
\newcommand{\tp}{{{\sf p}}}
\newcommand{\Tt}{{{\sf t}}}
\newcommand{\w}{\overline{w}}
\newcommand{\vx}{\vec{x}}
\newcommand{\vy}{\vec{y}}
\newcommand{\vq}{\vec{q}}
\newcommand{\iG}{\mathcal{G}}
\newcommand{\iL}{\mathcal{L}}
\newcommand{\iA}{\mathcal{A}}
\newcommand{\iT}{\mathcal{T}}
\newcommand{\iF}{\mathcal{F}}
\begin{document}
\title{The $\kappa$-word problem over $\DRH$}
\author{C\'elia Borlido}
\address{Centro de Matem\'atica e Departamento de Matem\'atica, Faculdade de Ci\^encias,
Universidade do Porto, Rua do Campo Alegre, 687, 4169-007 Porto,
Portugal}
\email{cborlido@fc.up.pt}
\thanks{2010 Mathematics Subject Classification. Primary 20M05;
  Secondary 20M07, 68Q45.\\
  Keywords and phrases: word problem, kappa-word, $\Req$-class,
  pseudovariety, free profinite semigroup, canonical form.
}
\begin{abstract}
  Let $\h$ be a pseudovariety of groups in which the $\kappa$-word problem is
  decidable. Here, $\kappa$ denotes the
  canonical implicit signature, which consists of the multiplication and
  the $(\omega-1)$-power.
  We prove that the $\kappa$-word problem is also decidable over
  $\DRH$, the pseudovariety of all finite semigroups whose regular
  $\Req$-classes lie in $\h$.
  Further, we present a canonical form for elements in the free
  $\kappa$-semigroup over $\DRH$, based on the knowledge of a
  canonical form for elements in the free $\kappa$-semigroup
  over~$\h$.
  This extends work of Almeida and Zeitoun on the pseudovariety of
  all finite $\Req$-trivial semigroups.
\end{abstract}
\maketitle
\section{Introduction}
Decidability of word problems has driven researchers'
attention for many years.
Generally speaking, it consists in finding an algorithm (or disprove
its existence) to test whether two
representations of elements of a given structure define
the same element.
On the other hand, pseudovarieties play an essential role since
Eilenberg's correspondence was formulated in 1976 \cite[Chapter VII,
Theorem 3.4s]{MR0530383}.
He showed that pseudovarieties of finite semigroups
are in a bijective correspondence with
varieties of rational languages.
In turn, the study of the latter is strongly motivated by its
application in Computer Science, namely, through the study of Automata
Theory.
For that reason, deciding whether two given expressions have the same
value over all semigroups in a certain pseudovariety seems to be a
relevant question.
Besides these motivations, solving the word problem for $\sigma$-words
(with $\sigma$ an implicit signature) over a pseudovariety $\V$ appears as
an intermediate step 
to prove a stronger property named \emph{tameness} \cite{MR1750489}.
That property on $\V$ has been used to prove decidability of the
membership problem
for pseudovarieties obtained from $\V$ through the application of
join, (two-sided) semidirect product, and Mal'cev product with other
pseudovarieties.

Almeida and Zeitoun \cite{palavra} solved the $\kappa$-word problem over the
pseudovariety~$\R$ of all finite semigroups
whose regular $\Req$-classes are trivial.
Their methods have been extended by Moura~\cite{MR2883026} to
the pseudovariety $\DA$, consisting of all finite semigroups whose regular
$\Deq$-classes are aperiodic subsemigroups.
In this paper, we solve the same problem for some of the pseudovarieties of
the form $\DRH$, containing all finite semigroups whose regular
$\Req$-classes lie in a pseudovariety of groups $\h$.
The only condition we impose on $\h$ is quite reasonable: we require
that it has a decidable $\kappa$-word problem.
Further, combining Moura's work with our own,
it is expected that the same approach may be extended to $\DO \cap
\overline\h$, that is, the pseudovariety of all finite semigroups
whose regular $\Deq$-classes are orthodox semigroups and whose
subgroups lie in~$\h$. The pseudovariety~$\DO \cap \overline \h$ may
be considered as a non aperiodic version of~$\DA$ in the same way
that~$\DRH$ may be seen as a non aperiodic version of~$\R$.
All of these pseudovarieties are specializations of $\DS$, the
pseudovariety of all finite semigroups whose regular $\Deq$-classes
are subsemigroups, which is known to be of particular interest in the
Theory of Formal Languages (see,
for instance,~\cite{MR0444824}).
Hence, their investigation may lead to a better understanding of
$\DS$.

This paper is organized as follows.
Section \ref{section2} of preliminaries is divided into four
subsections:
in the first we set up the general notation; we recall some aspects
related with theory of profinite semigroups in the second; we describe
the $\kappa$-word problem in the third; and we
reserve the fourth to the statement of some general facts on the structure of
the free pro-$\DRH$ semigroup.
In Section \ref{section3} we introduce $\DRH$-automata, which are a
generalization of $\R$-automata defined in \cite{palavra}.
We devote Section \ref{section4} to the presentation of a canonical
form for $\kappa$-words over $\DRH$ assuming the knowledge of a
canonical form for $\kappa$-words over $\h$.
Section~\ref{section5} is rather technical and serves the purpose of
preparing Section~\ref{section6}, in which we describe an algorithm to
solve the $\kappa$-word problem over $\DRH$.
Finally, in Section \ref{section7} we apply our results to the
particular case of the pseudovariety ${\sf DRG}$.
\section{Preliminaries}\label{section2}

We assume the reader is familiar with pseudovarieties, (pro)finite
semigroups, and the basic topology.
For further reading we refer to \cite{livro,profinite,topologia}. 
Some knowledge of automata theory may be useful, although no use
of deep results is made.
For this topic, we refer to~\cite{MR2567276}.
A study of pseudovarieties of the form $\DRH$ may be
found in~\cite{drh}.

\subsection{Notation}

Given a semigroup $S$, we let $S^I$ represent the monoid obtained by
adjoining an identity to~$S$ (even if $S$ is already a monoid).
If $s_1, \ldots, s_n$ are elements in $S$, then $\prod_{i =
  1}^n s_i$ denotes the product $s_1\cdots s_n$.
An infinite sequence $(s_i)_{i \ge 1} \subseteq S$ defines the \emph{infinite
  product} $\left(\prod_{i = 1}^n s_i\right)_{n \ge 1}$.

The free semigroup (respectively, monoid) on a (possibly infinite) set
$C$ is denoted~$C^+$ (respectively, ~$C^*$).
Elements in $C^*$ are called \emph{words}.
The \emph{empty word} of $C^*$ is the identity element $\varepsilon$.
The \emph{length} of a word $u \in C^*$ is $\card u = 0$ if $u =
\varepsilon$, and $\card u = n$ if $u = c_1\cdots c_n$, for certain
$c_1, \ldots, c_n \in C$.
The free group on $C$ is denoted~$\FG C$, and we denote by $C^{-1}$ the
set $\{c^{-1}\colon c \in C\}$ disjoint from~$C$, where~$c^{-1}$
  represents the inverse of~$c$ in~$\FG C$.

We say that a finite set of symbols is an \emph{alphabet}.
Generic alphabets are denoted~$A$, while $\Sigma = \{0,1\}$ is a
fixed two-element alphabet.

Let $\iA = \langle V, \to, \tq, F \rangle$ be a \emph{deterministic
automaton} over an alphabet $A$ (where $V$ is the set of \emph{states},
$\to$ is the \emph{transition 
function}, and $\{\tq\}$ and $F$ are the sets of \emph{initial} and \emph{terminal
states}, respectively).
We write transitions in $\iA$ as $\tv \xrightarrow{a}\tv.a$, for $\tv
\in V$ and $a \in A^*$.
Given a state $\tv \in V$, we denote by $\iA_\tv$ the sub-automaton of
$\iA$ \emph{rooted at} $\tv$, that is, the (deterministic) automaton $\langle
\tv.A^*, \to|_{\tv.A^*}, \tv, F\cap (\tv.A^*)\rangle$.

The symbols $\Req$, $\Heq$, and $\Deq$ denote some of
\emph{Green's relations}.
We reserve the letter~$\h$ to denote an arbitrary pseudovariety of groups, and $\DRH$
stands for the pseudovariety of all finite semigroups whose regular
$\Req$-classes belong to $\h$.
Other pseudovarieties playing a role in this work are
$\S$, the pseudovariety of all finite semigroups;
$\G$, the pseudovariety of all finite groups;
$\R$, the pseudovariety of all finite semigroups with trivial
$\Req$-classes;
$\DS$, the pseudovariety of all finite semigroups whose regular
$\Deq$-classes are subsemigroups;
$\DO$, the pseudovariety of all finite semigroups whose regular
$\Deq$-classes are orthodox subsemigroups;
and $\overline \h$, the pseudovariety of all finite semigroups whose
subgroups lie in $\h$.

\subsection{Profinite semigroups}

Let $\V$ be a pseudovariety of semigroups.
We denote the \emph{free $A$-generated pro-$\V$  semigroup} by
$\pseudo AV$.
Elements of $\pseudo AV$ are called \emph{pseudowords over $\V$} (or simply
\emph{pseudowords}, when $\V = \S$).
Let $\iota:A \to \pseudo AV$ be the generating mapping of $\pseudo AV$.
We point out that, unless $\V$ is the trivial pseudovariety,~$\iota$
is injective.
For that reason, we often identify the alphabet $A$ with its image
under $\iota$.
With this assumption, we obtain that the free semigroup $A^+$ is a
subsemigroup of $\pseudo AV$ and thus, it is coherent to say that $I
\in (\pseudo AV)^I$ is the \emph{empty word/pseudoword}.
On the other hand, if $B \subseteq A$, then we have an injective
continuous homomorphism $\pseudo BV \to \pseudo AV$, induced by the
inclusion map $B \to \pseudo AV$.
So, we consider~$\pseudo BV$ as a subsemigroup of~$\pseudo AV$.
In turn, if~$\W$ is a subpseudovariety of~$\V$, then we denote
by~$\rho_{\V,\W}$ the natural projection from~$\pseudo AV$ onto~$\pseudo
AW$. 
We shall write~$\rho_\W$ when~$\V$ is clear from the context.
Whenever the pseudovariety $\Sl$ of all finite semilattices is
contained in $\V$, we denote the
projection~$\rho_\Sl = \rho_{\V, \Sl}$ by $c$ and call it the \emph{content
  function}.

Finally, a \emph{pseudoidentity over $\V$} (or simply
\emph{pseudoidentity}, when $\V = \S$) is a formal equality $u =
v$, with $u,v \in \pseudo AV$.
We say that a pseudoidentity $u = v$ holds in a pseudovariety $\W
\subseteq \V$ if the interpretations of $u$ and $v$ coincide in every
semigroup of $\W$.
If that is the case, then we write $u =_\W v$.

\subsection{The \texorpdfstring{$\kappa$}{$k$}-word problem}

The \emph{canonical implicit signature}, denoted $\kappa$, consists of
two implicit operations: the \emph{multiplication} $\_\cdot\_$, and the
\emph{$(\omega-1)$-power} $\_^{\omega-1}$.
Each of these operations has a natural interpretation over a given
profinite semigroup $S$: the multiplication sends each pair $(s_1, s_2)$
to its product $s_1s_2$, and the $(\omega-1)$-power sends each element
$s$ to the limit $\lim_{n \ge 1} s^{n!-1}$.
We define \emph{$\kappa$-terms} over an alphabet $A$ inductively as follows:
\begin{itemize}
\item the empty word $I$ and each letter $a \in A$ are $\kappa$-terms;
\item if $u$ and $v$ are $\kappa$-terms, then $(u \cdot v)$ and
  $(u^{\omega-1})$ are also $\kappa$-terms.
\end{itemize}
Of course, each $\kappa$-term may naturally be seen as representing an element of
the free $\kappa$-semigroup $\pseudok AS$, and conversely, for each
element of $\pseudok AS$ there is a (usually non-unique) $\kappa$-term
representing it.
We call \emph{$\kappa$-words} the elements of $\pseudok AS$.

Let $\ell$ be an integer. We may generalize the $(\omega-1)$-power by
letting $x^{\omega+\ell} = \lim_{n \ge 1} x^{n!+\ell}$.
Then, for every $q \ge 1$, the expressions $(x^{\omega-1})^q$ and
$x^{\omega-1}x^{q}$ represent $\kappa$-words (by $u^q$ we mean $q$ times the
product of $u$), and the equalities $(x^{\omega-1})^q = x^{\omega-q}$ and
$x^{\omega-1}x^{q+1} = x^{\omega+q-1}$ hold in $\pseudok AS$.
It is usual to consider the extended implicit signature
$\overline\kappa$ that contains the multiplication and all
$(\omega+q)$-powers (for an integer $q$).
We define both $\overline\kappa$-term and $\overline\kappa$-word in
the same fashion as we defined $\kappa$-term and $\kappa$-word,
respectively.
Clearly, $\kappa$-words are $\overline\kappa$-words and conversely,
but a $\overline\kappa$-term may not be a $\kappa$-term.

Saying that the \emph{$\kappa$-word problem} over a pseudovariety $\V$
is decidable amounts to say that there exists an algorithm
determining whether the interpretation of two given $\kappa$-terms
coincides in every semigroup of $\V$, that is, whether they define the
same element of~$\pseudok AV$.
Although our goal is to solve the $\kappa$-word problem over $\DRH$
(under certain reasonable conditions on~$\h$),
it shall be useful to consider $\overline\kappa$-terms instead
of $\kappa$-terms in the intermediate steps.

The implicit signature $\overline\kappa$ enjoys a nice property that we state
here for later reference.

\begin{lem}
  [{\cite[Lemma 2.2]{palavra}}]
  \label{l:6}
  Let $u$ be a $\overline\kappa$-term and let $u = u_\ell au_r$ be a factorization
  of $u$ such that $c(u) = c(u_\ell)\uplus \{a\}$. Then, $u_\ell$ and
  $u_r$ are $\overline\kappa$-terms.
\end{lem}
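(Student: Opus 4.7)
The plan is to proceed by structural induction on the $\overline\kappa$-term $u$, proving in fact the slightly stronger statement that for any factorization $u = u_\ell a u_r$ with only $a \notin c(u_\ell)$ (rather than the full disjoint-union hypothesis), both $u_\ell$ and $u_r$ are $\overline\kappa$-terms. This generalization is forced by the multiplicative case below: when the first occurrence of $a$ falls inside the right factor, the disjoint-union condition does not transfer to that factor, whereas the weaker ``no $a$ on the left'' condition does. Since the hypothesis of the lemma implies $a \notin c(u_\ell)$, the general statement yields what is claimed.

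The base cases are immediate: if $u = I$ there is no such factorization, and if $u$ is a single letter, necessarily $u = a$ and $u_\ell = u_r = I$. Suppose now $u = v \cdot w$ for smaller $\overline\kappa$-terms $v, w$. If $a \in c(v)$, the first occurrence of $a$ in $u$ lies inside $v$; writing $v = v_\ell a v_r$ with $a \notin c(v_\ell)$ and applying the inductive hypothesis to $v$ gives that $v_\ell$ and $v_r$ are $\overline\kappa$-terms, whence $u_\ell = v_\ell$ and $u_r = v_r \cdot w$ are $\overline\kappa$-terms. If $a \notin c(v)$ then $a \in c(w)$, and the symmetric argument applied to $w = w_\ell' a w_r$ with $a \notin c(w_\ell')$ yields $u_\ell = v \cdot w_\ell'$ and $u_r = w_r$, again $\overline\kappa$-terms.

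The remaining case is $u = v^{\omega+q}$; since $c(u) = c(v)$ we must have $a \in c(v)$, so we may decompose $v = v_\ell a v_r$ with $a \notin c(v_\ell)$, and the inductive hypothesis on $v$ produces $\overline\kappa$-terms $v_\ell, v_r$. Exploiting the profinite identity $v^{\omega+q} = v \cdot v^{\omega+q-1}$ in $\pseudo AS$ gives the factorization $u = v_\ell \cdot a \cdot (v_r \cdot v^{\omega+q-1})$, whose prefix $v_\ell$ is free of $a$ and whose outer pieces are $\overline\kappa$-terms by construction. The one delicate point — which I expect to be the main technical step in a formal write-up — is to argue that this factorization must coincide with the given one, that is, $u_\ell = v_\ell$ and $u_r = v_r \cdot v^{\omega+q-1}$. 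This relies on the uniqueness of the profinite prefix of a pseudoword that stops just before the first occurrence of a given letter, an essentially standard property of $\pseudo AS$.
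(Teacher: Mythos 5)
A preliminary remark: the paper itself does not prove Lemma~\ref{l:6} --- it imports the statement from \cite[Lemma 2.2]{palavra} --- so your attempt can only be measured against the mathematics, not against an internal argument. Your architecture is the natural (and, I believe, essentially the original) one: strengthen the claim to factorizations at the first occurrence of an arbitrary letter, requiring only $a \notin c(u_\ell)$, and induct on the structure of the term, using the valid identity $v^{\omega+q} = v\cdot v^{\omega+q-1}$ in the power case. Your diagnosis that the disjoint-union hypothesis fails to transfer to the right-hand factor in the product case, so that the strengthening is forced, is also correct: with $u = vw$ and the marker occurring first inside $w$, the induced factorization of $w$ need not satisfy $c(w) = c(w_\ell')\uplus\{a\}$.

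The gap is in the uniqueness property you defer to, in two respects. First, it is needed in \emph{every} inductive case, not only the power case: in the product case the word ``whence'' ($u_\ell = v_\ell$, $u_r = v_r\cdot w$) is exactly an appeal to it, since $u_\ell, u_r$ are a priori arbitrary elements of $(\pseudo AS)^I$ with $u = u_\ell a u_r$ and $a \notin c(u_\ell)$, with no control on $c(u_r)$. Second, and more seriously, this uniqueness is \emph{not} the uniqueness result available here: Proposition~\ref{p:4} requires $c(x) = c(z)$ or $c(xa) = c(zb)$, and those content hypotheses are precisely what your weakened induction hypothesis discards. (Under the hypothesis of Lemma~\ref{l:6} as stated they hold automatically, both sides having content $c(u)$; for arbitrary first-occurrence factorizations they need not.) So you must state and prove the same-letter uniqueness: if $xay = zat$ in $\pseudo AS$ with $a \notin c(x)$ and $a \notin c(z)$, then $x = z$ and $y = t$. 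This is true, but it is a lemma requiring its own argument, not a citation. One proof: for each continuous homomorphism $\varphi$ of $\pseudo AS$ onto a finite semigroup $S$, let $P$ be the submonoid of $S^I$ generated by $\varphi(A\setminus\{a\})$ and form the finite monoid $P \uplus (P\times S^I)$, whose second-kind elements record the pair consisting of the image of the prefix preceding the first $a$ and the image of the remainder, with the evident multiplication; mapping $b\mapsto \varphi(b)$ for $b\neq a$ and $a\mapsto (I,I)$ extends to a continuous homomorphism on $\pseudo AS$ whose value at $u$ is $(\varphi(x),\varphi(y))$ for \emph{any} such factorization, and residual finiteness of $\pseudo AS$ concludes. (Alternatively one can argue via equidivisibility of $\pseudo AS$.) With that lemma supplied, your induction is complete; without it, the proof rests its only nontrivial step on an unproved, and easily misattributed, assertion.
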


\subsection{Structure of free pro-$\DRH$ semigroups}

We start with a uniqueness result on factorization of pseudowords.
\begin{prop}
  [{\cite[Proposition 2.1]{palavra}}]
  \label{p:4}
  Let $x,y,z,t \in \pseudo AS$ and $a,b \in A$ be such that $xay =
  zbt$. Suppose that $a \notin c(x)$ and $b \notin c(z)$. If either
  $c(x) = c(z)$ or
  $c(xa) = c(zb)$, then $x = z$, $a = b$, and $y = t$.
\end{prop}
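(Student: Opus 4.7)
The plan is first to settle the statement for finite words by a direct length argument, and then to lift it to pseudowords using purpose-built finite quotients of $\pseudo AS$ together with residual finiteness.

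\textbf{Finite-word base case.} Assume $x, y, z, t$ are finite words (possibly empty) and, without loss of generality, $|x| \le |z|$. Then $x$ is a prefix of the common word $xay = zbt$, hence also of $z$. If $c(x) = c(z)$ and $|x| < |z|$, the letter $a$ sitting at position $|x|+1$ of $xay$ lies inside $z$, forcing $a \in c(z) = c(x)$, contradicting $a \notin c(x)$. If $c(xa) = c(zb)$ and $|x| < |z|$, then $xa$ is a prefix of $z$, so $c(zb) = c(xa) \subseteq c(z)$, forcing $b \in c(z)$, contradicting $b \notin c(z)$. Either way $|x| = |z|$, which gives $x = z$, $a = b$, $y = t$.

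\textbf{Lifting to pseudowords via separating quotients.} Two elements of $\pseudo AS$ agree iff they have the same image under every continuous homomorphism into a finite semigroup. Fix such a homomorphism $\psi : \pseudo AS \to T$, and set $C = c(x) = c(z)$ in the first case or $C = c(xa) = c(zb)$ in the second. I would then construct a finite semigroup $M_{C,T}$ whose elements encode, for each pseudoword $u$, the ``left basic factorization'' of $u$ at the boundary determined by $C$, enriched by $\psi$-values: an element is either a pair $(c(u), \psi(u))$ when no boundary letter has appeared yet, or a quadruple $(D, s, d, s')$ recording the content and $\psi$-image of the prefix, the boundary letter $d$, and the $\psi$-image of the suffix. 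The multiplication is designed so that, once a boundary letter is committed to, subsequent factors only update the suffix component $s'$. The resulting map $A^+ \to M_{C,T}$ is a homomorphism and extends continuously to $\pseudo AS$.

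Applying this extension to both sides of $xay = zbt$ and reading off the resulting tuple equality, combined with the finite-semigroup analogue of the base-case argument, yields $\psi(x) = \psi(z)$, $a = b$, and $\psi(y) = \psi(t)$. Since $\psi$ is arbitrary among continuous homomorphisms into finite semigroups, we conclude $x = z$, $a = b$, $y = t$.

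\textbf{Main obstacle.} The crux is the correct design of $M_{C,T}$ and the verification that the proposed multiplication is associative and that the extended map is a homomorphism. Both reduce to a short case analysis depending on whether each factor's content stays within the ``pre-boundary'' part $C$ or already contains a boundary letter; the challenge is to set up the bookkeeping so that $\Phi(xay)$ truly records the entire triple $(\psi(x), a, \psi(y))$ rather than collapsing it. The base case and the residual-finiteness reduction are then routine.
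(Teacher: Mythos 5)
Your overall strategy---encode the relevant factorization in a purpose-built finite quotient, then separate points of $\pseudo AS$ by continuous homomorphisms into finite semigroups---is the standard route to this fact (the paper itself gives no proof, only the citation to \cite{palavra}), and it is sound for the first hypothesis, $C = c(x) = c(z)$. There the boundary is the first occurrence of a letter of the \emph{fixed} set $A \setminus C$, and your pair/quadruple data is genuinely closed under multiplication: pair times pair stays a pair because $c(uv) \subseteq C$; pair times quadruple prepends to the prefix coordinates; and once a quadruple appears, everything to its right is absorbed into the suffix coordinate. Combined with the (routine) continuity computation $\Phi(xay) = \bigl(c(x), \psi(x), a, \psi(y)\bigr)$ and residual finiteness, this settles that case. (Your finite-word base case is never actually used in the lifting; it only serves as motivation.)

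The gap is in the second case, $C = c(xa) = c(zb)$. Here $a \in C$, so ``boundary letter'' cannot mean ``first letter outside $C$''; it must mean ``the letter at which the content of the prefix first reaches $C$'', and with that meaning the multiplication you describe is \emph{not well defined}. Concretely, take $A = C = \{a,b,c\}$ and $T$ trivial (or any $T$ with $\psi(ab)=\psi(ba)$): the words $ab$ and $ba$ receive the same pair $(\{a,b\},1)$, yet $c\cdot ab$ has boundary letter $b$ while $c\cdot ba$ has boundary letter $a$, so $\Phi(uv)$ is not a function of $(\Phi(u),\Phi(v))$ and no semigroup structure on $M_{C,T}$ makes $\Phi$ a homomorphism. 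The ``short case analysis'' you appeal to is exactly what breaks: the position where a product first attains content $C$ cannot be computed from the contents and $\psi$-images of the factors. Two repairs are possible. One is to enrich the elements so that they record boundary data for \emph{every} possible carry-in content $E \subseteq A$ (a wreath-product-style construction); this is the real content of the construction, not routine bookkeeping. The simpler repair is to reduce everything to the boundary notion your design does handle, namely a single letter. From $a \notin c(x)$, $b \notin c(z)$ one gets $c(x) = C \setminus\{a\}$ and $c(z) = C\setminus\{b\}$; if $a \neq b$ then $b \in c(x)$ and $a \in c(z)$, so writing $z = z_0 a z_1$ at the first occurrence of $a$ in $z$ (existence of such factorizations of pseudowords is a compactness argument you also need to supply) and applying single-letter uniqueness to $xay = z_0 a (z_1 b t)$ gives $x = z_0$, whence $b \notin c(z_0) = c(x)$, a contradiction. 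Hence $a = b$, and then $(x,y)$ and $(z,t)$ are both factorizations at the first occurrence of $a$, so single-letter uniqueness yields $x=z$ and $y=t$; the same trick disposes of the first case as well. Without one of these repairs, the proposal does not go through.
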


This motivates the definition of \emph{left basic factorization} of a
pseudoword $u \in \pseudo AS$: it is the unique triple $\lbf u = (u_\ell, a,
u_r)$ of $(\pseudo AS)^I \times A \times (\pseudo AS)^I$ such that $u =
u_\ell a u_r$, $a \notin c(u_\ell)$, and $c(u) = c(u_\ell a)$.
The left basic factorization is also well defined over each
pseudovariety~$\DRH$.
\begin{prop}
  [{\cite[Proposition 2.3.1]{drh}}]\label{p:1}
  Every element $u \in \pseudo A{DRH}$ admits a unique factorization
  of the form $u = u_\ell a u_r$ such that $a \notin c(u_\ell)$
  and $c(u_\ell a) = c(u)$.
\end{prop}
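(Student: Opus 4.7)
The strategy is to derive the statement from its $\pseudo AS$-analogue (Proposition \ref{p:4}) via the canonical projection $\rho_{\DRH}\colon \pseudo AS \to \pseudo A{DRH}$. The key compatibility is that $\Sl \subseteq \DRH$, so the content function factors through $\pseudo A{DRH}$; in particular, the hypotheses $a \notin c(u_\ell)$ and $c(u_\ell a) = c(u)$ translate cleanly between the two free pro-semigroups.

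For existence, I would lift $u$ to any $\tilde u \in \pseudo AS$ with $\rho_{\DRH}(\tilde u) = u$ and apply the left basic factorization available in $\pseudo AS$ (obtained by approximating $\tilde u$ by words $w_n \to \tilde u$, factoring each combinatorially as $w_n = w_{n,\ell} a_n w_{n,r}$, extracting a convergent subnet by compactness of $(\pseudo AS)^I \times A \times (\pseudo AS)^I$, and using Proposition \ref{p:4} to control the limit). Projecting the resulting triple under $\rho_{\DRH}$ and invoking continuity of multiplication together with preservation of content then yields a factorization of $u$ satisfying the stated conditions.

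For uniqueness, suppose $u = u_\ell a u_r = u'_\ell b u'_r$ are two such factorizations in $\pseudo A{DRH}$. I would aim to establish a $\DRH$-analogue of the cancellation statement from Proposition \ref{p:4}: if $xay = zbt$ in $\pseudo A{DRH}$ with $a \notin c(x)$, $b \notin c(z)$, and either $c(x) = c(z)$ or $c(xa) = c(zb)$, then $x = z$, $a = b$, and $y = t$. To derive this, I would approximate both $xay$ and $zbt$ by sequences of words, use the uniqueness of the combinatorial LBF at each finite stage, and pass to the limit after extracting subnets along which all factors converge in $\pseudo A{DRH}$; continuity of multiplication and of $c$ would then force the two limiting triples to coincide.

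The main obstacle is the descent of the $\pseudo AS$-level LBF to a well-defined assignment on $\pseudo A{DRH}$: one must show that two lifts $\tilde u_1, \tilde u_2 \in \pseudo AS$ of the same element of $\pseudo A{DRH}$ have LBFs whose components agree after projection by $\rho_{\DRH}$. This independence of lift is precisely where the specific structure of $\DRH$ must intervene --- the regular $\Req$-classes being groups in $\h$ forces the prefix data recorded by the LBF to be respected by $\rho_{\DRH}$ --- and I expect this step to require a direct analysis inside finite $\DRH$-semigroups rather than a formal consequence of Proposition \ref{p:4}.
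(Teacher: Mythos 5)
First, note that the paper does not prove this proposition at all: it is imported verbatim from the literature (Almeida--Weil, \cite[Proposition 2.3.1]{drh}), so there is no in-paper argument to compare against; your proposal must therefore be judged on its own merits, and it has a genuine gap.

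The existence half of your argument is fine: lift $u$ to any $\tilde u \in \pseudo AS$, take the left basic factorization there (which exists by compactness plus Proposition~\ref{p:4}), and project by $\rho_{\DRH}$; since $\Sl \subseteq \DRH$ the content conditions are preserved, and no independence-of-lift is needed merely to produce one factorization. The gap is in uniqueness, which is where all the mathematical content of the proposition lies. Your plan --- approximate $xay$ and $zbt$ by words, invoke the combinatorial uniqueness of the left basic factorization of words, and pass to the limit --- is circular. The factors $x,y,z,t$ are pseudowords, so there is no finite stage at which word-level uniqueness applies to them; what you can do is approximate the common value $u = xay = zbt$ by a net of words $w_n$, factorize each $w_n$ combinatorially, and extract a convergent subnet. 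But this only produces yet another triple $(x',a',y')$ satisfying the two content conditions, i.e.\ it reproves existence; nothing identifies $(x',a',y')$ with $(x,a,y)$ or with $(z,b,t)$ unless you already know the factorization is unique, which is exactly what is being proved. The underlying obstruction is that equality modulo $\DRH$ is far coarser than equality of words: two nets of words converging to the same element of $\pseudo A{DRH}$ can have completely unrelated left basic factorizations, and continuity of multiplication and of $c$ cannot force their limits to agree. (Over $\pseudo AS$ the analogous uniqueness, Proposition~\ref{p:4}, exploits equidivisibility of the free profinite semigroup, a property one cannot simply transport along $\rho_{\DRH}$.) Your closing paragraph concedes that a ``direct analysis inside finite $\DRH$-semigroups'' is needed, but no such analysis is given, and the heuristic offered --- that regular $\Req$-classes being pro-$\h$ groups ``forces the prefix data to be respected'' --- is a restatement of the desired conclusion rather than an argument. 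This missing step is precisely the structural theorem of Almeida and Weil that the paper cites instead of proving.
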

Then,  whenever $u \in \pseudo A{DRH}$, we also say that the triple
$\lbf u = (u_\ell, a, u_r)$ described in Proposition~\ref{p:1} is the
left basic factorization of~$u$.

We may iterate the left basic factorization of a pseudoword $u$ (over
$\DRH$) as follows.
Set $u_0' = u$. For $k \ge 0$, if $u_k' \neq I$, then we let
$(u_{k+1}, a_{k+1}, u_{k+1}')$ be the left basic factorization of
$u_k'$.
Since the contents $(c(u_ka_k))_{k\ge 1}$ form a decreasing
sequence for inclusion, either there exists an index $k$ such that
$u_k' = I$ or, for all~$m \ge k$, $c(u_ka_k) = c(u_ma_m)$.
The \emph{cumulative content} of $u$ is $\cum u =
\emptyset$ in the former case, and it is $\cum u
= c(u_ka_k)$ otherwise.
In particular, Proposition~\ref{p:1} yields that the cumulative
content of a pseudoword is completely determined by its projection
onto $\pseudo AR$.
We denote the factor $u_ka_k$ by $\lbf[k]u$, whenever it is defined
and we write $\lbf[\infty] u = (u_1a_1,\ldots, u_ka_k, I, I, \ldots)$
if $u_k' = I$, and $\lbf[\infty] u = (u_ka_k)_{k\ge 1}$ otherwise.
We further define the \emph{irregular} and \emph{regular} parts
of~$u$, respectively denoted $\irr u$ and $\reg u$: if $\cum u =
\emptyset$, then $\irr u = u$ and $\reg u = I$;
if $\cum u = c(u_k')$ and $k$ is minimal for this equality, then
$\irr u = \lbf[1]u \cdots \lbf[k] u$ and $\reg u = u_k'$.
This terminology is explained by the following result.
\begin{prop}
  [{\cite[Corollary 6.1.5]{drh}}] \label{c:8}
  Let $u \in \pseudo A{DRH}$. Then, $u$ is regular if and only if
  $c(u) = \cum u$ (and, hence, $\reg u = u$).
\end{prop}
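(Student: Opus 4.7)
The plan is to prove the two implications separately. The forward direction is a direct induction using the uniqueness of the left basic factorization (Proposition~\ref{p:1}); the backward direction is a compactness-based extraction of an idempotent from the infinite iteration of the left basic factorization, and it is where the bulk of the work lies.

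For the forward implication, suppose $u \in \pseudo A{DRH}$ is regular. By standard regular semigroup theory I may fix an inverse $v$ of $u$, so that $u = uvu$ and $v = vuv$ (concretely, one can take $v = xux$ whenever $u = uxu$). Set $e := vu$; then $e$ is idempotent, $ue = u$, and since $u = uvu$ forces $c(v) \subseteq c(u)$, one has $c(e) = c(vu) = c(u)$. I then prove by induction on $k$ that $u_k' = u_k' e$ for every $k \geq 0$. The base case $u_0' = u = ue$ is immediate. For the inductive step, starting from $u_k' = u_{k+1} a_{k+1} u_{k+1}'$ and multiplying on the right by $e$, one finds
\[
u_k' \;=\; u_k' e \;=\; u_{k+1} \cdot a_{k+1} \cdot (u_{k+1}' e);
\]
since $c(e) \subseteq c(u_k')$, this is a decomposition meeting the hypotheses of Proposition~\ref{p:1}, and uniqueness yields $u_{k+1}' = u_{k+1}' e$. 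It follows that $c(u_k') \supseteq c(e) = c(u)$ for every $k$, and combined with the obvious reverse inclusion this forces $c(u_{k-1}') = c(u_k a_k) = c(u)$ for every $k \geq 1$, whence $\cum u = c(u)$.

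For the backward implication, suppose $\cum u = c(u)$, which is in particular nonempty. Then the iteration of the left basic factorization does not terminate and $c(u_k') = c(u_k a_k) = c(u)$ for every $k$. The goal is to produce an idempotent $e \in \pseudo A{DRH}$ with $c(e) = c(u)$ that is $\Req$-related to $u$, from which regularity of $u$ follows. I plan to extract such an idempotent by compactness: from the identity $u = u_1 a_1 u_2 a_2 \cdots u_k a_k u_k'$, valid for every $k$, one considers a subsequence of tail products $u_{k+1} a_{k+1} \cdots u_{k+n} a_{k+n}$ accumulating in the compact semigroup $\pseudo A{DRH}$, and passes to an $\omega$-power of a limit point to obtain an idempotent $e$ with $c(e) = c(u)$. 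A careful matching of left basic factorizations via Proposition~\ref{p:4} then shows both that $ue = u$ and that $e$ lies in the right ideal generated by $u$, so $u \Req e$.

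The main obstacle is the backward direction. Producing an idempotent $e$ with $c(e) = c(u)$ satisfying $ue = u$ is essentially a routine compactness exercise, but arranging simultaneously that $e = u \cdot s$ for some $s \in (\pseudo A{DRH})^I$ — so that $u \Req e$ rather than merely $u \leq_L e$ — requires a more delicate argument that exploits the specific structure of $\DRH$, crucially the fact that its regular $\Req$-classes are group-like, being copies of elements of $\h$. The forward direction, by contrast, is essentially algebraic bookkeeping once the right inverse of $u$ has been chosen.
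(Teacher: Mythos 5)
A preliminary remark: the paper does not prove this proposition at all --- it imports it from \cite[Corollary~6.1.5]{drh} --- so your attempt has to stand as a self-contained proof. Your forward implication does: fixing an inverse $v$ of $u$, the idempotent $e = vu$ satisfies $ue = u$ and $c(e) = c(u)$, and your induction, which propagates $u_k' = u_k'e$ along the iterated left basic factorization via the uniqueness part of Proposition~\ref{p:1}, correctly forces $c(\lbf[k]{u}) = c(u)$ for all $k$, that is, $\cum u = c(u)$. That half is complete and correct.

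The backward implication, however, is the substantive half, and there you have a genuine gap which you flag yourself: you never establish that the idempotent $e$ lies in $u(\pseudo A{DRH})^I$, and without that nothing makes $u$ regular. Two secondary points first. The step you call routine is indeed easy, but not for the reason you give: for \emph{any} idempotent $e$ with $c(e) \subseteq \cum u$ one has $\rho_\h(e) = 1$ (idempotents of a profinite group are trivial), so Lemma~\ref{sec:16}, applied with $v = u$, $u_0 = e$, $v_0 = I$, already yields $ue = u$ over $\DRH$ with no compactness at all. Also, even granting both of your claims, the inference ``so $u \Req e$'' is a non sequitur as written, since $ue = u$ is an $\mathcal L$-type condition; what the two claims actually give is $u = u(us) = u^2s$, hence $u \Req u^2$, and then stability of the compact semigroup $\pseudo A{DRH}$ places $u$ in a group --- so regularity would follow, but only once $e \in u(\pseudo A{DRH})^I$ is proved, which is precisely what is missing. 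That missing step is where the paper's machinery has to enter, and it can be closed as follows: since $\cum u = c(u)$, every factor $\lbf[k]{u}$ has content $c(u)$; take an accumulation point $p$ of the partial products $(\lbf[1]{u} \cdots \lbf[k]{u})_{k \ge 1}$; iterated use of the uniqueness of first-occurrence factorizations (Proposition~\ref{p:4}, or Proposition~\ref{p:1}) shows that $p$ has the same sequence of left basic factors as $u$, so Lemma~\ref{sec:13} gives $u \Req p$ modulo $\DRH$; finally, by the fact quoted in Section~\ref{section2} from \cite[Proposition~2.1.4]{drh}, the $\Req$-class of such an accumulation point is regular (indeed a group), so $u$ is regular. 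Your plan to ``match left basic factorizations via Proposition~\ref{p:4}'' gestures toward this, but the decisive reduction --- from regularity of $u$ to the characterization of $\Req$-classes by their sequences of left basic factors, i.e.\ Lemma~\ref{sec:13} --- is absent, and it is the whole content of this direction.
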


Suppose that $\cum u \neq \emptyset$.
Since $\pseudo AS$ is a compact monoid, it follows that
the infinite product $(\lbf[1] u \cdots \lbf[k] u)_{k \ge 1}$ has an
accumulation point, and it is not hard to see that any two of its
accumulation points are $\Req$-equivalent.
Furthermore, if all the factors $\lbf[k] u$ have the same content, then
the $\Req$-class in which the accumulation points lie is
regular~\cite[Proposition 2.1.4]{drh}.
On the other hand, the regular $\Req$-classes of $\pseudo A{DRH}$ are
groups.
Hence, in this case, we may define the \emph{idempotent designated} by
the infinite product $(\lbf[1] u \cdots \lbf[k] u)_{k \ge 1}$ to be the
identity of the group to which its accumulation points belong.

Together with Lemma \ref{l:6}, the next result is behind the properties of
$\pseudo A{DRH}$ that we use most often in the sequel.
\begin{prop}
  [{\cite[Proposition 5.1.2]{drh}}]
  \label{p:3}
  Let $\V$ be a pseudovariety such that the inclusions $\h\subseteq \V
  \subseteq \DO \cap \overline \h$ hold. If $e$ is
  an idempotent of $\pseudo AV$ and if $H_e$ is its $\Heq$-class, then
  letting $\psi_e(a) = eae$ for each $a \in c(e)$ defines a unique
  homeomorphism $\psi_e:\pseudo{c(e)}H \to H_e$ whose inverse is the
  restriction of $\rho_{\V, \h}$ to $H_e$.
\end{prop}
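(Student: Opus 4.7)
The plan is to build $\psi_e$ from the universal property of $\pseudo{c(e)}H$ as the free pro-$\h$ semigroup on $c(e)$, and then to exhibit the restriction of $\rho_{\V,\h}$ as its two-sided inverse. First I would observe that $H_e$, being a closed subgroup of the profinite semigroup $\pseudo AV$ in which every finite continuous quotient is a subgroup of some $S \in \V \subseteq \overline\h$, is a pro-$\h$ group. Next, for each $a \in c(e)$ I would verify that $eae \in H_e$: content is constant on $\Deq$-classes in $\DS$-semigroups, and a standard structural property of $\DO$-semigroups guarantees that the sandwich map $u \mapsto eue$ sends elements whose content is contained in $c(e)$ into the $\Heq$-class $H_e$. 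The universal property of $\pseudo{c(e)}H$ then extends the assignment $a \mapsto eae$ uniquely to a continuous homomorphism $\psi_e : \pseudo{c(e)}H \to H_e$.

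For the inverse, I would show that $\rho_{\V,\h}|_{H_e}$ takes values in $\pseudo{c(e)}H$. Any $y \in H_e$ has content $c(y) = c(e)$ (constancy of content on $\Heq$-classes in $\DS$), so $y$ is a limit in $\pseudo AV$ of words $w_n \in c(e)^+$. Since $\rho_{\V,\h}$ is the identity on letters, $\rho_{\V,\h}(y) = \lim_n w_n$ in $\pseudo AH$, which lies in the closed subsemigroup $\pseudo{c(e)}H \hookrightarrow \pseudo AH$. Writing $\theta : H_e \to \pseudo{c(e)}H$ for this continuous corestriction, the composition $\theta \circ \psi_e$ is the identity on $\pseudo{c(e)}H$: on a generator $a$ one computes $\theta(\psi_e(a)) = \rho_{\V,\h}(eae) = \rho_{\V,\h}(e) \cdot a \cdot \rho_{\V,\h}(e) = a$, since $\rho_{\V,\h}(e)$ is an idempotent in the profinite group $\pseudo AH$ and hence its identity element. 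Continuity extends this to all of $\pseudo{c(e)}H$.

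The remaining direction $\psi_e \circ \theta = \mathrm{id}_{H_e}$ is the delicate step. For $y = \lim_n e w_n e \in H_e$ with $w_n \in c(e)^+$, one has $\psi_e(\theta(y)) = \lim_n \psi_e(w_n)$, and the task is to compare $\psi_e(w_n)$, which interleaves copies of $e$ between the letters of $w_n$, with $e w_n e$, which has a single $e$ on each side. These match in $H_e$ by the $\DO$-specific identity that, restricted to the $\Deq$-class $D_e$, the sandwich map $x \mapsto exe$ is a homomorphism into $H_e$; eventual membership of the $w_n$ in $D_e$ follows from closedness of $D_e$ together with $y \in D_e$. Establishing (or invoking) this sandwich-product identity is the main technical obstacle, and it is exactly what makes the construction rely on the full $\DO$ hypothesis rather than only on $\DS$.
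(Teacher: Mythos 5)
The paper itself offers no proof of Proposition~\ref{p:3} --- it is imported from \cite{drh} --- so your attempt has to stand on its own, and its first three quarters do. That $H_e$ is a pro-$\h$ group, that $eae\in H_e$ (this is really a $\DS$ fact: in every finite quotient the set of elements $\mathcal J$-above the image of $e$ is a subsemigroup, so $eae$ is $\mathcal J$-equivalent to $e$, and stability upgrades $\mathcal J$-equivalence plus $eae\in e\pseudo AV e$ to $\Heq$-equivalence), that the universal property of $\pseudo{c(e)}H$ yields $\psi_e$, and that $\rho_{\V,\h}\circ\psi_e=\mathrm{id}$ because the unique idempotent of the profinite group $\pseudo AH$ is its identity --- all of this is correct and is how one would expect the argument to begin.

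The final step, however, contains a genuine error, exactly at the point you yourself flagged as delicate. You claim the approximating words $w_n$ eventually lie in $D_e$ ``by closedness of $D_e$ together with $y\in D_e$''. This is backwards: closedness guarantees that limits of sequences \emph{inside} $D_e$ stay in $D_e$, not that a sequence converging to a point of $D_e$ eventually enters it --- that would require $D_e$ to be \emph{open}. It is not: $A^+$ is dense in $\pseudo AV$, yet no finite word is a regular element of $\pseudo AV$ once $\R\subseteq\V$ (the cases this paper needs, e.g.\ $\V=\DRH$); indeed, sending every letter to $a$ in the cyclic monoid $\{1,a,\dots,a^m=a^{m+1}\}\in\R\subseteq\V$ would force $a^{\card{w_n}}$ to be regular for every $m$, which fails for $m>\card{w_n}$. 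So the sandwich-homomorphism identity on $D_e$ is never applicable to the $w_n$, and the comparison of $\psi_e(w_n)$ with $ew_ne$ is left unjustified. The repair is to prove the identity $euve=(eue)(eve)$ for \emph{all} $u,v$ with content contained in $c(e)$, not merely for $u,v\in D_e$: by the $\DS$ property and stability, $eu$ lies in the $\Req$-class of $e$ and $ve$ in its $\mathcal L$-class, so both lie in $D_e$; in each finite quotient, the image of $D_e$ is a regular $\Deq$-class which is a subsemigroup, hence completely simple, and orthodox by $\DO$, hence a rectangular group $I\times G\times\Lambda$ with trivial sandwich matrix, in which $(eu)(ve)=(eu)e(ve)$ is immediate since inner coordinates do not affect the product. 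Induction on word length then gives $ewe=\psi_e(w)$ for every $w\in c(e)^+$, and continuity of $u\mapsto eue$ and of $\psi_e\circ\rho_{\V,\h}$ yields $\psi_e(\rho_{\V,\h}(y))=eye=y$ for all $y\in H_e$. This is where orthodoxy genuinely enters --- your instinct that the full $\DO$ hypothesis is needed was right --- but the identity must be established for products of elements lying \emph{outside} $D_e$, which is precisely what your argument does not do.
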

The following consequence is not hard to derive.
\begin{cor}
  \label{c:2}
  Let $u$ be a pseudoword and $v,w \in (\pseudo AS)^I$ be such that
  $c(v)\cup c(w) \subseteq \cum u$ and $v =_\h w$. Then, the
  pseudovariety $\DRH$ satisfies $uv = uw$. 
\end{cor}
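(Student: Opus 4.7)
The plan is to reduce the claimed equality $uv = uw$ in $\pseudo A{DRH}$ to one inside a single group $\Heq$-class, where Proposition~\ref{p:3} applies directly. The case $\cum u = \emptyset$ is immediate: the hypothesis $c(v) \cup c(w) \subseteq \cum u = \emptyset$ forces $v = w = I$. Otherwise, I write $u = \irr u \cdot r$ with $r = \reg u$. By Proposition~\ref{c:8}, $r$ is regular in $\pseudo A{DRH}$ and $c(r) = \cum r = \cum u$; since regular $\Req$-classes of $\pseudo A{DRH}$ are groups, $r$ belongs to a group $\Heq$-class $H_e$, where $e$ is the idempotent designated by the infinite product $(\lbf[1]r \cdots \lbf[k]r)_{k \ge 1}$ and $c(e) = c(r) = \cum u$.

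I next claim that $rv$ and $rw$ also lie in $H_e$. Because the iterated left basic factorization of $r$ is infinite (the successive tails are never empty, as $\cum r = c(r) \neq \emptyset$) and $c(v) \subseteq c(r) = c(rv)$, the uniqueness part of Proposition~\ref{p:1} gives, by induction on $k$, the equality $\lbf[k](rv) = \lbf[k]r$ for every $k \ge 1$. Thus $rv$ and $r$ produce the same infinite LBF product, so $rv$ is regular with the same designated idempotent $e$, and $rv \in H_e$. The same argument applied to $w$ gives $rw \in H_e$.

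Finally, I invoke Proposition~\ref{p:3} with $\V = \DRH$ (using the standard inclusions $\h \subseteq \DRH \subseteq \DO \cap \overline{\h}$): the restriction $\rho_{\h}|_{H_e} \colon H_e \to \pseudo{c(e)}{H}$ is a bijection, namely the inverse of the homeomorphism $\psi_e$. Since $\rho_{\h}$ is a homomorphism and $v =_{\h} w$,
\[
  \rho_{\h}(rv) = \rho_{\h}(r)\,\rho_{\h}(v) = \rho_{\h}(r)\,\rho_{\h}(w) = \rho_{\h}(rw),
\]
and injectivity on $H_e$ forces $rv = rw$ in $\pseudo A{DRH}$. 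Left-multiplying by $\irr u$ yields $uv = uw$, as desired.

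The main obstacle is the middle step, namely controlling how right-multiplication by $v$ affects the iterated LBF of $r$ and transferring equality of infinite LBF products into coincidence of designated idempotents and hence of group $\Heq$-classes. Once $rv, rw \in H_e$ is in place, the concluding computation reduces to a one-line application of Proposition~\ref{p:3}.
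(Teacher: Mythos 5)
Your proof is correct. The paper offers no proof of this corollary (it is stated only as ``not hard to derive'' from Proposition~\ref{p:3}), and your argument---splitting $u = \irr u\cdot \reg u$, using the iterated left basic factorization and Lemma~\ref{sec:13} to place $\reg u\cdot v$ and $\reg u\cdot w$ in the group $\Heq$-class $H_e$ of $\reg u$, and then applying the injectivity of the restriction of $\rho_{\DRH,\h}$ to $H_e$ granted by Proposition~\ref{p:3}---is precisely the intended derivation.
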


We proceed with the statement of two known facts about $\DRH$.
Their proofs may be found in~\cite{phd}.

\begin{lem}
  \label{sec:13}
  Let $u, v$ be pseudowords. Then, $\rho_{\DRH} (u)$ and
  $\rho_\DRH(v)$ lie in the same $\Req$-class if and only if the
  pseudovariety $\DRH$ satisfies $\lbf[\infty] u = \lbf[\infty] v$. 
\end{lem}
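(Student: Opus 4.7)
My plan is to prove the two directions separately, using the uniqueness of the left basic factorization in~$\DRH$ (Proposition~\ref{p:1}) for the forward implication and the group structure of regular $\Req$-classes of $\pseudo A{DRH}$ for the reverse.

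For the forward direction~$(\Rightarrow)$, I would assume $\rho_\DRH(u)\Req\rho_\DRH(v)$, so that $u=vx$ and $v=uy$ in~$\DRH$ for some $x,y\in(\pseudo AS)^I$. Since $\Sl\subseteq\DRH$, one has $c(u)=c(v)$. Writing $\lbf u=(u_\ell,a,u_r)$ and $\lbf v=(v_\ell,b,v_r)$, the factorization $u=v_\ell\, b\,(v_r x)$ over~$\DRH$ satisfies the defining conditions of a left basic factorization of~$u$ (namely $b\notin c(v_\ell)$ and $c(v_\ell b)=c(u)$); Proposition~\ref{p:1} then forces $u_\ell=v_\ell$, $a=b$, and $u_r=v_r x$ modulo~$\DRH$. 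Symmetrically $v_r=u_r y$ modulo~$\DRH$, so $\rho_\DRH(u_r)\Req\rho_\DRH(v_r)$. Iterating gives $\lbf[k]u=\lbf[k]v$ in~$\DRH$ for every~$k$, and the iteration terminates at the same step for~$u$ and~$v$ because the only element of $(\pseudo A{DRH})^I$ that is $\Req$-equivalent to~$I$ is~$I$ itself.

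For the reverse direction~$(\Leftarrow)$, I would split on whether the iteration is finite or infinite. In the finite case, $u$ and $v$ coincide in~$\DRH$, each being equal to the common product $\prod_j \lbf[j]u=\prod_j \lbf[j]v$, so they are $\Req$-equivalent a fortiori. In the infinite case, set $C:=\cum u=\cum v$ and $p_k:=\prod_{j=1}^k \lbf[j]u=\prod_{j=1}^k \lbf[j]v$ in~$\DRH$, so that $u=p_k u_k'$ and $v=p_k v_k'$ with $c(u_k')=c(v_k')=C$ for $k$ large. By the discussion preceding the lemma, the accumulation points of~$(p_k)_k$ in $\pseudo A{DRH}$ lie in a single regular $\Req$-class, which is a group $\Heq$-class (regular $\Req$-classes of $\pseudo A{DRH}$ being groups); let~$e$ be its identity idempotent. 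My aim is to show that both $u$ and $v$ lie in this common $\Req$-class. That $u$ belongs to the principal right ideal generated by any accumulation point~$p$ of $(p_k)_k$ follows from $u=p_k u_k'$ by extracting a convergent subnet. For the converse, exhibiting~$p$ as a right multiple of~$u$ in~$\DRH$, I would use Proposition~\ref{p:3} to identify $H_e$ with $\pseudo CH$, invert the appropriate tail factor modulo~$\h$ inside this group, and apply Corollary~\ref{c:2} to~$u$ (whose cumulative content is exactly~$C$) to upgrade the resulting $\h$-equality to a bona fide equality in~$\DRH$. The same argument for~$v$ then yields $u\Req v$ in~$\DRH$.

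The main obstacle will be the reverse direction in the infinite-iteration case, specifically writing an accumulation point~$p$ as $u\cdot w$ in~$\DRH$. The subtlety is that $\cum{p_k}=\emptyset$, so Corollary~\ref{c:2} cannot be applied to~$p_k$ directly; it must be applied to~$u$, whose cumulative content is exactly~$C=c(e)$, and combined carefully with the group description of~$H_e$ furnished by Proposition~\ref{p:3} in order to produce a suitable~$w$ that absorbs the tail of~$u$ into~$p$ on the nose in~$\DRH$.
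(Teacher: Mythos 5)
A preliminary remark: the paper states this lemma without proof, deferring it to~\cite{phd}, so your proposal can only be judged on its own merits. Your forward implication is correct (Proposition~\ref{p:1} does force the two factorizations to agree step by step, and the content argument shows both iterations terminate at the same index), and so is the finite-iteration case of the converse.

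The infinite case is where the proposal breaks down, in two places. First, a concrete error: the accumulation points of the full products $p_k=\lbf[1]{u}\cdots\lbf[k]{u}$ do \emph{not} in general lie in a regular $\Req$-class. The fact recalled in Section~\ref{section2} from \cite[Proposition~2.1.4]{drh} requires all factors to have the same content, which fails whenever $\irr u\neq I$: for $u=ab^\omega$ one has $p_k=ab^k$, and every element $\Req$-equivalent to an accumulation point has content $\{a,b\}$ but cumulative content $\{b\}$, hence is not regular by Proposition~\ref{c:8}; that class contains no idempotent, so there is no group $H_e$ and no identity $e$ to speak of. One must first split off $\irr u$ and take accumulation points of the constant-content tail products. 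Second, and more seriously, your plan for the crux step --- exhibiting $p$ as a right multiple of $u$ by applying Corollary~\ref{c:2} \emph{to $u$} --- is circular. Corollary~\ref{c:2} applied to $u$ says only that $us$ depends on $s$ through $\rho_\h(s)$ when $c(s)\subseteq C$; combined with Proposition~\ref{p:3}, the set of right multiples of $u$ reachable this way is exactly $\irr u\cdot H_e$, where $e$ is the identity of the maximal subgroup containing $\reg u$. The accumulation point $p$, however, lies in $\irr u\cdot H_{e_0}$, where $e_0$ is the idempotent designated by the infinite product, and whether $H_{e_0}=H_e$ is precisely the content of the statement being proved; no amount of inverting inside $H_e$ can decide it.

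The repair is the opposite of what you wrote. Your reason for not applying Corollary~\ref{c:2} on the right of $p$ --- that $\cum{p_k}=\emptyset$ --- conflates the finite products $p_k$ with their limit: after splitting off the irregular prefix one has $p=\irr u\cdot t$, where $t$ is an accumulation point of the constant-content tail products, so $t$ lies in a regular, hence group, $\Req$-class $H_{e_0}$ by \cite[Proposition~2.1.4]{drh}, and $\cum t=c(t)=C$ by Proposition~\ref{c:8}; thus Corollary~\ref{c:2} \emph{does} apply to $t$. Now write $u=pw$ (your compactness step, which is fine), note $c(w)=C$, and choose $z$ with $c(z)\subseteq C$ and $\rho_\h(z)=\rho_\h(w)^{-1}$. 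Since $\rho_\h(wz)$ is the identity of the profinite group $\pseudo AH$, which equals $\rho_\h(e_0)$, Corollary~\ref{c:2} applied to $t$ gives $t(wz)=te_0=t$, whence $uz=p(wz)=\irr u\,\bigl(t(wz)\bigr)=p$. This yields $u\Req p$, and the same argument applied to $v=pw_v$ (with the same $p$, since the partial products of $u$ and $v$ coincide modulo $\DRH$) yields $v\Req p$, closing the converse. So the tools you invoke are the right ones, but aimed at the wrong element: as proposed, with Corollary~\ref{c:2} pointed at $u$, the decisive step does not go through.
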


\begin{lem}
  \label{sec:16}
    Let $u,v \in \pseudo AS$ and $u_0, v_0 \in (\pseudo AS)^I$ be such
  that $c(u_0) \subseteq \cum u$ and $c(v_0) \subseteq \cum
    v$. Then, the pseudovariety $\DRH$ satisfies $uu_0 = vv_0$ if and
  only if it satisfies $u \Req v$ and if, in addition, the pseudovariety
  $\h$ satisfies $uu_0 = vv_0$. In particular, by taking $u_0 = I =
  v_0$, we get that $u =_\DRH v$ if and only if $u \Req v$ modulo $\DRH$ and $ u =_\h v$.
\end{lem}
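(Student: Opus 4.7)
Both implications rely on the following routine observation, which follows by induction on $k$: whenever $c(u_0) \subseteq \cum u$, the iterated left basic factorization of $uu_0$ satisfies $\lbf[k](uu_0) = \lbf[k] u$ for every $k \ge 1$, and consequently $\cum{uu_0} = \cum u$. Indeed, at each step the letters of $u_0$ already lie in the current tail, so the first ``new'' letter chosen by the factorization is unaffected. The same identities hold for $v$ and $v_0$. For the forward direction, projecting $uu_0 =_\DRH vv_0$ via $\rho_{\DRH,\h}$ (well-defined since $\h \subseteq \DRH$) gives $uu_0 =_\h vv_0$. Moreover $uu_0 \Req vv_0$ mod $\DRH$, so Lemma~\ref{sec:13} combined with the observation gives $\lbf[\infty] u =_\DRH \lbf[\infty] v$, whence $u \Req v$ mod $\DRH$ by Lemma~\ref{sec:13} once more.

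For the converse, assume $u \Req v$ mod $\DRH$ and $uu_0 =_\h vv_0$. Lemma~\ref{sec:13} together with the observation yields $uu_0 \Req vv_0$ mod $\DRH$. If $\cum u = \emptyset$, then $u_0 = v_0 = I$ and both iterated lbf's terminate at the same finite step; multiplying the factor-wise equalities $\lbf[k] u =_\DRH \lbf[k] v$ (from Lemma~\ref{sec:13}) gives $u =_\DRH v$, and hence $uu_0 =_\DRH vv_0$. Otherwise $\cum u \neq \emptyset$, and the plan is to reduce to Corollary~\ref{c:2}. By Proposition~\ref{c:8}, the factor $\reg u$ in the decomposition $u = \irr u \cdot \reg u$ is regular, so by Proposition~\ref{p:3} it lies in a group $\Heq$-class $H_e$ with $c(e) = \cum u$. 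The $\Req$-class of $\rho_\DRH(u)$ can be identified with $\rho_\DRH(\irr u) \cdot H_e$, which allows one to choose $x \in (\pseudo A{DRH})^I$ with $c(x) \subseteq \cum u$ and $ux =_\DRH v$; concretely, $x = \reg u^{-1} \reg v$ computed inside $H_e$, after checking that $\reg v$ belongs to the same group $H_e$. Then $vv_0 =_\DRH uxv_0$; cancelling $\rho_\h(u)$ in the profinite group $\pseudo A H$ in the identity $uu_0 =_\h uxv_0$ produces $u_0 =_\h xv_0$; and since $c(u_0) \cup c(xv_0) \subseteq \cum u$, Corollary~\ref{c:2} applied to $u$ now yields $uu_0 =_\DRH uxv_0 =_\DRH vv_0$. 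The ``in particular'' statement is the case $u_0 = v_0 = I$.

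The main technical obstacle is verifying that left multiplication by $\rho_\DRH(\irr u)$ restricts to a bijection from $H_e$ onto the $\Req$-class of $\rho_\DRH(u)$ mod $\DRH$, and hence that $\reg v$ lies in $H_e$ so that the inversion $\reg u^{-1} \reg v$ makes sense. This is a structural fact about $\pseudo A{DRH}$ that follows from Proposition~\ref{p:3} together with the standard description of (possibly irregular) $\Req$-classes in pro-$\DRH$ semigroups.
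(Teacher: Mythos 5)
A preliminary caveat: the paper never proves this lemma itself; it is quoted as a known fact whose proof is deferred to the author's thesis~\cite{phd}, so there is no in-paper argument to compare yours against line by line. Judged on its own terms, your proof is correct, and it is essentially self-contained relative to what the paper does quote: the observation $\lbf[\infty]{uu_0} = \lbf[\infty]{u}$ is sound (at each step $c(u_0) \subseteq \cum u \subseteq c(u_k')$, so appending $u_0$ does not disturb the left basic factorization), the forward direction via Lemma~\ref{sec:13} is fine, the terminating case is fine, and the reduction of the non-terminating case to Corollary~\ref{c:2} through an element $x \in H_e$ with $ux =_\DRH v$ is exactly the right idea.

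The one step you outsource --- that $\reg v$ lies in the same group $H_e$ as $\reg u$ and that $v =_\DRH \irr u \cdot \reg v$ --- can be discharged with the paper's quoted results alone, more directly than by appealing to a general description of $\Req$-classes. Since $u \Req v$ modulo $\DRH$, Lemma~\ref{sec:13} gives $\lbf[\infty] u = \lbf[\infty] v$ modulo $\DRH$. Because the content of each remainder $u_k'$ equals the content of the next factor $\lbf[k+1] u$ (part of the definition of the left basic factorization), the minimal stabilization index $k$ is the same for $u$ and $v$; hence $\irr u =_\DRH \irr v$, and the iterated left basic factorizations of $\reg u$ and $\reg v$ are the common tails $(\lbf[m] u)_{m>k} =_\DRH (\lbf[m] v)_{m>k}$. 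Applying Lemma~\ref{sec:13} once more gives $\reg u \Req \reg v$ modulo $\DRH$; both are regular by Proposition~\ref{c:8}, and the paper records that regular $\Req$-classes of $\pseudo A{DRH}$ are groups, so $\reg v \in H_e$ and $v =_\DRH \irr v \cdot \reg v =_\DRH \irr u \cdot \reg v = u\,(\reg u)^{-1}\reg v$. Moreover, by Proposition~\ref{p:3} every element of $H_e$ is a limit of products of elements $eae$ with $a \in c(e)$, so its content is contained in $c(e) = \cum u$; thus your $x = (\reg u)^{-1}\reg v$ does satisfy $c(x) \subseteq \cum u$, and note that no bijectivity of left multiplication by $\irr u$ is actually needed, only the above. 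Two cosmetic repairs: Corollary~\ref{c:2} is stated for $v,w \in (\pseudo AS)^I$, so replace $xv_0$ by $\tilde x v_0$ for some preimage $\tilde x \in \pseudo AS$ of $x$ (harmless, since content is a $\DRH$-invariant); and you implicitly use $c(v_0) \subseteq \cum u$, which requires $\cum v = \cum u$ --- again immediate from Lemma~\ref{sec:13}.
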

\section{$\DRH$-automata}\label{section3}
We start by introducing the notion of a $\DRH$-automaton.

\begin{deff}
  An \emph{$A$-labeled $\DRH$-automaton} is a tuple $$\iA = \langle V, \to,
  \tq, F, \lambda_\h, \lambda \rangle,$$ where $\langle V, \to, \tq,
  F\rangle$ is a nonempty deterministic trim automaton over
  $\Sigma$ and  $\lambda_\h: V \to (\pseudo AH)^I$ and
  $\lambda: V \to A \uplus \{\varepsilon\}$ are functions.
  We further require that $\iA$ satisfies the following conditions {\ref{a1}--\ref{a6}}.
  \begin{enumerate}[label = (A.\arabic*), leftmargin = 1.2cm]
  \item\label{a1} the set of final states is $F =
    \lambda^{-1}(\varepsilon)$ and
      $\lambda_\h(F) = \{I\}$;
  \item\label{a2} there is no outgoing transition from
    $F$;
  \item\label{a3} for every $\tv \in V \setminus F$,
    both $\tv.0$ and $\tv.1$ are defined;
  \item\label{a4} for every $\tv \in V \setminus F$, the
    equality $\lambda(\tv.\Sigma^*) = \lambda(\tv.0\Sigma^*) \uplus
    \{\lambda(\tv)\}$ holds.
  \end{enumerate}
  We observe that if conditions \ref{a1}--\ref{a4} hold for $\iA$,
  then the \emph{reduct}
  $\iA_\R = \langle V, \to, \tq, F, \lambda
  \rangle$ is an $A$-labeled $\R$-automaton (see \cite[Definition
  3.11]{palavra}). Since the cumulative content of
  a pseudoword over $\DRH$ depends only on its projection onto $\pseudo
  AR$, and hence, also its regularity, we may use the known
  results for the word problem in $\R$ (namely,
  \cite[Theorem~3.21]{palavra}) as intuition for defining the
  \emph{length}
  $\norm \iA$, the \emph{regularity index} $\regi\iA$ and the
  \emph{cumulative content} $\cum{\iA}$ of a $\DRH$-automaton
  $\iA$ from the knowledge of its reduct~$\iA_\R$. We set:
  \begin{align*}
  \norm \iA   &= \sup \{ k \ge 0 \colon \tq.1^k \text{ is defined}\};
    \\ \regi \iA   &=
         \begin{cases}
           \infty, \quad\text{ if } \norm \iA < \infty;
           \\ \min \{m \ge 0 \colon \forall k \ge m \quad
           \lambda(\tq.1^k\Sigma^*) = \lambda(\tq.1^m\Sigma^*)\},
           \quad\text{ otherwise};
         \end{cases}
    \\ \cum \iA   &=
         \begin{cases}
           \emptyset, \quad\text{ if } \norm \iA < \infty;
           \\ \lambda(\tq.1^{\regi\iA}\Sigma^*), \quad\text{ otherwise}.
         \end{cases}
  \end{align*}
  We are now able to state the further required properties
  for $\iA$:
  \begin{enumerate}[label = (A.\arabic*), leftmargin = 1.2cm]\setcounter{enumi}{4}
  \item\label{a5} if $\tv \in V \setminus F$, then
    $\lambda_\h(\tv) = I$ if and only if $\norm {\iA_{\tv.0}} <
    \infty$;
  \item\label{a6} if $\tv \in V \setminus F$ and
    $\norm{\iA_{\tv.0}} = \infty$, then $\lambda_\h(\tv) \in \pseudo
    {\cum{\iA_{\tv.0}}} H$.
  \end{enumerate}
  We say that $\iA$ is a \emph{$\DRH$-tree} if it is a $\DRH$-automaton such
  that for every $\tv \in V$ there exists a unique $\alpha \in
  \Sigma^*$ such that $\tq.\alpha = \tv$.
\end{deff}
\begin{deff}
  We say that two $\DRH$-automata $\iA_i = \langle V_i, \to_i, \tq_i,
  F_i, \lambda_{i,\h}, \lambda_i\rangle$, $i = 1,2$, are
  \emph{isomorphic}
  if there exists a bijection $f : V_1 \to V_2$ such that
  \begin{itemize}
  \item\label{i1} $f(\tq_1) = \tq_2$;
  \item\label{i2} for every $ \tv \in V_1$ and $\alpha
    \in \Sigma$, $f(\tv)\cdot\alpha = f(\tv \cdot \alpha)$;
  \item\label{i3} for every $\tv \in V_1$, the equalities
    $\lambda_{1,\h}(\tv) = \lambda_{2,\h}(f(\tv))$ and 
    $\lambda_{1}(\tv) = \lambda_2(f(\tv))$ hold.
  \end{itemize}
\end{deff}
Isomorphic $\DRH$-automata are essentially the same, up to the name of
the states. Therefore, we consider $\DRH$-automata only up to
isomorphism.

We denote the trivial $\DRH$-automaton by ${\bf 1}$ and the set of all
$A$-labeled $\DRH$-automata by $\Aa_A$.
\begin{deff}\label{d:1}
  Let $k \ge 0$ and $\iA_i = \langle V_i,\to_i, \tq_i, F_i,
  \lambda_{i,\h}, \lambda_i \rangle$, $i = 1,2$, be two
  $\DRH$-automata. We say that $\iA_1$ and $\iA_2$ are
  \emph{$k$-equivalent} if
  \begin{equation}
    \begin{aligned}
      \forall \alpha \in \Sigma^*, \: \card \alpha \le k
      \implies
      \begin{cases}
        \lambda_1(\tq_1.\alpha) = \lambda_2(\tq_2.\alpha); \\ 
        \lambda_{1,\h}(\tq_1.\alpha) = \lambda_{2,\h}(\tq_2.\alpha).
      \end{cases}
    \end{aligned} \label{eq:10}
  \end{equation}
  If $\iA_1$ and $\iA_2$ are $k$-equivalent for every $k \ge 0$, then
  we say that they are 
  \emph{equivalent}.
  We write $\iA_1 \sim_k \iA_2$ (respectively, $\iA_1 \sim \iA_2$), when $\iA_1$
  and $\iA_2$ are $k$-equivalent (respectively, equivalent).
  We further agree that \eqref{eq:10} means that either both equalities hold
  or both $\tq_1.\alpha$ and $\tq_2.\alpha$ are undefined.
\end{deff}

Observe that equivalent $\DRH$-trees are necessarily
isomorphic.

The following lemma is useful when defining a bijective
correspondence between 
the equivalence classes of $\Aa_A$ and the $\Req$-classes of $\pseudo
A \DRH$.
Although its proof is analogous to the proof of \cite[Lemma
3.16]{palavra}, we include it for the sake of completeness.
\begin{lem}\label{3.16}
  Every $\DRH$-automaton has a unique equivalent $\DRH$-tree.
\end{lem}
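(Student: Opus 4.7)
The plan is to prove existence by unfolding $\iA$ into a tree, and uniqueness by invoking the observation already made that equivalent $\DRH$-trees are isomorphic. Given $\iA = \langle V, \to, \tq, F, \lambda_\h, \lambda\rangle$, I would define $\iT = \langle W, \to', \varepsilon, F', \lambda'_\h, \lambda'\rangle$ where $W = \{\alpha \in \Sigma^* : \tq.\alpha \text{ is defined}\}$, with transitions $\alpha \xrightarrow{a} \alpha a$ whenever $\alpha a \in W$, final states $F' = \{\alpha \in W : \tq.\alpha \in F\}$, and labels $\lambda'(\alpha) = \lambda(\tq.\alpha)$, $\lambda'_\h(\alpha) = \lambda_\h(\tq.\alpha)$. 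By construction $\iT$ is a deterministic trim automaton over $\Sigma$ in which every state is reached by a unique $\Sigma$-word, so the tree condition holds.

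Next I would verify that $\iT$ is a $\DRH$-automaton, i.e.\ that \ref{a1}--\ref{a6} transfer from $\iA$. Properties \ref{a1}--\ref{a3} are pointwise checks that are immediate from the definitions of $\lambda'$, $\lambda'_\h$ and $F'$. For \ref{a4}--\ref{a6} the key observation is that for every $\alpha \in W$, the sub-tree $\iT_\alpha$ is obtained by the same unfolding recipe applied to the sub-automaton $\iA_{\tq.\alpha}$; consequently the set of $\lambda'$-labels reachable from $\alpha$ in $\iT$ coincides with $\lambda(\tq.\alpha.\Sigma^*)$, and $\norm{\iT_\alpha} = \norm{\iA_{\tq.\alpha}}$, $\regi{\iT_\alpha} = \regi{\iA_{\tq.\alpha}}$, $\cum{\iT_\alpha} = \cum{\iA_{\tq.\alpha}}$. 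Each of \ref{a4}, \ref{a5}, \ref{a6} then translates directly from the corresponding condition on $\iA$.

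Equivalence is tautological: for every $\alpha \in \Sigma^*$, either $\alpha \in W$, in which case $\lambda'(\alpha) = \lambda(\tq.\alpha)$ and $\lambda'_\h(\alpha) = \lambda_\h(\tq.\alpha)$ by definition, or $\alpha \notin W$, in which case $\tq.\alpha$ is undefined in $\iA$ and $\varepsilon.\alpha$ is undefined in $\iT$. In either case the clause of \eqref{eq:10} from Definition~\ref{d:1} is satisfied, so $\iT \sim_k \iA$ for every $k \ge 0$.

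For uniqueness I would appeal to the remark immediately preceding the lemma: equivalent $\DRH$-trees are isomorphic. If $\iT_1$ and $\iT_2$ are both $\DRH$-trees equivalent to $\iA$, then $\iT_1 \sim \iT_2$ by transitivity of $\sim$, hence they are isomorphic. The only substantive step is really the verification of \ref{a5}--\ref{a6} for $\iT$, but this is forced because $\norm{\cdot}$ and $\cum{\cdot}$ are read off the $\lambda$-labels along $1$-paths, which are copied verbatim by the unfolding; everything else is bookkeeping.
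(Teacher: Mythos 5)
Your proposal is correct and follows essentially the same route as the paper: unfold $\iA$ into the tree whose states are the words $\alpha \in \Sigma^*$ for which $\tq.\alpha$ is defined, copy the labels, and derive uniqueness from the observation that equivalent $\DRH$-trees are isomorphic. The paper dismisses the verification of \ref{a1}--\ref{a6} and of equivalence as routine, whereas you spell out the key point (each subtree $\iT_\alpha$ is the unfolding of $\iA_{\tq.\alpha}$, so $\norm{\cdot}$, $\regi{\cdot}$ and $\cum{\cdot}$ are preserved); your definitions of $F'$ and of the transitions differ only cosmetically from the paper's, being equal by conditions \ref{a1}--\ref{a3}.
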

\begin{proof}
  Take a $\DRH$-automaton $\iA = \langle V, \to, \tq, F,
  \lambda_{\h}, \lambda \rangle$ and let $\iT = \langle V', \to',
  \tq', F', \lambda_\h', \lambda'\rangle$ be the $\DRH$-tree defined
  as follows. We set
  $V' = \{\alpha \in \Sigma^* \colon \tq.\alpha \text{ is defined}\}$
  and put $\tq' = \varepsilon$. The labels of each state $\alpha \in
  V'$ are given by $\lambda_\h'(\alpha) = \lambda_\h(\tq.\alpha)$ and
  by $\lambda'(\alpha) = \lambda(\tq.\alpha)$. We also take $F' =
  \lambda'^{-1}(\varepsilon)$. Finally, the transitions in $\iT$ are
  given by $\alpha.0 = \alpha0$ and by $\alpha.1 = \alpha1$, whenever
  $\lambda'(\alpha) \neq \varepsilon$.
  It is a routine matter to check that $\iT$ is a
  $\DRH$-tree equivalent to $\iA$.
\end{proof}

Given a $\DRH$-automaton $\iA$, we denote by $\vec{\iA} = \langle \vec
V, \to, \vec \tq, \vec F, \vec \lambda_\h, \vec \lambda \rangle$ the
unique $\DRH$-tree which is equivalent to $\iA$.
Denoting both transition functions of $\iA$ and of $\vec \iA$ by $\to$
is an
abuse of notation justified by the construction made in the proof of
Lemma \ref{3.16}.
Given $0 \le i \le
\norm \iA$, we denote by $\iA_{[i]}$ the $\DRH$-subtree rooted at
$\vec \tq.1^i0$.

\begin{notacao}
  Let $u \in \pseudo A{DRH}$ and $v \in \pseudo AH$ be such that
  $c(v)\subseteq \cum u$. By
  Corollary~\ref{c:2}, the set $u \rho_{\DRH,\h}^{-1}(v)$ is a
  singleton.
  It is convenient to denote by $uv$ the unique element of
  $u\rho_{\DRH,\h}^{-1}(v)$.
  In this case, the notation $\rho_\h
  (uv)$ refers to the element $\rho_\h(uv) = \rho_\h(u) \: v$ of~$\pseudo AH$.
\end{notacao}
\begin{deff} \label{sec:22}
  Let $\iA = \langle V, \to, \tq, F, \lambda_\h, \lambda \rangle$
  be an $A$-labeled $\DRH$-automaton. The \emph{value}
  $\pi(\iA)$ of $\iA$ in $(\pseudo A {DRH})^I$ is inductively
  defined as follows:
  \begin{itemize}
  \item if $\iA = {\bf 1}$, then $\pi(\iA) = I$;
  \item otherwise, we consider two different cases according to
    whether or not $\norm{\iA} < \infty$.
    \begin{itemize}
    \item If  $\norm{\iA} < \infty$, then we set
      \begin{align*}
        \pi(\iA) = \prod_{i = 0}^{\norm{\iA} - 1} \pi(\iA_{[i]})
        \lambda_{\h}(\tq.1^i) \lambda(\tq.1^i).
      \end{align*}
    \item If  $\norm{\iA} = \infty$, then we first define the
      \emph{idempotent associated to $\iA$}, denoted $\id \iA$.
      Noticing that, for $k \ge \regi\iA$, all the elements
      $\pi(\iA_{[k]}) \lambda_{\h}(\tq.1^k) \lambda(\tq.1^k)$ have the
      same content, we let $\id \iA$ be the idempotent designated by
      the infinite product
      \begin{equation}
        (\pi(\iA_{[\regi\iA]}) \lambda_{\h}(\tq.1^{\regi\iA})
        \lambda(\tq.1^{\regi\iA}) \cdots \pi(\iA_{[k]})
        \lambda_{\h}(\tq.1^k) \lambda(\tq.1^k))_{k \ge
          \regi\iA}.\label{eq:61}
      \end{equation}
      Then, we take
      \begin{align*}
        \pi(\iA) = \left(\prod_{i = 0}^{\regi\iA - 1} \pi(\iA_{[i]})
        \lambda_{\h}(\tq.1^i) \lambda(\tq.1^i)\right) \cdot \id\iA.
      \end{align*}
    \end{itemize}
  \end{itemize}
  
  We also define the \emph{value of the irregular part of $\iA$}:
  \begin{align*}
    \piir\iA = \prod_{i = 0}^{\min\{\norm{\iA}, \regi\iA\} -
    1} \pi(\iA_{[i]}) \lambda_{\h}(\tq.1^i) \lambda(\tq.1^i).
  \end{align*}
  If $\norm{\iA} < \infty$, then we set $\id\iA = I$. Using this
  notation, we have the equality
  \begin{equation}
    \pi(\iA) = \piir\iA \cdot \id \iA.\label{eq:32}
  \end{equation}
\end{deff}

The next result is a simple observation that we state for later
reference.
\begin{lem}
  \label{l:2}
  Given a $\DRH$-automaton $\iA = \langle  V, \to, \tq, F, \lambda_\h,
  \lambda \rangle$, the following equalities hold:
  \begin{align*}
    \lbf[i+1]{\pi(\iA)} &= \pi(\iA_{[i]})
    \lambda_\h(\tq.1^i)\lambda(\tq.1^i), \text{ whenever
                          $\lbf[i+1]{\pi(\iA)}$ is defined};
    \\ \irr{\pi(\iA)} &= \piir \iA;
    \\\cum\iA &= \cum{\pi(\iA)}.
  \end{align*}
  In particular, for a certain $u \in \pseudo A{DRH}$, the elements
  $\pi(\iA)$ and $u$ are
  $\Req$-equivalent if and only if $\piir \iA = \irr u$ and
  $\id \iA \Req \reg u$.\qed
\end{lem}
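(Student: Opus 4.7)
The first equality is the crux; the other two fall out by unwinding the definitions from~\ref{sec:22}, and the $\Req$-characterization follows from Lemma~\ref{sec:13}.

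To establish the first equality, I would proceed by induction on $i$. The inductive step reduces to the base case $i = 0$ applied to the ``suffix'' $\DRH$-automaton rooted at the $1$-successor of the initial state, whose value under $\pi$ is exactly the product of the remaining factors in the defining expression for $\pi(\iA)$. For $i = 0$, one has the candidate factorization
\[
\pi(\iA) \;=\; \bigl(\pi(\iA_{[0]})\,\lambda_\h(\tq)\bigr) \cdot \lambda(\tq) \cdot w,
\]
with $w$ the product of the tail, and by Proposition~\ref{p:1} it suffices to verify $\lambda(\tq) \notin c(\pi(\iA_{[0]})\lambda_\h(\tq))$ and $c(\pi(\iA)) = c(\pi(\iA_{[0]})\lambda_\h(\tq)\lambda(\tq))$. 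The essential ingredient is the auxiliary content bound $c(\pi(\mathcal B)) \subseteq \lambda_{\mathcal B}(\tq_{\mathcal B}.\Sigma^*)$ valid for every $\DRH$-automaton $\mathcal B$, proved by a simultaneous induction on the structure of the defining product. Applied to $\mathcal B = \iA_{[0]}$ and combined with~\ref{a4}, this yields $\lambda(\tq) \notin \lambda(\tq.0.\Sigma^*) \supseteq c(\pi(\iA_{[0]}))$, while~\ref{a6} places $c(\lambda_\h(\tq))$ inside $\cum{\iA_{[0]}} \subseteq \lambda(\tq.0.\Sigma^*)$. The second condition reduces to $c(\pi(\iA)) = \lambda(\tq.\Sigma^*)$, which holds because every letter of $\lambda(\tq.\Sigma^*)$ appears explicitly as some $\lambda(\tq.1^j)$ in the defining product or else inside $\id\iA$, whose content is $\cum\iA \subseteq \lambda(\tq.\Sigma^*)$.

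The equalities $\irr{\pi(\iA)} = \piir\iA$ and $\cum\iA = \cum{\pi(\iA)}$ are then immediate: $\piir\iA$ is by definition the product of the first $\min\{\norm\iA, \regi\iA\}$ automaton-factors, which by the first equality are the first $\min\{\norm\iA, \regi\iA\}$ iterated left basic factors of $\pi(\iA)$; and $\cum{\pi(\iA)}$ is the stabilized content of $\lbf[k+1]{\pi(\iA)}$ for large~$k$, which by the first equality equals $\lambda(\tq.1^k.\Sigma^*) = \cum\iA$ once $k \ge \regi\iA$. For the final characterization, Lemma~\ref{sec:13} translates $\pi(\iA) \Req u$ modulo $\DRH$ into coincidence of the iterated left basic factorizations; splitting this coincidence at index $\regi\iA$, the ``irregular prefix'' equality becomes $\piir\iA = \irr u$, and the ``regular tail'' equality reduces to agreement of the associated group identities, i.e.\ $\id\iA \Req \reg u$ (trivially $I \Req I$ when $\norm\iA < \infty$).

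\textbf{Main obstacle.} The delicate case is $\norm\iA = \infty$, where $\id\iA$ is defined only as an accumulation point of the infinite product~\eqref{eq:61}. Verifying that the iterated left basic factorization of $\pi(\iA) = \piir\iA \cdot \id\iA$ continues beyond step~$\regi\iA$ to match the automaton factors, and in particular that $c(\id\iA) = \cum\iA$, requires continuity of the content map and of the left basic factorization, combined with Corollary~\ref{c:2} to absorb the $\lambda_\h$-labels into the appropriate $\Req$-class.
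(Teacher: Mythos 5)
Your handling of the routine parts is sound: the content computations via \ref{a4}--\ref{a6} together with Proposition~\ref{p:1} do establish the first equality for the steps inside the irregular prefix, and, once the first equality is known in full, your derivations of $\irr{\pi(\iA)} = \piir\iA$, of $\cum\iA = \cum{\pi(\iA)}$, and of the final $\Req$-characterization via Lemma~\ref{sec:13} are correct. The gap is in the inductive mechanism itself. You identify the left-basic-factorization remainder of $\pi(\iA)$ with $\pi(\iA_{\tq.1})$, ``the product of the remaining factors''. That identification is valid only for the first $\regi\iA$ steps; once the remainder is regular it is false, not merely delicate. Take $A = \{a,b\}$ and the automaton with root $\tq$, $\lambda(\tq) = b$, $\lambda_\h(\tq) = a$, $\tq.1 = \tq$, $\tq.0 = \tv$, where $\lambda(\tv) = a$, $\lambda_\h(\tv) = I$, $\tv.1 = \tv$, and $\tv.0$ terminal (all axioms \ref{a1}--\ref{a6} hold). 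Then $\pi(\iA) = (a^{\omega+1}b)^{\omega}$, whose left basic factorization is $(a^{\omega+1}, b, (a^{\omega+1}b)^{\omega-1})$, so the remainder is $(a^{\omega+1}b)^{\omega-1}$; but $\pi(\iA_{\tq.1}) = \pi(\iA) = (a^{\omega+1}b)^{\omega}$, and the two differ already over $\h = \G$, where they project to $b^{-1}a^{-1}$ and to $1$ respectively. Remainder and suffix-value are only $\Req$-equivalent, and your induction hypothesis is an equality of left basic factors of a \emph{specific} element, so it cannot be applied to an element merely $\Req$-equivalent to the one at hand --- transferring factors across $\Req$-equivalence is exactly what Lemma~\ref{sec:13} does, but invoking it presupposes knowing the factorization of both sides. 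Your ``main obstacle'' paragraph points at this zone, yet the tools you list do not close it: ``continuity of the left basic factorization'' is not a result available in the paper (it is essentially what is being proved here), and Corollary~\ref{c:2} does not upgrade $\Req$-equivalence of remainders to equality of factors.

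The workable argument avoids peeling factors off $\id\iA$ altogether. Put $f_i = \pi(\iA_{[i]})\lambda_\h(\tq.1^i)\lambda(\tq.1^i)$ and let $P$ be an accumulation point of the partial products of \eqref{eq:61}. By compactness and continuity of multiplication, for every $k \ge \regi\iA$ one may write $P = f_{\regi\iA}\cdots f_k R_k$ with $c(R_k) \subseteq \cum\iA$; your content computations plus $k$ successive applications of Proposition~\ref{p:1} then give $\lbf[i+1]{\piir\iA \cdot P} = f_i$ for all $i \le k$, hence for all $i$. Since $\id\iA$ lies by definition in the same group, hence the same $\Req$-class, as $P$, and $\Req$ is a left congruence, we get $\pi(\iA) = \piir\iA\cdot\id\iA \Req \piir\iA\cdot P$; Lemma~\ref{sec:13} now transfers the factors $f_i$ to $\pi(\iA)$, which is the first equality, after which the rest of your plan goes through verbatim. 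For comparison: the paper states this lemma with no proof at all, as a ``simple observation'', so the content it suppresses is precisely the step your proposal leaves open.
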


Since the value of a $\DRH$-automaton $\iA$ depends only on the unique
$\DRH$-tree $\vec \iA$ lying in the $\sim$-class of $\iA$, there is a
well defined map $\overline \pi: \Aa_A/{\sim} \to (\pseudo A
{DRH})^I/{\Req}$ which sends a class $\iA/{\sim}$ to the $\Req$-class
of the value of
$\vec \iA$. This map is, in effect, a bijection.
\begin{thm}\label{3.21}
  The map $\overline \pi$
  is bijective.
\end{thm}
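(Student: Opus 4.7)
The plan is to establish bijectivity by constructing an inverse to~$\overline\pi$. Concretely, for each $u \in (\pseudo A{DRH})^I$ I build a $\DRH$-tree $\iT(u)$ satisfying $\pi(\iT(u)) \Req u$ modulo $\DRH$ (giving surjectivity), and then show that $\iT$ depends only on the $\Req$-class of~$u$ and that $\iT(\pi(\iA)) = \vec\iA$ for every $\DRH$-automaton~$\iA$ (giving injectivity).

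The construction of $\iT(u)$ proceeds corecursively along the iterated left basic factorization. If $u = I$, take $\iT(u) = {\bf 1}$; otherwise, let $(u_\ell, a, u_r)$ be the LBF of~$u$, set $\lambda(\tq) = a$, and recursively take 1-subtree $\iT(u_r)$ and 0-subtree $\iT(u_\ell)$. The label $\lambda_\h(\tq)$ is then forced by requiring $\pi(\iT(u).0) \cdot \lambda_\h(\tq) =_\DRH u_\ell$: the corecursive call yields $\pi(\iT(u_\ell)) \Req u_\ell$ in $\DRH$, so Lemma~\ref{sec:16} reduces this to a single $\h$-equality admitting a unique solution $\lambda_\h(\tq) \in \pseudo{\cum{u_\ell}}H$ by cancellation in the pro-$\h$ group generated by $\cum{u_\ell}$. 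Conditions \ref{a5}--\ref{a6} then hold automatically: if $u_\ell$ is irregular, $\iT(u_\ell)$ has finite norm and $\lambda_\h(\tq) = I$, while if $u_\ell$ is regular, $\lambda_\h(\tq)$ lies in the subalgebra prescribed by~\ref{a6}. Applying Lemma~\ref{sec:13} then confirms $\pi(\iT(u)) \Req u$.

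For the inverse relationship, given $u_1 \Req u_2$ modulo $\DRH$, Lemma~\ref{sec:13} combined with the uniqueness of LBF (Proposition~\ref{p:1}) forces $a_i^{(1)} = a_i^{(2)}$ and $u_i^{(1)} = u_i^{(2)}$ in $\pseudo A{DRH}$ at every level~$i$, so the construction of $\iT$ yields $\iT(u_1) = \iT(u_2)$ corecursively. Conversely, given a $\DRH$-tree~$\iA$ with $u = \pi(\iA)$, Lemma~\ref{l:2} identifies $\lbf[\infty] u$ with the sequence $(\pi(\iA_{[i]}) \lambda_\h(\tq.1^i) \lambda(\tq.1^i))_{i \ge 0}$; unwinding the definition of $\iT(u)$ and using that right-multiplication by $\lambda_\h(\tq.1^i)$ preserves the $\Req$-class of $\pi(\iA_{[i]})$ (via Proposition~\ref{p:3}, since that label lies in the local group) yields $\iT(u) = \iA$. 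Together these show that $[u]_\Req \mapsto [\iT(u)]_\sim$ is inverse to~$\overline\pi$.

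The main obstacle is the corecursive construction on regular inputs. One must verify that $\cum{\iT(u_\ell)}$ coincides with $\cum{u_\ell}$, so that the correcting label~$\lambda_\h(\tq)$ indeed lies in the subalgebra prescribed by~\ref{a6}, and that the projection to~$\h$ interacts coherently with the recursion, both of which ultimately rest on Corollary~\ref{c:2} and Proposition~\ref{p:3}.
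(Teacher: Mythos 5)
Your skeleton---build $\iT(u)$ by iterating the left basic factorization, and choose each spine label so that $\pi(\iT(u_\ell))\lambda_\h(\tq) =_\DRH u_\ell$---is essentially the paper's own proof: there the label is written down explicitly as $\rho_\h(\reg{u_\ell})$, surjectivity is the same spine construction, and injectivity is the same cancellation argument, merely organized as a comparison of two automata by induction on $\card{c(\cdot)}$ rather than as the verification $\iT(\pi(\iA)) = \vec\iA$. However, your handling of the labels contains a genuine error. The dichotomy ``$u_\ell$ irregular $\Rightarrow$ $\iT(u_\ell)$ has finite norm and $\lambda_\h(\tq) = I$; $u_\ell$ regular $\Rightarrow$ $\lambda_\h(\tq)$ lies in the subalgebra of \ref{a6}'' omits the case $\emptyset \neq \cum{u_\ell} \subsetneqq c(u_\ell)$, in which $u_\ell$ is irregular (Proposition~\ref{c:8}) yet $\iT(u_\ell)$ has \emph{infinite} norm. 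Concretely, take $u = ab^\omega c$, so that $u_\ell = ab^\omega$: your rule forces $\lambda_\h(\tq) = I$, which violates \ref{a5}, and the resulting tree has value $\pi(\iT(u_\ell))\cdot I\cdot c = ab\,b^\omega c = ab^{\omega+1}c$, which for nontrivial $\h$ is \emph{not} $\Req$-equivalent to $u$ over $\DRH$, since the first left basic factors $ab^{\omega+1}c$ and $ab^\omega c$ differ over $\h$ (Lemmas~\ref{sec:13} and~\ref{sec:16}). The dichotomy relevant to \ref{a5}--\ref{a6} is $\cum{u_\ell} = \emptyset$ versus $\cum{u_\ell} \neq \emptyset$, and in the latter case the label must be $\rho_\h(\reg{u_\ell})$ whether or not $u_\ell$ is regular.

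Second, even in the cases you do treat, ``cancellation'' does not justify the key claim. Cancellation in the profinite group $\pseudo AH$ gives \emph{uniqueness} of $g$ satisfying $\rho_\h(\pi(\iT(u_\ell)))\,g = \rho_\h(u_\ell)$, but both sides of this equation have content $c(u_\ell)$, not $\cum{u_\ell}$, so nothing in that step places the solution inside $\pseudo{\cum{u_\ell}}H$; and your closing remark misidentifies the obstacle, because the equality $\cum{\iT(u_\ell)} = \cum{u_\ell}$ (immediate from Lemma~\ref{l:2} once $\pi(\iT(u_\ell)) \Req u_\ell$) does not by itself constrain $g$. What is actually needed is the computation the paper performs at~\eqref{eq:18}: by Lemma~\ref{l:2} one has $\piir{\iT(u_\ell)} = \irr{u_\ell}$, and the idempotent $\id{\iT(u_\ell)}$ projects to the identity of the group $\pseudo AH$, whence $\rho_\h(\pi(\iT(u_\ell)))\,\rho_\h(\reg{u_\ell}) = \rho_\h(\irr{u_\ell})\,\rho_\h(\reg{u_\ell}) = \rho_\h(u_\ell)$; therefore the unique solution is $g = \rho_\h(\reg{u_\ell})$, which lies in $\pseudo{\cum{u_\ell}}H$ because $c(\reg{u_\ell}) = \cum{u_\ell}$. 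With these two repairs---plus an explicit induction on $\card{c(u)}$ to make your ``corecursion'' well founded, since the $\lambda_\h$-label at a node requires the value of its entire $0$-subtree---your argument closes and coincides with the paper's.
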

\begin{proof}
  To prove that $\bpi$ is injective, we consider two $\DRH$-automata
  $\iA = \langle V, \to, \tq, F, \lambda_{\h}, \lambda
  \rangle$ and $\iA' = \langle V', \to', \tq', F',
  \lambda'_{\h}, \lambda' \rangle$ such that $\pi(\iA) \Req \pi(\iA')$
  and we argue by induction on $\card{c(\pi(\iA))} =
  \card{c(\pi(\iA'))}$.

  If $\card{c(\pi(\iA))}  = 0$, then $\iA = {\bf 1} = \iA'$ and there is nothing
  to prove.
  
  Suppose that $ \card{c(\pi(\iA))} > 0$. We claim that $\iA_{[i]} =
  \iA'_{[i]}$ for all $ 0 \le i \le \norm{\iA} -1$.
  Indeed, by Lemma~\ref{sec:13}, the values $\pi(\iA)$ and $\pi(\iA')$ lie
  in the same $\Req$-class if and only if $\lbf[\infty]{\pi(\iA)} =
  \lbf[\infty]{\pi(\iA')}$.
  But, by Lemma \ref{l:2}, the equalities
  \begin{align*}
    \lbf[i+1]{\pi(\iA)} &= \pi(\iA_{[i]})
                          \lambda_\h(\tq.1^i)\lambda(\tq.1^i)
    \\ \lbf[i+1]{\pi(\iA')} & = \pi(\iA'_{[i]})
  \lambda'_\h(\tq'.1^i)\lambda'(\tq'.1^i)
  \end{align*}
  hold, whenever the first members are defined.
  Hence, we get the following:
  \begin{align}\label{eq:56}
   \norm{\iA}  &= \norm{\iA'},\nonumber
    \\ \pi(\iA_{[i]}) \lambda_{\h}(\tq.1^i)  &= \pi(\iA'_{[i]})
         \lambda'_{\h}(\tq'.1^i), \text{ for } 0 \le i \le \norm{\iA}-1,
    \\ \lambda(\tq.1^i)  &=  \lambda'(\tq'.1^i), \text{ for } 0 \le
         i \le \norm{\iA}-1.\nonumber
  \end{align}
  Since, by \ref{a6}, the inclusions $c(\lambda_{\h}(\tq.1^i)) \subseteq
  \vec{c}( \pi(\iA_{[i]}))$ and $c(\lambda'_{\h}(\tq'.1^i))
  \subseteq \vec{c}( \pi(\iA'_{[i]}))$ hold, we also have $
  \pi(\iA_{[i]}) \Req \pi(\iA'_{[i]})$. By induction hypothesis,
  that implies $\iA_{[i]} = \iA'_{[i]}$ (recall that $\iA_{[i]}$ and
  $\iA'_{[i]}$ are both $\DRH$-trees, and each equivalence class has
  a unique $\DRH$-tree).
  
  To conclude that $\bpi$ is injective, it remains to show that, for
  $0 \le i \le \norm{\iA} - 1$, the labels  $\lambda_{\h}(\tq.1^i)$
  and $\lambda'_{\h}(\tq'.1^i)$ coincide.
  When $\vec{c}(\iA_{[i]}) = \emptyset = \vec{c}(\iA'_{[i]})$,
  Property \ref{a6} guarantees that $\lambda_{\h}(\tq.1^i) = I =
  \lambda'_{\h}(\tq'.1^i)$.
  Otherwise, we have
  \begin{align*}
    \piir {\iA_{[i]}} \id{\iA_{[i]}} \lambda_{\h}(\tq.1^i) & =
    \pi(\iA_{[i]}) \lambda_{\h}(\tq.1^i)
    \just ={\eqref{eq:56}}
  \pi(\iA'_{[i]}) \lambda'_{\h}(\tq'.1^i)  \\ &= \piir{\iA'_{[i]}}
  \id{\iA'_{[i]}} \lambda'_{\h}(\tq'.1^i),
  \end{align*}
  which in turn implies that
  $$\id{\iA_{[i]}} \lambda_{\h}(\tq.1^i) = \id{\iA'_{[i]}}
  \lambda'_{\h}(\tq'.1^i).$$
  Since $\rho_\h(\id{\iA_{[i]}})$ and $\rho_\h(\id{\iA'_{[i]}})$ are both the
  identity of $\pseudo AH$ we obtain the equality $\lambda_{\h}(\tq.1^i) =
  \lambda'_{\h}(\tq'.1^i)$.
  
  Let us prove that $\bpi$ is surjective. We proceed again by
  induction, this time on $\card{c(w)}$, for $w \in (\pseudo A
  {DRH})^I$.
  
  If $c(w)$ is the empty set, then we have $[w]_{\Req} = \{I\} =
  \{\pi({\bf 1})\} = \bpi({\bf 1}/_{\sim})$.
  
  If $w \neq I$, then we let $w = w_0a_0 \cdots w_ka_kw'_k$ be the $k$-th
  iteration of the left basic factorization of $w$ (whenever it is
  defined). For each $0 \le i \le \leng{w} - 1$, we have $c(w_i)
  \subsetneqq c(w)$ and so, by induction hypothesis, there exists a
  $\DRH$-tree $\iA_i = \langle V_i, \to_i, \tq_i, F_i,
  \lambda_{i,\h}, \lambda_i \rangle$ such that $\pi(\iA_i) \Req
  w_i$. In particular, the equality $ \piir {\iA_i} = \irr {w_i}$
  holds and consequently, $\h$ satisfies
  \begin{align}
    \pi(\iA_i) \cdot \reg{w_i} = \piir{\iA_i} \cdot \id
    {\iA_i} \cdot \reg{w_i} = \irr{w_i}\cdot 1 \cdot \reg{w_i}  =
    w_i.\label{eq:18}
  \end{align}
  On the other hand, since
  $c(\reg{w_i}) = \cum{\id{\iA_i}}$,
  we deduce that $\id{\iA_i} \cdot \reg{w_i}$ is $\Req$-equivalent
  to $\id{\iA_i}$. Consequently, the pseudowords $w_i$ and $\pi(\iA_i)
  \cdot \reg{w_i}$ are $\Req$-equivalent as well. This relation
  together with~\eqref{eq:18} imply, by Lemma~\ref{sec:16}, that the
  equality 
  $\pi(\iA_i) \cdot \reg{w_i} = w_i $ holds.
  
  Now, we construct a $\DRH$-tree $\iA = \langle V, \to,
  \tq, F, \lambda_{\h}, \lambda \rangle $ as follows:
  \begin{itemize}
  \item $ V =
    \begin{cases}
      \{ \tv \in V_i \colon i \ge 0\} \uplus \{\tv_i\}_{i \ge 0}, \quad
      \text{ if $\leng{w} = \infty;$}
      \\ \{ \tv \in V_i \colon i = 0, \ldots, \leng w - 1\} \uplus
      \{\tv_i\}_{i = 0}^{\leng{w} - 1}\uplus
      \{\tv_{\varepsilon}\}, \;\text{ if }
      \leng{w} < \infty;
    \end{cases}$
  \item $\tq = \tv_0$;
  \item $ F =\begin{cases}
      \{ \tv \in F_i\colon i \ge 0\}, \quad \text{ if } \leng{w} = \infty;
      \\
      \{ \tv \in F_i \colon i = 0, \ldots, \leng w -1\} \uplus
      \{\tv_{\varepsilon}\}, \quad \text{ if } \leng{w} < \infty;
    \end{cases}
    $
  \item $\lambda_{\h}(\tv_i) = \rho_{\h}(\reg{w_i})$ and
    $\lambda(\tv_i) = a_i$ for $i = 0, \ldots,  \leng w-1$;
  \item $\lambda(\tv_{\varepsilon}) = \varepsilon$, if $\leng{w}$ is
    finite;
  \item $\tv_i.0 = \tq_i $ and $ \tv_i.1 =
    \begin{cases}
      \tv_{i+1}, & \text{ if } i < \leng{w} - 1; \\
      \tv_{\varepsilon}, & \text{ if } i = \leng{w} - 1;
    \end{cases}
    $
  \item transitions and labelings on $V_i$ are given by those of
    $\iA_i$.
  \end{itemize}
  Then it is easy to check that $\iA$ is a $\DRH$-tree and that
  $\overline\pi(\iA/{\sim}) = [w]_{\Req}$.
\end{proof}

Suppose that we are given two $\DRH$-automata $\iA_i = \langle V_i,
\to_i, \tq_i, F_i, \lambda_{i,\h}, \lambda_i \rangle$, $i =
0,1$, a letter $a \in A$ such that  $\lambda(V_1) \subseteq
\lambda(V_0) \uplus \{a\}$ and a pseudoword $u$ such that $c(u)
\subseteq \vec{c}(\iA_0)$. Then, we denote by $(\iA_0, u \mid a,
\iA_1)$ the $\DRH$-automaton $\iA = \langle V, \to, \tq, F,
\lambda_{\h}, \lambda \rangle$, where
\begin{itemize}
\item $V = V_0 \uplus V_1 \uplus \{\tq\}$;
\item $\tq.0 = \tq_0$ and $\tq.1 = \tq_1$;
\item $ F = F_0 \uplus F_1$;
\item $\lambda_{\h} (\tq) = \rho_\h(u)$ and $\lambda(\tq) = a$;
\item all the other transitions and labels are given by those of
  $\iA_0$ and $\iA_1$.
\end{itemize}
Given an element $w$ of $(\pseudo AS)^I$, we denote by $\iT(w)$ the $\DRH$-tree
representing the $\sim$-class $\overline\pi^{\:-1}([\rho_{\DRH}(w)]_{\Req})$.
With a little abuse of notation, when $w \in (\pseudo A{DRH})^I$, we
use $\iT(w)$ to denote the unique
$\DRH$-tree in the $\sim$-class $\overline\pi^{\:-1}([w]_{\Req})$.
Later, we shall see that, for every $\kappa$-word $w$, there
exists a finite $\DRH$-automaton $\iA$ in the $\sim$-class of $\iT(w)$ (Corollary~\ref{c:14}).
\begin{lem}\label{arv}
  Let $w$ be a pseudoword and write $\lbf w = (w_\ell, a, w_r)$. Then,
  we have the equality $\iT(w)
  = (\iT(w_\ell), \reg{w_\ell} \mid a, \iT(w_r))$.
\end{lem}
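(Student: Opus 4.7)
Set $\iA := (\iT(w_\ell), \reg{w_\ell} \mid a, \iT(w_r))$. By Lemma \ref{3.16} and Theorem \ref{3.21}, it is enough to show that $\iA$ is a well-defined $\DRH$-tree satisfying $\pi(\iA) \Req w$ modulo $\DRH$: uniqueness of the $\DRH$-tree in each $\sim$-class will then force $\iA = \iT(w)$.

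The first step is to verify the two hypotheses built into the construction $(\iA_0, u \mid a, \iA_1)$: that $c(\reg{w_\ell}) \subseteq \cum{\iT(w_\ell)}$, and that $\lambda(V_{\iT(w_r)}) \subseteq \lambda(V_{\iT(w_\ell)}) \uplus \{a\}$. For the first, Lemma \ref{l:2} applied to $\iT(w_\ell)$ gives $\cum{\iT(w_\ell)} = \cum{\pi(\iT(w_\ell))} = \cum{w_\ell}$ (since $\pi(\iT(w_\ell)) \Req w_\ell$), and this equals $c(\reg{w_\ell})$ by definition of the regular part. For the second, I would prove by induction on $\card{c(u)}$, following the recursive construction in the surjectivity proof of Theorem \ref{3.21}, that the non-$\varepsilon$ labels of $\iT(u)$ are exactly $c(u)$; the inclusion then amounts to $c(w_r) \subseteq c(w_\ell a) = c(w)$, which is clear from $w = w_\ell a w_r$ and the left basic factorization property. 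That $\iA$ itself is a tree is immediate, since we are gluing two trees via a fresh root.

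What remains is to prove $\pi(\iA) \Req w$ modulo $\DRH$, which by Lemma \ref{sec:13} reduces to $\lbf[\infty]{\pi(\iA)} = \lbf[\infty]{w}$ in $\DRH$. I would compute both sides coordinate by coordinate using Lemma \ref{l:2}. For $i = 0$, the definition of $\iA$ yields $\iA_{[0]} = \iT(w_\ell)$, $\lambda_\h(\tq) = \rho_\h(\reg{w_\ell})$, and $\lambda(\tq) = a$, so $\lbf[1]{\pi(\iA)} = \pi(\iT(w_\ell))\,\rho_\h(\reg{w_\ell})\,a$. Choosing $\rho_\DRH(\reg{w_\ell})$ as a representative in $\rho_{\DRH,\h}^{-1}(\rho_\h(\reg{w_\ell}))$ and recalling the identity $\pi(\iT(w_\ell)) \cdot \reg{w_\ell} = w_\ell$ in $\DRH$ established inside the surjectivity argument of Theorem \ref{3.21} (via equation \eqref{eq:18} together with Lemma \ref{sec:16}), this evaluates to $w_\ell a = \lbf[1]{w}$. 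For $i \ge 1$, the definition of $\iA$ gives $\iA_{[i]} = \iT(w_r)_{[i-1]}$ with matching labels, so $\lbf[i+1]{\pi(\iA)} = \lbf[i]{\pi(\iT(w_r))}$, which equals $\lbf[i]{w_r} = \lbf[i+1]{w}$ thanks to $\pi(\iT(w_r)) \Req w_r$ and Lemma \ref{sec:13}; the equality $\norm{\iA} = 1 + \norm{\iT(w_r)}$ guarantees that the two sequences terminate simultaneously.

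The main obstacle I anticipate is the careful handling of the notation $uv$ for $u \in \pseudo A{DRH}$ and $v \in \pseudo AH$: the content condition $c(v) \subseteq \cum u$ must be checked at each appearance for the expression to denote an element of $\pseudo A{DRH}$, and the bridge between $\pi(\iT(w_\ell))\,\rho_\h(\reg{w_\ell})$ and the concrete product $\pi(\iT(w_\ell)) \cdot \rho_\DRH(\reg{w_\ell}) = w_\ell$ needs to be made explicit via Lemma \ref{sec:16}. Once this is in place for the first factor, the remaining coordinates of $\lbf[\infty]{\pi(\iA)}$ follow by routine bookkeeping on iterated left basic factorization.
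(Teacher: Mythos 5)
Your proposal is correct, and its skeleton (reduce, via Lemma \ref{3.16} and Theorem \ref{3.21}, to showing that the glued tree $\iA$ satisfies $\pi(\iA) \Req w$ modulo $\DRH$) is exactly the paper's reduction; but the way you establish $\pi(\iA)\Req w$ is genuinely different. The paper computes $\pi(\iT')$ explicitly from Definition \ref{sec:22}, splitting into the cases $\norm{\iT'}<\infty$ and $\norm{\iT'}=\infty$ (and, within the latter, whether $\rho_\DRH(w)$ is regular), and in the infinite case it must track the idempotent, proving $\id{\iT'}=\id{\iT(w_r)}$ via the subtree identification $\iT'_{[i]}=\iT(w_r)_{[i-1]}$ (its equation \eqref{eq:33}, which you also use); it then concludes $\pi(\iT')\Req \pi(\iT(w_\ell))\rho_\h(\reg{w_\ell})\,a\cdot\pi(\iT(w_r))\Req w$. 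You instead never compute $\pi(\iA)$ at all: you compare $\lbf[\infty]{\pi(\iA)}$ with $\lbf[\infty]{w}$ coordinate by coordinate, invoking Lemma \ref{sec:13} twice --- once as the criterion for the goal, and once to import $\lbf[\infty]{\pi(\iT(w_r))}=\lbf[\infty]{w_r}$ for all coordinates past the first. This absorbs the paper's entire finite/infinite and regular/irregular case analysis, together with the idempotent bookkeeping, into Lemma \ref{sec:13}, at the cost of losing the explicit value formula (the paper's \eqref{eq:34}) that the direct computation produces. Both arguments hinge on the same key identity $w_\ell=\pi(\iT(w_\ell))\rho_\h(\reg{w_\ell})$; the paper re-derives it inline in three lines from Lemmas \ref{l:2} and \ref{sec:16}, whereas you cite it from inside the surjectivity half of Theorem \ref{3.21} --- that is legitimate (the argument there applies verbatim to any $\DRH$-tree whose value is $\Req$-equivalent to the given pseudoword, and no circularity arises since Theorem \ref{3.21} precedes this lemma), but re-deriving it would be cleaner than referencing an intermediate step of another proof. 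Finally, your verification that the gluing construction $(\iT(w_\ell), \reg{w_\ell}\mid a, \iT(w_r))$ is actually licensed --- the content condition $c(\reg{w_\ell})\subseteq\cum{\iT(w_\ell)}$ and the label condition reducing to $c(w_r)\subseteq c(w_\ell a)$ with $a\notin c(w_\ell)$ --- is a point in your favor: the paper leaves these hypotheses implicit.
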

\begin{proof}
  Write
  \begin{align*}
    \iT ' &= (\iT(w_\ell), \reg{w_\ell} \mid a, \iT(w_r)) = \langle V,
    \to, \tq, F, \lambda_{\h}, \lambda \rangle;
    \\ \iT(w_\ell) &= \langle V_0,
                     \to_0, \tq_0, F_0, \lambda_{0,\h}, \lambda_0 \rangle;
    \\ \iT(w_r) &= \langle V_1,
  \to_1, \tq_1, F_1, \lambda_{1,\h}, \lambda_1 \rangle.
  \end{align*}
  The claim
  amounts to proving that $\pi (\iT') \Req w$ modulo
  $\DRH$. By definition of $\iT'$, we have $\norm {\iT'} <
  \infty$ if and only if $\norm {\iT(w_r)} < \infty$.
  We start by proving that $\pi(\iT')$ and $\pi(\iT(w_\ell))
  \rho_\h(\reg{w_\ell}) a \cdot \pi( \iT(w_r))$ belong to the same $\Req$-class.
  It is worth noticing that, for every $1 \le i \le \norm{\iT'}$, we
  have the following equality:
  \begin{equation}
    \label{eq:33}
    \iT'_{[i]} = \iT_{\tq.1^i0}' = \iT(w_r)_{\tq_1.1^{i-1}0} =
    \iT(w_r)_{[i-1]}.
  \end{equation}
  
  First, assume that $\norm {\iT'} < \infty$.
  Then, we have $\norm {\iT'} = \norm {\iT(w_r)} + 1$.
  Following Definition~\ref{sec:22} and the construction of $\iT'$, we may
  compute
  \begin{align}
    \pi(\iT') & = \prod_{i = 0}^{\norm {\iT(w_r)}} \pi(\iT'_{[i]})
                \lambda_\h(\tq.1^i)\lambda(\tq.1^i)\nonumber
    \\ & = \pi(\iT'_{\tq.0}) \lambda_\h(\tq)\lambda(\tq) \cdot
         \prod_{i = 0}^{\norm {\iT(w_r)}-1}
         \pi(\iT'_{[i+1]})
         \lambda_\h(\tq.1^{i+1})\lambda(\tq.1^{i+1})\nonumber
    \\ &\kern-3pt \just={\eqref{eq:33}} \pi(\iT(w_\ell))
         \rho_\h(\reg{w_\ell}) a \cdot \pi(\iT(w_r)).\label{eq:34}
  \end{align}

  Now, we suppose that $\norm{\iT'} = \infty$. In that case,
  $\regi{\iT'}$ is either $\regi{\iT(w_r)}$ or $\regi{\iT(w_r)} +
  1$ according to whether $\rho_\DRH(w)$ is regular (in which case, it
  is $0$) or not, respectively. Suppose that $\rho_\DRH(w)$ is not
  regular. We compute
  \begin{align*}
    \pi(\iT') & = \prod_{i = 0}^{\regi {\iT(w_r)}} \pi(\iT'_{[i]})
                \lambda_\h(\tq.1^i)\lambda(\tq.1^i) \cdot \id{\iT'}
    \\ &= \pi(\iT'_{\tq.0})
         \lambda_\h(\tq)\lambda(\tq)
         \cdot \left(\prod_{i = 0}^{\regi {\iT(w_r)}-1} \pi(\iT'_{[i+1]})
         \lambda_\h(\tq.1^{i+1})\lambda(\tq.1^{i+1})\right) \cdot
         \id{\iT'}
    \\ & \kern-3pt\just={\eqref{eq:33}}\pi(\iT(w_\ell))
         \rho_\h(\reg{w_\ell})a
         \cdot \piir{\iT(w_r)}
         \cdot \id{\iT'}.
  \end{align*}
  Now, $\id{\iT'}$ is the idempotent designated by the infinite
  product
  $$(\pi(\iT'_{[\regi{\iT'}]})\lambda_\h(\tq.1^{\regi{\iT'}})\lambda(\tq.1^{\regi{\iT'}})\cdots
  \pi(\iT'_{k})\lambda_\h(\tq.1^{k})\lambda(\tq.1^{k}))_{k \ge
    \regi{\iT'}}.$$
  Hence,  by \eqref{eq:33}, we have $\id{\iT'} = \id{\iT(w_r)}$, and so, the equality \eqref{eq:34}
  yields
  \begin{equation}
    \label{eq:57}
    \pi(\iT')  \Req \pi(\iT(w_\ell)) \rho_\h(w_\ell)a \cdot
    \pi(\iT(w_r)).
  \end{equation}

  Now, we need to establish the equality $w_\ell = \pi(\iT(w_\ell))
  \rho_\h(\reg{w_\ell})$. But, using Lemma~\ref{sec:16}, that is
  immediate, since $w_\ell \Req \pi(\iT(w_\ell))
  \reg{\w_\ell}$ modulo $\DRH$ and, by Lemma \ref{l:2},
  $\h$ satisfies $\pi(\iT(w_\ell))\rho_\h(\reg{w_\ell}) =
  \irr{w_\ell}\cdot \id{\iT(w_\ell)} \cdot \reg{w_\ell} = w_\ell$.
  Hence, it follows from \eqref{eq:57} that $w =
  w_\ell \cdot a\cdot w_r \Req \pi(\iT')$, as intended.
  
  The case where $\rho_\DRH(w)$ is regular is handled similarly.
\end{proof}

The \emph{value of a path} $\tq_0 \xrightarrow{\alpha_0} \tq_1
\xrightarrow{\alpha_1} \cdots \xrightarrow{\alpha_n}
\tq_{n+1}$ in a $\DRH$-automaton $\iA$ is given by
$\prod_{i=0}^n \left(\alpha_i, \lambda_{\h, \alpha_i}(\tq_i),
  \lambda(\tq_i)\right) \in \left(\Sigma \times (\pseudo AH)^I \times
  A\right)^+$, where $\lambda_{\h, \alpha_i}(\tq_i)
=\lambda_{\h}(\tq_i)$ if $ \alpha_i = 0$, and  $\lambda_{\h, \alpha_i}(\tq_i)
=I$ otherwise.
Given a state $\tv$ of $\iA$, the
\emph{language associated to $\tv$} is the set $\iL(\tv)$ of all
values of successful paths in
$\iA_{\tv}$. The \emph{language} associated to $\iA$, denoted
$\iL(\iA)$, is the language associated to its root. Finally, the \emph{language
  associated to the pseudoword $w$} is $\iL(w) = \iL(\iT(w))$.

\begin{lem} \label{3.23}
  Let $\iA_1$, $\iA_2$ be $\DRH$-automata. Then, the languages
  $\iL(\iA_1)$ and $\iL(\iA_2)$ coincide if and only if the $\DRH$-trees
  $\vec{\iA}_1$ and $\vec{\iA}_2$ are the same.
\end{lem}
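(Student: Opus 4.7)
The easy direction is immediate: if $\vec{\iA}_1 = \vec{\iA}_2$, then since $\iA_i \sim \vec{\iA}_i$ by Lemma~\ref{3.16}, the labelings along any common access word agree, so the values of successful paths in $\iA_i$ and in $\vec{\iA}_i$ coincide, giving $\iL(\iA_i) = \iL(\vec{\iA}_i)$ and hence $\iL(\iA_1) = \iL(\iA_2)$.

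For the converse, after the same reduction one has $\iL(\vec{\iA}_1) = \iL(\vec{\iA}_2)$, so the plan is to show that the language of a $\DRH$-tree determines it completely. Since a $\DRH$-tree has its state set indexed by a subset of $\Sigma^*$, it suffices to determine, for each $\alpha \in \Sigma^*$, whether $\tq.\alpha$ is a state of $\vec{\iA}$ and, if so, the labels $\lambda_\h(\tq.\alpha)$ and $\lambda(\tq.\alpha)$. I claim that $\tq.\alpha$ is a state of $\vec{\iA}$ if and only if some $v \in \iL(\vec{\iA})$ has first-coordinate sequence of the form $\alpha\beta$ for some $\beta \in \Sigma^*$: determinism gives one direction, while the other uses that either $\tq.\alpha \in F$ (so the path from the root is already successful, yielding $\beta = \varepsilon$) or $\tq.\alpha \notin F$, in which case condition \ref{a3} together with trimness (built into the definition of a $\DRH$-automaton) produces a successful extension. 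Moreover, $\tq.\alpha \in F$ iff some $v \in \iL(\vec{\iA})$ has first-coordinate sequence exactly $\alpha$, using \ref{a2}.

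To read off labels at a state $\tq.\alpha$ of both trees: if $\tq.\alpha \in F$, then \ref{a1} forces $(\lambda_\h(\tq.\alpha), \lambda(\tq.\alpha)) = (I, \varepsilon)$. Otherwise, \ref{a3} ensures the transition $\tq.\alpha \xrightarrow{0} \tq.\alpha 0$ exists, and trimness extends this to a successful path whose value has the triple $(0, \lambda_\h(\tq.\alpha), \lambda(\tq.\alpha))$ at position $|\alpha|$; these labels are therefore recoverable from any such $v$. Applying this recipe to both $\vec{\iA}_1$ and $\vec{\iA}_2$ and using $\iL(\vec{\iA}_1) = \iL(\vec{\iA}_2)$ forces identical state sets and identical labels, hence $\vec{\iA}_1 = \vec{\iA}_2$. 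The main subtlety I expect is the clean separation between final and non-final states and ensuring the language encodes both kinds of data uniformly — which is precisely what axioms \ref{a1}--\ref{a3} combined with trimness provide.
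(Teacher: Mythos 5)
Your proof is correct and follows essentially the same route as the paper's: both recover the state set from the first-coordinate prefixes of language elements, force the labels at terminal states via \ref{a1}, and read off $(\lambda_\h,\lambda)$ at non-terminal states by extending through the $0$-transition (using \ref{a3} and trimness), which is exactly the trick needed since the value of a path only records $\lambda_\h$ on $0$-transitions. The only cosmetic difference is that you reduce to the trees $\vec{\iA}_i$ first and reconstruct a tree from its language, whereas the paper establishes $\iA_1 \sim \iA_2$ directly and then invokes the uniqueness of the equivalent tree (Lemma~\ref{3.16}).
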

\begin{proof}
  Recall that, by Lemma \ref{3.16}, if $\vec\iA_1 = \vec\iA_2$, then
  $\iA_1$ and $\iA_2$ are equivalent $\DRH$-automata. Hence,
  Definition \ref{d:1} makes clear the reverse implication. Conversely,
  let $\iA_i = \langle V_i, \to_i, \tq_{i,0}, F_i, \lambda_{i,\h},
  \lambda_i \rangle$ ($i = 1, 2$) be two $\DRH$-automata such that
  $\iL(\iA_1)  = \iL(\iA_2)$.
  We first observe that, for $i = 1,2$ and $\alpha \in \Sigma^*$,
  the state $\tq_{i,0}.\alpha$ is defined if and only if there exists an
  element in $\iL(\iA_i)$ of the form $(\alpha, \_, \_)$. Hence,
  the state $\tq_{1,0}.\alpha$ is defined if and only if so is the state
  $\tq_{2,0}.\alpha$.
  Choose $\alpha = \alpha_0\alpha_1\cdots \alpha_n\in \Sigma^*$, with
  each $\alpha_i \in \Sigma$. If
  $\tq_{1,0}.\alpha \in F_1$, then we have a successful path
  $\tq_{1,0} \xrightarrow{\alpha_0} \tq_{1,1}
    \xrightarrow{\alpha_1} \cdots \xrightarrow{\alpha_{n}}
    \tq_{1,n+1}$, so that, the element
  $\prod_{i = 0}^n(\alpha_i,
  \lambda_{1,\h,\alpha_i}(\tq_{1,i}),\lambda_1(\tq_{1,i}))$ belongs to
  $\iL(\iA_1)$ and hence, to $\iL(\iA_2)$. But that implies that, in
  $\iA_2$, there is a successful path  $\tq_{2,0}
    \xrightarrow{\alpha_0} \tq_{2,1} \xrightarrow{\alpha_1} \cdots
    \xrightarrow{\alpha_{n}}
    \tq_{2,n+1},$ which in turn yields that both $\tq_{1,0}.\alpha$
  and $\tq_{2,0}.\alpha$ are terminal states. In particular the equalities in
  \eqref{eq:10} hold. On the other hand, if
  $\tq_{1,0}.\alpha$ is not a terminal state, then condition
  \ref{a3} implies that $\tq_{1,0}.\alpha0$ is defined. Since any
  $\DRH$-automaton is trim, there exists $\beta = \alpha_{n+2} \cdots
  \alpha_{m}  \in \Sigma^*$ such that
  \begin{equation}
    \label{eq:1}
    \tq_{1,0}
      \xrightarrow{\alpha_0} \tq_{1,1} 
      \xrightarrow{\alpha_1} \cdots \xrightarrow{\alpha_{n}}
      \tq_{1,n+1} \xrightarrow{0} \tq_{1,n+2} \xrightarrow{\alpha_{n+2}}
      \cdots \xrightarrow{\alpha_m} \tq_{1,m+1}
  \end{equation}
  is a successful
  path in $\iA_1$. Again, since $\iL(\iA_1) = \iL(\iA_2)$, this
  determines a successful path in $\iA_2$ given~by  $\tq_{2,0}
    \xrightarrow{\alpha_0} \tq_{2,1} 
    \xrightarrow{\alpha_1} \cdots \xrightarrow{\alpha_{n}}
    \tq_{2,n+1} \xrightarrow{0} \tq_{2,n+2} \xrightarrow{\alpha_{n+2}}
    \cdots \xrightarrow{\alpha_m} \tq_{2,m+1},$
  with the same value as the path \eqref{eq:1}. In particular,
  the $(n+2)$-nd letter (of the alphabet $\Sigma \times
  (\pseudo AH)^I \times A$) of that value is
  $$(0, \lambda_{1,\h,0}(\tq_{1,n+1}), \lambda_{1}(\tq_{1,n+1}))=(0,
  \lambda_{2,\h,0}(\tq_{2,n+1}), \lambda_{2}(\tq_{2,n+1})).$$
  But that means
  precisely that the desired equalities in \eqref{eq:10} hold. Therefore,
  $\iA_1$ and $\iA_2$ are equivalent and so, $\vec \iA_1 = \vec\iA_2$.
\end{proof}
\begin{prop} \label{3.24}
  Let $u, v\in \pseudo AS$. Then the equality $\rho_\DRH(u) =
  \rho_\DRH (v)$ holds if and only if $\iL(u) = \iL(v)$ and $\h$
  satisfies $u = v$.
\end{prop}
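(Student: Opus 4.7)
The plan is to chain together the machinery already set up in the paper: Lemma~\ref{3.23} turns the language equality into tree equality, Theorem~\ref{3.21} turns tree equality into an $\Req$-class equality, and Lemma~\ref{sec:16} assembles the $\Req$-information together with the $\h$-information into a full $\DRH$-equality.

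For the direct implication, assume $\rho_{\DRH}(u) = \rho_{\DRH}(v)$. Because every group in $\h$ has a single regular $\Req$-class (itself), the inclusion $\h \subseteq \DRH$ holds, so $\rho_{\h}$ factors as $\rho_{\DRH,\h}\circ \rho_{\DRH}$; thus $u =_{\DRH} v$ immediately gives $u =_{\h} v$. Moreover, equality of $\rho_{\DRH}(u)$ and $\rho_{\DRH}(v)$ gives in particular $\rho_{\DRH}(u)\Req \rho_{\DRH}(v)$, so by the definition of $\iT(\cdot)$ as the canonical $\DRH$-tree in the $\sim$-class $\bpi^{-1}([\rho_{\DRH}(\cdot)]_{\Req})$, we have $\iT(u) = \iT(v)$, whence $\iL(u) = \iL(\iT(u)) = \iL(\iT(v)) = \iL(v)$.

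For the converse, assume $\iL(u) = \iL(v)$ and $u =_{\h} v$. Applying Lemma~\ref{3.23} to the $\DRH$-trees $\iT(u)$ and $\iT(v)$ (which equal their own underlying trees) yields $\iT(u) = \iT(v)$. Hence $\bpi^{-1}([\rho_{\DRH}(u)]_{\Req}) = \bpi^{-1}([\rho_{\DRH}(v)]_{\Req})$, and by the bijectivity of $\bpi$ provided by Theorem~\ref{3.21}, we conclude that $\rho_{\DRH}(u) \Req \rho_{\DRH}(v)$, that is, $\DRH$ satisfies $u \Req v$. Combining this with the assumption $u =_{\h} v$ and invoking Lemma~\ref{sec:16} with $u_0 = v_0 = I$ (whose hypothesis $c(I) = \emptyset \subseteq \cum u$ is trivially met) delivers $u =_{\DRH} v$, which is exactly $\rho_{\DRH}(u) = \rho_{\DRH}(v)$.

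There is really no substantial obstacle: the proof is a direct assembly of the three previous results, and the only minor point to be careful about is the factorization $\rho_{\h} = \rho_{\DRH,\h}\circ \rho_{\DRH}$ used in the easy direction, which relies on the inclusion $\h \subseteq \DRH$. Everything else is routine bookkeeping.
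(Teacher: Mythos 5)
Your proof is correct and follows essentially the same route as the paper's own: the forward direction uses $\h \subseteq \DRH$ and the definition of $\iT(\cdot)$ via Theorem~\ref{3.21}, and the converse chains Lemma~\ref{3.23}, the bijectivity of $\bpi$ from Theorem~\ref{3.21}, and Lemma~\ref{sec:16} with $u_0 = v_0 = I$, exactly as in the paper.
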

\begin{proof}
  Let $u$ and $v$ be two equal pseudowords modulo $\DRH$. In particular,
  the $\Req$-classes $[\rho_{\DRH}(u)]_{\Req}$ and
  $[\rho_{\DRH}(v)]_{\Req}$ coincide and so, the $\DRH$-trees $\iT(u)$
  and $\iT(v)$ are the same, by Theorem \ref{3.21}. Therefore, we have
  $\iL(u) = \iL(\iT(u)) = \iL(\iT(v)) = \iL(v)$.
  As $\h$ is a subpseudovariety of $\DRH$, we also have $u =_\h
  v$.
  Conversely, suppose that $\iL(u) = \iL(v)$ and $u =_\h v$.
  By Lemma \ref{3.23}, it follows that $\iT(u) =
  \iT(v)$. Thus, by Theorem~\ref{3.21}, the pseudovariety $\DRH$
  satisfies $u \Req v$. As, in addition, the pseudowords $u$ and $v$
  are equal modulo $\h$, we conclude by Lemma \ref{sec:16} that
  $\DRH$ satisfies $u = v$.
\end{proof}
\section{A canonical form
  for \texorpdfstring{$\kappa$}{$k$}-words over $\DRH$}\label{section4}

Throughout this section, we reserve the letter $\h$ to denote a
pseudovariety of groups 
such that there exists a canonical form for the elements of $\pseudok
AH$.
We denote by $\cf_\h(w)$
the canonical form of $w \in \pseudok
AH$ and set~$\cf_\h(I) = I$.
Our aim is to prove that this assumption on $\h$ is
enough to define a canonical form for the elements of~$\pseudok
A{DRH}$.

Given a finite $\DRH$-automaton $\iA = \langle V, \to \tq, F,
\lambda_\h, \lambda\rangle$ such that $\lambda_\h(V)\subseteq (\pseudok
AH)^I$, let us define the expression
$\picf(\iA)$ inductively on the  number $\card V$ of states as
follows. 
\begin{itemize}
\item If $\card V = 1$, then $\iA = {\bf 1}$ and we take $\picf(\iA) =
  I$.
\item If $\card V > 1$ and $\norm \iA < \infty$, then we put
  \begin{align*}
    \picf(\iA) = \prod_{i = 0}^{\norm{\iA} - 1}
    \picf(\iA_{\tq.1^{i}0}) 
    \cf_\h(\lambda_{\h}(\tq.1^i)) \lambda(\tq.1^i).
  \end{align*}
\item Finally, we suppose that $\card V > 1$ and $\norm \iA = \infty$.
  Since $\iA$ is a finite automaton, we necessarily have a cycle of
  the form $\tq.1^{\ell}\xrightarrow{1} \tq.1^{\ell+1}
  \xrightarrow{1} \cdots \xrightarrow{1} \tq.1^{\ell + n} \xrightarrow{1}
  \tq.1^\ell$, where $\ell$ is a certain integer greater than or equal to
  $\regi \iA$.
  Choose $\ell$ to be the least possible.
  Then, we make $\picf(\iA)$ be given by
  \begin{align*}
    &\prod_{i = 0}^{\regi\iA - 1} \picf(\iA_{\tq.1^{i}0}) 
    \cf_\h(\lambda_{\h}(\tq.1^i)) \lambda(\tq.1^i)
    \\ &  \quad\cdot
    \Bigg( \prod_{i = \regi\iA}^{\ell-1} \picf(\iA_{\tq.1^{i}0})
         \cf_\h(\lambda_{\h}(\tq.1^i)) \lambda(\tq.1^i)
    \\ & \qquad \cdot\Big( \prod_{i = 0}^{n}
    \picf(\iA_{\tq.1^{\ell+i}0}) \cf_\h(\lambda_\h(\tq.1^{\ell + i}))
    \lambda(\tq.1^{\ell +i})\Big)^\omega\Bigg)^\omega.
  \end{align*}
\end{itemize}

We point out that, by definition, the value of the
$\kappa$-word over $\DRH$ naturally induced by $\picf(\iA)$ is precisely
$\pi(\iA)$.
On the other hand, it is easy to check that, for every $w \in \pseudo
A{DRH}$, if $w \Req \pi(\iA)$, then the identity $w = \pi(\iA) \reg w$ holds.
Thus, in view of Theorem \ref{3.21}, we wish to standardize a choice of a finite
$\DRH$-automaton, say $\iA(w)$, equivalent to $\iT(w)$, for each $w
\in \pseudok A{DRH}$.
After that, we may let the canonical form of $w$ be
given by $\picf(\iA(w))\cf_\h(\reg w)$.

Fix a $\DRH$-automaton $\iA = \langle V, \to, \tq, F, \lambda_\h,
\lambda \rangle$. We say that two states $\tv_1, \tv_2 \in V$ are
\emph{equivalent} if $\pi(\iA_{\tv_1})$ and $\pi(\iA_{\tv_2})$ lie in
the same $\Req$-class.
Clearly, this defines an equivalence relation on $V$, say~$\sim$
(it should be clear from the context when we are referring to this
equivalence relation or to the equivalence relation on $\Aa_A$
introduced in Definition \ref{d:1}).
We write $[\tv]$ for the equivalence class of~$\tv \in V$.
\begin{lem}\label{l:1}
  Let $\iA = \langle V, \to, \tq, F, \lambda_\h,\lambda \rangle$ be a
  $\DRH$-automaton and consider the equivalent class on~$V$ defined
  above.
  Then, for every $\tv_1, \tv_2 \in V \setminus F$, we have
  \begin{align*}
    [\tv_1] = [\tv_2] \implies
    \begin{cases}
      [\tv_1.0] = [\tv_2.0] \text{ and } [\tv_1.1] = [\tv_2.1];
      \\ \lambda_\h(\tv_1) = \lambda_\h(\tv_2)
      \text{ and } \lambda(\tv_1) = \lambda(\tv_2).
    \end{cases}
  \end{align*}
\end{lem}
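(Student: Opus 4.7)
My plan is to derive everything from Theorem~\ref{3.21} together with the definition of $\sim$ on $\Aa_A$. Since the hypothesis $[\tv_1] = [\tv_2]$ means $\pi(\iA_{\tv_1}) \Req \pi(\iA_{\tv_2})$, the injectivity half of Theorem~\ref{3.21} immediately upgrades this to a statement about the sub-automata themselves: $\iA_{\tv_1} \sim \iA_{\tv_2}$ in the sense of Definition~\ref{d:1}.

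From this $\sim$-equivalence I extract all four conclusions. First, specializing \eqref{eq:10} to $\alpha = \varepsilon$ yields $\lambda(\tv_1) = \lambda(\tv_2)$ and $\lambda_{\h}(\tv_1) = \lambda_{\h}(\tv_2)$. Second, to compare the children I note that, because $\tv_1, \tv_2 \notin F$, condition \ref{a3} ensures that $\tv_i.0$ and $\tv_i.1$ are all defined. For any $\beta \in \Sigma^*$, applying \eqref{eq:10} with $\alpha = 0\beta$ gives
\begin{align*}
  \lambda(\tv_1.0\beta) &= \lambda(\tv_2.0\beta), \\
  \lambda_\h(\tv_1.0\beta) &= \lambda_\h(\tv_2.0\beta),
\end{align*}
with the same convention that both sides are undefined simultaneously. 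Reading this as the definition of $\sim$ applied to the sub-automata, I get $\iA_{\tv_1.0} \sim \iA_{\tv_2.0}$. Applying now the surjectivity and well-definedness of $\overline\pi$ (i.e., the other direction of Theorem~\ref{3.21}), I conclude $\pi(\iA_{\tv_1.0}) \Req \pi(\iA_{\tv_2.0})$, which is exactly $[\tv_1.0] = [\tv_2.0]$. The argument for the $1$-children is identical, using $\alpha = 1\beta$.

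There is no real obstacle here: the lemma is essentially a repackaging of Theorem~\ref{3.21} together with the functoriality of the construction $\iA \mapsto \iA_{\tv}$ with respect to $\sim$. The only point worth being careful about is confirming that each child state used actually exists, which is guaranteed by the non-finality of $\tv_1, \tv_2$ and condition~\ref{a3}, and that the label agreement obtained from \eqref{eq:10} applied to $\iA_{\tv_1}, \iA_{\tv_2}$ with words of the form $0\beta$ (respectively $1\beta$) is exactly the label agreement required by Definition~\ref{d:1} for the sub-automata $\iA_{\tv_i.0}$ (respectively $\iA_{\tv_i.1}$) viewed with root at the child.
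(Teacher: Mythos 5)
Your proof is correct, but it takes a genuinely different route from the paper. The paper argues directly at the level of $\pseudo A{DRH}$: it applies Lemma~\ref{l:2} to write $\lbf{\pi(\iA_{\tv_i})} = (\pi(\iA_{\tv_i.0})\lambda_\h(\tv_i), \lambda(\tv_i), w_{i,r})$ with $w_{i,r} \Req \pi(\iA_{\tv_i.1})$, invokes uniqueness of the left basic factorization (Proposition~\ref{p:1}) to match components — giving $\lambda(\tv_1)=\lambda(\tv_2)$ and the two $\Req$-equivalences of children values — and then runs a cancellation argument via \eqref{eq:32}, stripping off $\piir{\iA_{\tv_i.0}}$ and the idempotent $\id{\iA_{\tv_i.0}}$ to recover $\lambda_\h(\tv_1)=\lambda_\h(\tv_2)$. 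You instead outsource all of that pseudoword-level work to Theorem~\ref{3.21}: injectivity of $\overline\pi$ turns $\pi(\iA_{\tv_1}) \Req \pi(\iA_{\tv_2})$ into $\iA_{\tv_1} \sim \iA_{\tv_2}$, after which the label equalities ($\alpha=\varepsilon$) and the children equivalences ($\alpha = 0\beta$, $1\beta$, then back through $\overline\pi$) are read off from Definition~\ref{d:1}. This is legitimate and non-circular (Theorem~\ref{3.21} is proved in Section~\ref{section3}, before the lemma), and it is shorter and more transparent: it exhibits the lemma as a formal consequence of the bijection together with the fact that $\sim$ is inherited by sub-automata rooted at corresponding states; the hard cancellation with the idempotents is not redone, because the proof of Theorem~\ref{3.21} already contains exactly that step. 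The paper's proof, by contrast, is local and self-contained, using only the lbf machinery rather than the full strength of the bijection. Two small points of hygiene in your write-up: the direction of Theorem~\ref{3.21} you need for the children is just the \emph{well-definedness} of $\overline\pi$ (that $\sim$-equivalent automata have $\Req$-equivalent values); surjectivity plays no role. And your argument tacitly uses that a sub-automaton of a $\DRH$-automaton rooted at any state is again a $\DRH$-automaton, so that Theorem~\ref{3.21} applies to it — this is true (conditions \ref{a1}--\ref{a6} are hereditary) and is used implicitly throughout the paper, starting with the very definition of the equivalence on $V$, so it is not a gap, but it deserves a sentence.
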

\begin{proof}
  Let $\tv_1, \tv_2 \in V \setminus F$ be non-terminal states.
  By definition, the classes $[\tv_1]$ and~$[\tv_2]$
  coincide if and only if $\pi(\iA_{\tv_1}) \Req \pi(\iA_{\tv_2})$.
  Moreover, by Lemma \ref{l:2}, we have the equality $\lbf{\pi(\iA_{\tv_1})} =
  (\pi(\iA_{\tv_1.0})\lambda_\h(\tv_1), \lambda(\tv_1), w_{1,r})$,
  where $w_{1,r}$ is $\Req$-equivalent to $\pi(\iA_{\tv_1.1})$.
  Similarly, there exists $w_{2,r} \Req \pi(\iA_{\tv_2.1})$ such that
  $\lbf{\pi(\iA_{\tv_2})} =
  (\pi(\iA_{\tv_2.0})\lambda_\h(\tv_2), \lambda(\tv_2), w_{2,r})$.
  In particular, since we are assuming that $\pi(\iA_{\tv_1}) \Req
  \pi(\iA_{\tv_2})$,
  the relations $\pi(\iA_{\tv_1.0}) \Req \pi(\iA_{\tv_2.0})$,
  and $\pi(\iA_{\tv_1.1}) \Req \pi(\iA_{\tv_2.1})$ hold.
  But, that means that~$[\tv_1.0] = [\tv_2.0]$ and~$[\tv_1.1] =
  [\tv_2.1]$.
  Also, the mid components of $\lbf{\pi(\iA_{\tv_1})}$ and
  $\lbf{\pi(\iA_{\tv_2})}$ should coincide, that is, $ \lambda(\tv_1)
  = \lambda(\tv_2)$.
  Finally, we may derive the equality $\lambda_\h(\tv_1) =
  \lambda_\h(\tv_2)$ as follows:
  \begin{align*}
    & \pi(\iA_{\tv_1.0})\lambda_\h(\tv_1) =
    \pi(\iA_{\tv_2.0})\lambda_\h(\tv_2) \quad \text{ because
    $\pi(\iA_{\tv_1}) \Req \pi(\iA_{\tv_2})$}
    \\ \iff & \piir{\iA_{\tv_1.0}}\id{\iA_{\tv_1.0}}\lambda_\h(\tv_1) =
    \piir{\iA_{\tv_2.0}}\id{\iA_{\tv_2.0}}\lambda_\h(\tv_2)
              \quad\text{ by \eqref{eq:32}}
    \\ \implies &\id{\iA_{\tv_1.0}}\lambda_\h(\tv_1) =
    \id{\iA_{\tv_2.0}}\lambda_\h(\tv_2)\quad \text{ by Lemma \ref{l:2}
    and Proposition \ref{p:1}}
    \\ \implies &\lambda_\h(\tv_1) = \lambda_\h(\tv_2).\qed \popQED
  \end{align*}
\end{proof}
We define the \emph{wrapping of a $\DRH$-automaton}
$\iA = \langle V,
\to, \tq, F, \lambda_\h,\lambda \rangle$ to be the $\DRH$-automaton
$[\iA] = \langle V/{\sim}, \to, [\tq], F/{\sim},
\overline\lambda_\h,\overline\lambda \rangle$, where
\begin{itemize}
\item $[\tv].0 = [\tv.0]$ and $[\tv].1 = [\tv.1]$, for $\tv \in V
  \setminus F$;
\item $\overline\lambda_\h([\tv]) = \lambda_\h(\tv)$ and
  $\overline\lambda([\tv]) = \lambda(\tv)$, for $\tv \in V$.
\end{itemize}
By Lemma \ref{l:1}, this automaton is well defined.
Furthermore, its definition ensures that~$\iA \sim [\iA]$.
The
\emph{wrapped $\DRH$-automaton of $w \in \pseudo A{DRH}$} is $\iA(w) =
[\iT(w)]$.
Observe that, by Lemmas \ref{l:6} and \ref{arv}, the label
$\lambda_\h$ of $\iT(w)$ takes values in $\pseudok AH$ when $w$ is a
$\kappa$-word.
Our next goal is to prove that $\iA(w)$ is finite, provided $w$ is a
$\kappa$-word.

Let us
associate to a pseudoword $w \in (\pseudo A{DRH})^I$ a certain set of
its factors.
For $\alpha \in \Sigma^*$, we define $f_\alpha(w)$ inductively on
$\card \alpha$:
\begin{align*}
  f_\varepsilon(w) &= w;
  \\ (f_{\alpha0}(w), a, f_{\alpha1}(w)) &= \lbf {f_\alpha(w)}, \text{
  for a certain $a \in A$, whenever $f_\alpha(w) \neq I$}.
\end{align*}
Then, the set of \emph{$\DRH$-factors}
of $w$ is given by
$$\iF(w) = \{f_\alpha(w)\colon \alpha \in \Sigma^* \text{ and }
f_\alpha(w) \text{ is defined}\}.$$
The relevance of the definition of the set $\iF(w)$ is explained
by the following result.
\begin{lem}\label{l:3}
  Let $w \in \pseudo A{DRH}$ and $\iT(w) = \langle V, \to, \tq, F,
  \lambda_\h, \lambda\rangle$. Then, for every $\alpha \in \Sigma^*$
  such that $f_\alpha(w)$ is defined,
  the relation~$f_\alpha(w) \Req \pi(\iT(w)_{\tq.\alpha})$ holds.
\end{lem}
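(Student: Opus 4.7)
My plan is to prove Lemma~\ref{l:3} by induction on $\card \alpha$, using Lemma~\ref{arv} as the key structural ingredient and Theorem~\ref{3.21} to identify subtrees of $\iT(w)$ with the $\DRH$-trees of their ``represented'' pseudowords.

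For the base case $\alpha = \varepsilon$, I simply note that $f_\varepsilon(w) = w$ and $\iT(w)_{\tq.\varepsilon} = \iT(w)$, and by construction $\iT(w)$ is the unique $\DRH$-tree in the class $\bpi^{\:-1}([w]_\Req)$, so $\pi(\iT(w)) \Req w$. (The degenerate case $w = I$, where $\iT(w) = {\bf 1}$ and $\pi({\bf 1}) = I$, is handled with the same statement.)

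For the inductive step, suppose the claim holds for $\alpha$ and assume $f_{\alpha\beta}(w)$ is defined for some $\beta \in \Sigma$, so that $f_\alpha(w) \neq I$ and $\lbf{f_\alpha(w)} = (f_{\alpha0}(w), a, f_{\alpha1}(w))$ for some $a \in A$. By induction hypothesis, $f_\alpha(w) \Req \pi(\iT(w)_{\tq.\alpha})$. The subtree $\iT(w)_{\tq.\alpha}$ is itself a $\DRH$-tree (conditions \ref{a1}--\ref{a6} are inherited and it is a tree because $\iT(w)$ is one), so by Theorem~\ref{3.21} the bijectivity of $\bpi$ forces $\iT(w)_{\tq.\alpha} = \iT(f_\alpha(w))$.

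Now I invoke Lemma~\ref{arv} to compute
\begin{equation*}
  \iT(f_\alpha(w)) = \bigl(\iT(f_{\alpha 0}(w)),\ \reg{f_{\alpha 0}(w)} \mid a,\ \iT(f_{\alpha 1}(w))\bigr),
\end{equation*}
and the construction of $(\iA_0, u \mid a, \iA_1)$ identifies the subtrees rooted at the root's $0$- and $1$-children with $\iT(f_{\alpha 0}(w))$ and $\iT(f_{\alpha 1}(w))$ respectively. Hence $\iT(w)_{\tq.\alpha\beta} = \iT(f_{\alpha\beta}(w))$, which yields $\pi(\iT(w)_{\tq.\alpha\beta}) \Req f_{\alpha\beta}(w)$, closing the induction.

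I do not expect a real obstacle here: the argument is entirely driven by the recursive description of $\iT(w)$ in Lemma~\ref{arv} and the fact that, thanks to Theorem~\ref{3.21}, a $\DRH$-tree is determined by the $\Req$-class of its value. The only minor point to check cleanly is that a subtree $\iT(w)_{\tq.\alpha}$ of a $\DRH$-tree is again a $\DRH$-tree, which is immediate from the definition since all axioms \ref{a1}--\ref{a6} are local to the subautomaton rooted at $\tq.\alpha$.
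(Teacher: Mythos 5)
Your proof is correct, but its inductive step follows a genuinely different route from the paper's. The paper runs the same induction on $\card\alpha$ with the same base case (Theorem~\ref{3.21}), but it never passes to an equality of trees: writing $\lbf{\pi(\iT(w)_{\tq.\alpha})} = (w_\ell, a, w_r)$, it uses Lemma~\ref{l:2} to see that $w_\ell \Req \pi(\iT(w)_{\tq.\alpha 0})$ and $w_r \Req \pi(\iT(w)_{\tq.\alpha 1})$, and then Lemma~\ref{sec:13} (two pseudowords are $\Req$-equivalent over $\DRH$ exactly when their iterated left basic factorizations agree) together with uniqueness of the left basic factorization to match these data against $\lbf{f_\alpha(w)} = (f_{\alpha 0}(w), b, f_{\alpha 1}(w))$. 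You instead promote the induction hypothesis $f_\alpha(w) \Req \pi(\iT(w)_{\tq.\alpha})$ to the equality of trees $\iT(w)_{\tq.\alpha} = \iT(f_\alpha(w))$, via injectivity of $\bpi$ and uniqueness of the $\DRH$-tree in each $\sim$-class, and then let Lemma~\ref{arv} perform the decomposition into the two child subtrees. Both arguments are sound. Yours carries a stronger invariant (equality of subtrees rather than mere $\Req$-equivalence of their values) and outsources all left-basic-factorization bookkeeping to Lemma~\ref{arv}; this is precisely the device the paper uses later in the proof of Proposition~\ref{p:9}, so your argument fits naturally into the paper's toolkit, at the cost of resting on that heavier lemma (itself proved from Lemmas~\ref{l:2} and~\ref{sec:16}), whereas the paper's proof of this particular statement gets by with the more elementary matching lemmas. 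The two side points you single out --- that a subtree of a $\DRH$-tree is again a $\DRH$-tree (conditions \ref{a1}--\ref{a6} and the tree property are inherited by subautomata), and that in $(\iA_0, u \mid a, \iA_1)$ the subautomata rooted at the root's two children are exactly $\iA_0$ and $\iA_1$ --- both hold and are indeed the points that needed checking.
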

\begin{proof}
  We prove the statement by induction on $\card \alpha$.
  When $\alpha = \varepsilon$, the result follows from Theorem~\ref{3.21}.
  Let $\alpha \in \Sigma^*$ and invoke the induction hypothesis to assume
  that
  $f_\alpha(w)$ and $\pi(\iT(w)_{\tq.\alpha})$ are $\Req$-equivalents.
  Writing $\lbf{\pi(\iT(w)_{\tq.\alpha})} =
  (w_\ell,a,w_r)$,
  Lemma~\ref{l:2} yields the relations
  $w_\ell \Req \pi(\iT(w)_{\tq.\alpha0})$ and $w_r \Req
  \pi(\iT(w)_{\tq.\alpha1})$.
  On the other hand, since $\lbf{f_\alpha(w)} = (f_{\alpha0}(w), b,
  f_{\alpha1}(w))$,
  using Lemma \ref{sec:13} we deduce that $f_{\alpha0} = w_\ell$, $a =
  b$, and $f_{\alpha1} \Req w_r$, leading to the desired conclusion.
\end{proof}

Hence, in order to prove that $\iA(w)$ is finite for every $\kappa$-word
$w$, it suffices to prove that so is~$\iF(w)/{\Req}$.
The next two lemmas are useful to achieve that target.

\begin{lem}\label{l:13}
  Let $w$ be a regular $\kappa$-word over $\DRH$.
  Then, there exist $\kappa$-words $x$, $y$ and $z$ over $\DRH$ such
  that $w = xy^{\omega-1} z$, $c(y) = c(w)$, $\cum x \subsetneqq c(
  w)$, and $y$ is not regular.
\end{lem}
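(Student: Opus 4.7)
The plan is to induct on the size of a $\kappa$-term $T$ representing $w$, analyzing the outermost operation; the base case, where $T$ is a single letter, is vacuous since then $\cum w=\emptyset\neq c(w)$. If $T=U^{\omega-1}$ with value $u$, either $u$ is not regular and $(x,y,z)=(I,u,I)$ does the job, or $u$ is regular and the inductive hypothesis applied to $U$ yields $u=x_1y_1^{\omega-1}z_1$ with the required properties; in the latter case rewrite $w$ using the identity $u^{\omega-1}=u\cdot (u^{\omega-1})^2$ (which follows from $(x^{\omega-1})^q=x^{\omega-q}$ in $\pseudok A\S$, recalled in Section~\ref{section2}) to get $w=x_1y_1^{\omega-1}z_1\cdot (u^{\omega-1})^2$, and take $z=z_1(u^{\omega-1})^2$.

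If $T=T_1T_2$ with values $w_1,w_2$, the easy sub-case is $w_1$ regular with $c(w_1)=c(w)$: the IH applied to $T_1$ gives $w_1=x_1y_1^{\omega-1}z_1$, and we take $z=z_1w_2$. Otherwise (either $c(w_1)\subsetneq c(w)$, or $w_1$ is not regular with $c(w_1)=c(w)$), I claim $w_2$ is regular with $c(w_2)=c(w)$. Iterated LBF of $w$ must eventually settle inside a suffix of $w_2$ — immediately if $c(w_1)\subsetneq c(w)$, since the first LBF of $w$ needs a letter outside $c(w_1)$; and after finitely many steps in the other case because $\cum w_1\neq\emptyset$ would force $\cum w=\cum w_1\subsetneq c(w)$ and contradict regularity of $w$, so $\cum w_1=\emptyset$ and iteration exhausts $w_1$. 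Since $w$ is regular, the suffix of $w_2$ on which iteration settles is itself regular with content $c(w)$, and invoking the key multiplicative fact that $uv$ is regular with $c(uv)=c(v)$ whenever $v$ is regular and $c(u)\subseteq c(v)$ forces $w_2$ to be regular with $c(w_2)=c(w)$. Then the IH on $T_2$ gives $w_2=x_2y_2^{\omega-1}z_2$, and we take $(x,y,z)=(w_1x_2,y_2,z_2)$.

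To close the argument I verify $\cum(w_1x_2)\subsetneq c(w)$. If $c(x_2)\subseteq c(w_1)$, iterated LBF of $w_1x_2$ coincides with that of $w_1$, either remaining inside $w_1$ (so $\cum(w_1x_2)=\cum w_1\subsetneq c(w)$) or exhausting $w_1$ and entering $x_2$ (so $\cum(w_1x_2)=\cum x_2\subsetneq c(w)$ by the IH on $T_2$). Otherwise the first LBF of $w_1x_2$ crosses into a suffix $x_{2,r}$ of $x_2$, and $\cum x_{2,r}=c(w)$ would, via the same multiplicative fact applied to $x_2=(\text{prefix})\cdot x_{2,r}$, make $x_2$ regular with content $c(w)$ and contradict $\cum x_2\subsetneq c(w)$. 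The main obstacle is pinning down the multiplicative fact about preservation of regularity under left multiplication by a word of content contained in that of a regular element: this is a local property of iterated LBF in $\pseudo A{DRH}$, provable by tracking content evolution along each LBF step with the aid of Lemma~\ref{l:6} and Proposition~\ref{c:8}.
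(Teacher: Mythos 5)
Your reduction of the product case contains a step that is false. In the ``otherwise'' branch you argue: ``$\cum{w_1}\neq\emptyset$ would force $\cum w=\cum{w_1}\subsetneqq c(w)$ and contradict regularity of $w$, so $\cum{w_1}=\emptyset$ and iteration exhausts $w_1$.'' The implication $\cum{w_1}\neq\emptyset\Rightarrow\cum w=\cum{w_1}$ does not hold, because the left basic factorizations of $w=w_1w_2$ are not computed inside $w_1$: a letter of $w_2$ missing from the current remainder of $w_1$ pulls the factorization across the boundary. Concretely, take $w_1=ab^{\omega-1}$ and $w_2=(ab)^{\omega-1}$. Then $c(w_1)=\{a,b\}=c(w)$ and $w_1$ is not regular, so you are in exactly this branch, and $\cum{w_1}=\{b\}\neq\emptyset$; yet $w=ab^{\omega-1}(ab)^{\omega-1}$ is regular: its successive left basic factors are $ab$, $b^{\omega-2}a$, $ba$, $ba,\ldots$, all of content $\{a,b\}$. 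So both the implication and the conclusion you draw from it fail, even though the claim they were meant to justify ($w_2$ regular with $c(w_2)=c(w)$) is still true. The correct trichotomy is: $\cum{w_1}=\emptyset$; or $c(w_2)\subseteq\cum{w_1}$, in which case $\cum w=\cum{w_1}$ does follow and contradicts regularity; or $\cum{w_1}\neq\emptyset$ but $c(w_2)\not\subseteq\cum{w_1}$, in which case the factorization eventually jumps into $w_2$ because the contents of the remainders of $w_1$ shrink to $\cum{w_1}$. The same oversight recurs in your closing verification: even when $c(x_2)\subseteq c(w_1)$, the iterated factorization of $w_1x_2$ can jump into the middle of $x_2$ before $w_1$ is exhausted, so $\cum{w_1x_2}$ need not equal $\cum{w_1}$ or $\cum{x_2}$; it equals $\cum{s}$ for some internal suffix $s$ of $x_2$, and that case must be closed by the same contradiction argument you give in your second sub-case.

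Beyond this, the entire argument rests on the ``multiplicative fact'' ($v$ regular and $c(u)\subseteq c(v)$ imply $uv$ regular with $c(uv)=c(v)$), which you invoke three times but do not prove; it is true, but it is precisely where all the nontrivial work about products lives, so declaring it ``provable by tracking content'' leaves the proof incomplete at its load-bearing point. Note that the paper sidesteps product analysis entirely: it writes $w=w_1\cdots w_n$ with each $w_i$ a letter or an $(\omega-1)$-power, observes that regularity ($c(w)=\cum w$) forces some $w_j=v_0^{\omega-1}$ with $c(v_0)=c(w)$, takes the leftmost such $j$ (minimality gives $\cum{w_1\cdots w_{j-1}}\subsetneqq c(w)$), and, if $v_0$ is again regular, repeats inside $v_0$, rewriting $w=u_0u_1v_1^{\omega-1}z_1v_0^{\omega-2}z_0$; the recursion terminates because a $\kappa$-term contains finitely many $(\omega-1)$-powers. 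If you repair the two case analyses and supply a proof of the multiplicative fact your induction can be made to work, but it is substantially heavier than the paper's direct argument.
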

\begin{proof}
  By definition of $\kappa$-word, we may write $w = w_1\cdots w_n$,
  where each $w_i$ is either a letter in $A$ or an $(\omega-1)$-power
  of another $\kappa$-word.
  Since any letter of the cumulative content of $w$
  occurs in~$\lbf[\infty] w$
  infinitely many times,
  there must be an $(\omega-1)$-power under which they all appear.
  Hence, since $w$ is regular (and so, $c(w) = \cum w$),
  there exists an index $i \in \{1,\ldots, n\}$ such that $w_i
  = v^{\omega-1}$ and $c(v) = c(w)$.
  Let $j$ be the minimum such~$i$.
  We have $w = u_0 v_0^{\omega-1}z_0$, where $u_0 = w_1\cdots w_{j-1}$,
  $v_0^{\omega-1} = w_j$, and~$z_0 = w_{j+1}\cdots w_n$.
  Also, minimality of $j$ yields that $\cum{u_0}\subsetneqq \cum w = c(w)$.
  So, if $v_0$ is not regular, then we just take $x = u_0$, $y
  = v_0$, and $z = z_0$.
  Suppose that $v_0$ is regular.
  Using the same reasoning, we may write $v_0 = u_1
  v_1^{\omega-1}z_1$, with $\cum{u_1} \subsetneqq c(w)$ and $c(v_1)
  = c(v_0) = c(w)$.
  Again, if $v_1$ is not regular, then we may choose $x =
  u_0u_1$, $y = v_1$ and $z = z_1 v_0^{\omega-2} z_0$.
  Otherwise, we repeat the process with $v_1$.
  Since $w$ is a $\kappa$-word, there is only a finite number of
  occurrences of $(\omega-1)$-powers, so that, this iteration cannot
  run forever.
  Therefore, we eventually get $\kappa$-words $x$, $y$ and $z$
  satisfying the desired properties.
\end{proof}
\begin{lem}\label{l:14}
  Let $w \in \pseudok A{DRH}$ be regular.
  For each $m \ge 1$, let $w_m'$ be the unique $\kappa$-word over $\DRH$
  satisfying the equality $w = \lbf[1] w \cdots \lbf[m] w w_m'$.
  Then, both sets $\{\lbf[m] w \colon m \ge 1\}$ and
  $\{[w_m']_{\Req}\colon m \ge 1\}$ are finite.
\end{lem}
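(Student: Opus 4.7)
The plan is first to reduce the finiteness of $\{\lbf[m] w : m \ge 1\}$ to that of $\{[w_m']_{\Req} : m \ge 1\}$, and then to establish the latter by a structural analysis based on Lemma~\ref{l:13}.

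For the reduction, observe that by Lemma~\ref{sec:13} the infinite sequence $\lbf[\infty] u$ depends on $u$ only through $[u]_{\Req}$ modulo~$\DRH$. Applying this with $u = w_m'$, the $\kappa$-word $\lbf[m+1] w$ is determined (as an element of~$\pseudok A{DRH}$) by $[w_m']_{\Req}$, so finiteness of the residual $\Req$-classes implies finiteness of the set of lbf-factors. We therefore focus on the set of $\Req$-classes.

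To show that $\{[w_m']_{\Req}: m\ge 1\}$ is finite, apply Lemma~\ref{l:13} and write $w = x y^{\omega-1} z$ with $c(y) = c(w)$, $\cum x \subsetneqq c(w)$, and $y$ a non-regular $\kappa$-word. I then analyze the iteration of lbf on $w$ in two phases. In the \emph{transient phase}, because $\cum x \subsetneqq c(w) = c(y)$, after finitely many lbf steps, say $m_0$, the ``irregular contribution'' of $x$ has been consumed and $w_{m_0}'$ takes (modulo $\DRH$) the form $y_s \cdot y^{\omega-k_0} \cdot z$, where $y_s$ is a suffix of $y$ obtained via iterated lbf inside one copy of $y$ and $k_0 \ge 1$. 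In the \emph{periodic phase}, for $m > m_0$, one shows by induction on the $\kappa$-complexity of $y$ (which is strictly smaller than that of $w$, since $y^{\omega-1}$ contains $y$ as a proper subterm) that each $w_m'$ stays of the form $y_{s_m} \cdot y^{\omega-k_m}\cdot z$, with $y_{s_m}$ ranging over a finite family of such suffixes. The parameter $k_m \ge 1$ contributes no additional $\Req$-classes, because all $y^{\omega-k}$ belong to the same group $\Heq$-class of~$\pseudo A {DRH}$ (that of the idempotent $y^{\omega}$, cf.~Proposition~\ref{p:3}); hence $y_{s_m}\cdot y^{\omega-k_m}\cdot z$ yields a fixed $\Req$-class for fixed $y_{s_m}$, using Lemma~\ref{sec:16} together with $c(z)\subseteq \cum{y^{\omega-k_m}}$.

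The main obstacle is the bookkeeping in the periodic phase: one must carefully verify that after each ``full passage'' through a copy of $y$ the lbf iteration returns to the same finite family of suffix-residuals $y_s$, so that $\{[w_m']_{\Req}: m > m_0\}$ is genuinely finite. This is where the inductive hypothesis—finiteness of the residual $\Req$-classes of the lbf iteration on a $\kappa$-word of smaller $\kappa$-complexity—must be set up, and possibly strengthened to cover non-regular $\kappa$-words (handled by cutting the iteration at the boundary between the irregular prefix and the regular tail, using Proposition~\ref{c:8}, and applying the regular version to the regular tail).
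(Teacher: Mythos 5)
Your setup matches the paper's: decompose $w = xy^{\omega-1}z$ via Lemma~\ref{l:13}, observe that the residuals eventually have the shape (suffix data)$\,\cdot\, y^{\omega-k}\cdot z$, and note that the exponent $k$ is irrelevant for the $\Req$-class because $c(z)\subseteq c(y)$. Your reduction of the finiteness of $\{\lbf[m]w\}$ to that of $\{[w_m']_{\Req}\}$ via Lemma~\ref{sec:13} is also sound (since $\lbf[m+1]w = \lbf[1]{w_m'}$). However, there is a genuine gap exactly where you flag ``the main obstacle'': you never establish that the suffix-residuals $y_{s_m}$ range over a finite family, and the mechanism you propose for this --- an induction on the $\kappa$-complexity of $y$ --- is both undeveloped and unlikely to work as stated. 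The difficulty is that the factors $\lbf[m]w$ carved inside a copy of $y$ are not lbf factors of $y$ or of any word of smaller complexity: they are lbf factors of words of the form $v\, y^{\omega-k}z$, where $v$ is a suffix of $y$; such words have the same content and the same regularity as $w$ itself, since every residual of a regular word is regular with full content. So the quantity you propose to induct on does not decrease for the words whose lbf iterations you actually need to control, and an inductive hypothesis about the iteration of $y$ on its own (whose lbf steps are computed relative to $c(y)$-completion starting from the front of $y$) says nothing about them.

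The paper closes this gap with a direct, non-inductive argument that your proposal is missing. Each time the lbf iteration crosses into a fresh copy of $y$, the crossing factor $\lbf[k_i+1]w = w_{k_i+1}a_{k_i+1}$ ends at a letter $a_{k_i+1}$ not occurring in $w_{k_i+1}$; writing $y = u_{i+1}a_{k_i+1}v_{i+1}$, where $u_{i+1}$ is the part of that factor lying inside the new copy, one obtains a factorization of the \emph{fixed} word $y$ with $a_{k_i+1}\notin c(u_{i+1})$, that is, a first-occurrence factorization of $y$. By uniqueness of first-occurrence factorizations (Propositions~\ref{p:4} and~\ref{p:1}), such a factorization is determined by the letter alone, so there are at most $\card{c(y)}$ possible pairs $(u_{i+1},v_{i+1})$. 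This single observation is what makes the family of entry points --- and hence the family of suffix-residuals and of lbf factors --- finite. Two further ingredients you do not use: the non-regularity of $y$, which Lemma~\ref{l:13} supplies precisely so that the iteration exits each copy of $y$ after finitely many steps (otherwise the indices $k_i$ need not be defined); and the fact that, for a fixed pair $(u_{i+1},v_{i+1})$, the factors carved inside $v_{i+1}$ are independent of $i$, because all tails $y^{\omega-(i+2)}z$ determine the same $\Req$-class. Without these, your periodic-phase analysis remains a plausible plan rather than a proof.
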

\begin{proof}
  Write $\lbf[m] w = w_ma_m$, for every $m \ge 1$, and $w =
  xy^{\omega-1}z$, with $x$, $y$ and $z$ satisfying
  the properties stated in Lemma~\ref{l:13}.
  We define a sequence of pairs of possibly empty $\kappa$-words
  $\{(u_i,v_i)\}_{i \ge 0}$ and a strictly increasing sequence of
  non-negative integers $\{k_i\}_{i \ge 0}$ inductively as follows.
  We start with $(u_0, v_0) = (I, x)$ and we let $k_0$ be the maximum
  index such that $\lbf[1] w\cdots \lbf[{k_0}] w$ is a prefix of $x$.
  If $x$ has no prefix of this form, then we set $k_0 = 0$.
  We also write $v_0 = v_0'v_0''$, with
  $v_0 ' = \lbf[1] w\cdots \lbf[k_0]w$ (by Proposition \ref{p:1}, given
  $v_0'$ there is only one possible value for $v_0''$).
  For each $i \ge 0$, we let $u_{i+1}$ be such that $w_{k_i+1} = v_i''
  u_{i+1}$ and $v_{i+1}$ is such that $y = u_{i+1}a_{k_i+1}v_{i+1}$.
  Observe that, by uniqueness of first-occurrences
  factorizations, there is
  only one pair $(u_{i+1}, v_{i+1})$ satisfying these conditions.
  The integer $k_{i+1}$ is the maximum such that $\lbf[{k_i+2}] w\cdots
  \lbf[k_i+1] w$ is a prefix of $v_{i+1}$ (or $k_{i+1} = k_i+1$ if
  there is no such prefix) and we factorize
  $v_{i+1} = v_{i+1}'v_{i+1}''$, with $v_{i+1}' = \lbf[{k_i+2}] w\cdots
  \lbf[{k_{i+1}}] w$.
  By construction, for all $i \ge 0$, the
  pseudoidentity 
  $w_{k_i+1}' = v_{i+1} y^{\omega-(i+2)}z$
  holds.
  In particular, for every $m \ge 1$, there exist $i \ge 0$ and $\ell
  \in \{2, \ldots, k_{i+1} - k_i\}$ such that
  \begin{equation}\label{eq:62}
    w_m' = \lbf[k_i+\ell]w  \lbf[{k_i+\ell+1}]w \cdots  \lbf[{k_{i+1}}]
    w v_{i+1}'' y^{\omega-(i+2)}z.
  \end{equation}
  
  On the other hand, for all $i \ge 0$, the factorization
  $y = u_{i+1}a_{k_i+1}v_{i+1}$ is such that $a_{k_i+1} \notin
  c(u_{i+1})$ (recall that $a_{k_i+1} \notin c(w_{k_i+1})$ and
  $u_{i+1}$ is a factor of $w_{k_i+1}$).
  By uniqueness of first-occurrences factorization over $\DRH$,
  it follows that the set $\{(u_{i},v_{i})\}_{i \ge 0}$ is finite.
  Consequently, the set
  $\{\lbf[{k_i+\ell}]w  \lbf [{k_i+\ell+1}]w \cdots  \lbf[{k_{i+1}}] w
  v_{i+1}'' \colon i \ge 0, \: \ell \in \{2, \ldots, k_{i+1}-k_i\}\}$
  is also finite.
  In particular, there is only a finite number of $\kappa$-words $\lbf[m] w$.
  Finally, taking into account that $c(z) \subseteq c(y)$ and
  \eqref{eq:62} we may conclude that there are only finitely many
  $\Req$-classes of the form~$[w_m']_{\Req}$~($m \ge 1$).
\end{proof}

Now, we are able to prove that $\iF(w)/{\Req}$ is finite for every
$\kappa$-word $w$ over $\DRH$.

\begin{prop}\label{p:10}
  Let $w$ be a possibly empty $\kappa$-word over $\DRH$. Then, the quotient
  $\iF(w)/{\Req}$ is finite.
\end{prop}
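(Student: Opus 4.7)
The plan is to induct on $|c(w)|$. The base case $c(w) = \emptyset$ forces $w = I$ and hence $\iF(w) = \{I\}$. For the inductive step, I iterate the left basic factorization of $w$: writing $\lbf[m]w = w_m a_m$ for $m \ge 1$ (as long as defined), and letting $w_m'$ denote the residue after $m$ iterations, so that $w = w_1 a_1 \cdots w_m a_m w_m'$. By Lemma~\ref{l:6}, each $w_m$ and each $w_m'$ is again a $\kappa$-word, and since $a_m \notin c(w_m)$, each $w_m$ satisfies $|c(w_m)| < |c(w)|$. Unwinding the definition of $f_\alpha$, every $f_\alpha(w)$ either equals some $w_m'$ (when $\alpha = 1^m$) or lies in $\iF(w_m)$ for some $m \ge 1$ (when $\alpha$ starts with $1^{m-1}0$), so
\[
\iF(w) = \{w_m' : m \ge 0\} \cup \bigcup_{m \ge 1} \iF(w_m).
\]

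Now I split on whether $w$ is regular. If $w$ is irregular, i.e., $\cum w \subsetneq c(w)$, let $k$ be the least index for which $c(w_k') = \cum w$ (or, if $\cum w = \emptyset$, the least $k$ with $w_k' = I$). Then $|c(w_k')| < |c(w)|$, so by induction $\iF(w_k')/{\Req}$ is finite. For $m \ge k$, one has $w_m' = f_{1^{m-k}}(w_k') \in \iF(w_k')$, and for $m > k$, $w_m = f_{1^{m-k-1}0}(w_k') \in \iF(w_k')$. Combining this with the finite union over $m = 1, \ldots, k$---each $\iF(w_m)/{\Req}$ finite by induction---yields that $\iF(w)/{\Req}$ is finite in the irregular case.

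If instead $w$ is regular, I invoke Lemma~\ref{l:14}, which is precisely what is needed: it asserts that $\{\lbf[m]w : m \ge 1\}$ is finite (hence $\{w_m : m \ge 1\}$ is finite as a set of $\kappa$-words) and that $\{[w_m']_{\Req} : m \ge 1\}$ is finite. Applying the induction hypothesis to each of the finitely many $w_m$---each of strictly smaller content than $w$---shows that $\bigcup_{m \ge 1} \iF(w_m)/{\Req}$ is a finite union of finite sets, while $\{[w_m']_{\Req} : m \ge 0\}$ is finite directly by the lemma. These two pieces together cover $\iF(w)/{\Req}$.

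The principal obstacle lies in the regular case, where a priori every residue $w_m'$ shares the full content $c(w)$, so no content-based reduction is available for them directly. This is exactly what Lemma~\ref{l:14} circumvents, by exploiting the $\kappa$-term shape of $w$ and the uniqueness of first-occurrences factorizations over $\DRH$ to bound the number of $\Req$-classes realized by the $w_m'$. Once that lemma is in hand, the proof reduces to the clean induction described above.
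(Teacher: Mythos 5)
Your proof is correct and follows essentially the same route as the paper's: induction on $\card{c(w)}$, a case split on whether $w$ is regular, with the irregular case handled by iterating the left basic factorization until the content drops and the regular case handled by Lemma~\ref{l:14}. The only (immaterial) difference is that in the irregular case you descend to the least $k$ with $c(w_k') = \cum w$ and absorb all later factors into $\iF(w_k')$, whereas the paper stops at the first index where the content strictly decreases; both yield the same induction.
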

\begin{proof}
  We prove the result by induction on $\card{c(w)}$.
  If $\card{c(w)} = 0$, then it is trivial.
  Suppose that $\card{c(w)} \ge 1$.
  We distinguish two possible scenarios.
  \begin{description}
  \item[Case 1.] The $\kappa$-word $w$ is not regular, that is, $\cum
    w \subsetneqq c(w)$.

    Then, there exists $k \ge 1$ such that $w = w_1a_1 \cdots w_ma_m
    w_m'$, with $\lbf[k] w = w_ka_k$, for $k = 1, \ldots, m$ and
    $c(w_m') \subsetneqq c(w)$.
    By definition of $f_\alpha(w)$, we have the identities
    $f_{1^{k-1}0} = w_k$ (for $k = 1, \ldots, m$) and $f_{1^m} = w_m'$.
    Hence, we may deduce that $\iF(w)$ is the union of the sets
    $\iF(w_k)$ (for $k = 1, \ldots, m$) together with $\iF(w_m')$.
    Using the induction hypothesis on each one of the intervening sets, we
    conclude that $\iF(w)$ is finite.
  \item[Case 2.] The $\kappa$-word $w$ is regular.

    Again, write $\lbf[k] w = w_ka_k$ and $w = \lbf[1]
    w\cdots \lbf[k] w w_k'$, for $k \ge 1$.
    Since $f_{1^{k-1}0}(w) = w_k$ and $f_{1^{k}}(w) = w_{k}'$, for every
    $k \ge 1$, by Lemma \ref{l:14},
    we know that the sets $\{f_{1^{k-1}0}(w)\}_{k \ge 1}$
    and~$\{[f_{1^{k}}(w)]_{\Req}\}_{k \ge 1}$ are both finite. 
    Applying the induction hypothesis to each factor $w_k$, we derive
    that~$\{[f_{1^{k-1}0\alpha}(w)]_{\Req}\colon \alpha \in
    \Sigma^*\}_{k \ge 1}$ is also a finite set.
    Therefore, since any element of $\iF(w)/{\Req}$ is of one of the
    forms ${[}f_{1^{k-1}0\alpha}(w)]_{\Req}$ and $[f_{1^{k}}(w)]_{\Req}$,
    we conclude that $\iF(w)/{\Req}$ is finite as well.\popQED
  \end{description}
\end{proof}

As an immediate consequence (recall Lemma \ref{l:3}), we obtain:
\begin{cor}\label{c:14}
  Let $w$ be a possibly empty $\kappa$-word. Then, the wrapped
  $\DRH$-automaton $\iA(w)$ is finite. \qed
\end{cor}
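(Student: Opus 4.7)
The plan is to estimate the number of $\sim$-classes of states of $\iT(w)$, since the wrapped $\DRH$-automaton $\iA(w) = [\iT(w)]$ has exactly one state per $\sim$-class. Let $\iT(w) = \langle V, \to, \tq, F, \lambda_\h, \lambda \rangle$. I would partition $V$ into non-terminal and terminal states and show each piece contributes only finitely many $\sim$-classes.

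For the non-terminal states, observe that $\iT(w)$ is a tree, so every $\tv \in V$ has a unique $\alpha \in \Sigma^*$ with $\tv = \tq.\alpha$, and for non-terminal $\tv$ the factor $f_\alpha(w)$ is defined. Lemma \ref{l:3} yields $\pi(\iT(w)_{\tq.\alpha}) \Req f_\alpha(w)$. By the very definition of the equivalence $\sim$ on states, this means that the map $\tq.\alpha \mapsto [f_\alpha(w)]_{\Req}$ is well defined and injective from the set of $\sim$-classes of non-terminal states into $\iF(w)/{\Req}$. Proposition~\ref{p:10} guarantees that $\iF(w)/{\Req}$ is finite, bounding the number of such $\sim$-classes.

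For the terminal states, property \ref{a2} implies that if $\tv \in F$ then $\iT(w)_\tv$ has no outgoing transitions and is therefore the trivial $\DRH$-automaton $\mathbf{1}$. By Definition~\ref{sec:22}, $\pi(\iT(w)_\tv) = I$ for every $\tv \in F$, so all terminal states form a single $\sim$-class. Combining both estimates gives
\begin{equation*}
\card{V/{\sim}} \;\le\; \card{\iF(w)/{\Req}} + 1 \;<\; \infty,
\end{equation*}
so $\iA(w)$ is finite. I do not expect any serious obstacle here: the argument is essentially routine bookkeeping once Lemma~\ref{l:3} and Proposition~\ref{p:10} are in hand, and the only subtle point is treating the terminal states separately, which is handled by the observation that sub-automata rooted at terminal states are all isomorphic to $\mathbf{1}$.
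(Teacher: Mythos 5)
Your proof is correct and follows essentially the same route as the paper, which presents Corollary~\ref{c:14} as an immediate consequence of Lemma~\ref{l:3} together with Proposition~\ref{p:10} (via the observation that finiteness of $\iF(w)/{\Req}$ suffices). Your write-up simply makes explicit what the paper leaves implicit: the well-defined injection from $\sim$-classes of non-terminal states of $\iT(w)$ into $\iF(w)/{\Req}$, and the fact that all terminal states collapse to a single $\sim$-class.
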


Unlike the aperiodic case $\R$, the converse of Corollary
\ref{c:14} does not hold in general.
For instance, taking $\h = \G$,
it is not hard to see that $\iA(a^{p^\omega}b)$
(with $p$ a prime number) is finite, although $a^{p^\omega}b$ is not a
$\kappa$-word over ${\sf DRG}$.
A converse is achieved when we further require that the labels
$\lambda_\h$ are valued by $\kappa$-words over $\h$ and that
$\rho_\h(\reg w)$ is
itself a $\kappa$-word.

For a given $w \in (\pseudok A{DRH})^I$, the
expression $$\cf (w) =
\picf(\iA(w)) \cf_\h(\rho_\h(\reg w))$$ is said the
\emph{canonical form of
  $w$}.
We write $\cf(u) \equiv \cf(v)$  (with $u,v \in (\pseudok A{DRH})^I$)
when both sides coincide.
We have just proved the claimed existence of a canonical form for
elements of~$\pseudok A{DRH}$.

\begin{thm}
  Let $\h$ be a pseudovariety of groups such that there exists a
  canonical form for the elements of $\pseudok AH$, say $\cf_\h(\_)$. Then, for all
  $\kappa$-words $u$ and $v$ over $\DRH$, the equality $u = v$ holds if
  and only if $\cf (u) \equiv \cf(v)$. \qed
\end{thm}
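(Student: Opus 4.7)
The plan is to reduce both implications to two structural facts: (a) the assignment $w \mapsto \cf(w)$ is a well-defined function of the element $w \in \pseudok A{DRH}$, giving the forward direction; and (b) the $\kappa$-term $\cf(w)$ evaluates back to $w$ in $\pseudok A{DRH}$, giving the reverse direction.

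For (a), I would argue as follows. Given $u = v$ in $\pseudok A{DRH} \subseteq \pseudo A{DRH}$, the elements share the same $\Req$-class in $\DRH$, so by Theorem~\ref{3.21} they determine the same $\DRH$-tree: $\iT(u) = \iT(v)$, hence $\iA(u) = [\iT(u)] = [\iT(v)] = \iA(v)$. These wrapped automata are finite by Corollary~\ref{c:14}, and the recursive rule for $\picf(\_)$ produces a unique $\kappa$-term from a finite $\DRH$-automaton: in the infinite-length case the integer $\ell$ is pinned down by the required minimality, while the $\lambda_\h$-labels are handled by the canonical form $\cf_\h$ on $\pseudok AH$. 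Hence $\picf(\iA(u)) \equiv \picf(\iA(v))$. Moreover $\reg u = \reg v$ inside $\pseudo A{DRH}$, so $\rho_\h(\reg u) = \rho_\h(\reg v)$ and consequently $\cf_\h(\rho_\h(\reg u)) \equiv \cf_\h(\rho_\h(\reg v))$. Concatenating the two halves gives $\cf(u) \equiv \cf(v)$.

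For (b), I plan to show $\cf(w) = w$ in $\pseudok A{DRH}$. By the remark immediately following the definition of $\picf$, the $\kappa$-term $\picf(\iA(w))$ evaluates to $\pi(\iA(w)) = \pi(\iT(w))$ in $(\pseudo A{DRH})^I$. Let $x$ denote the value of the $\kappa$-term $\cf_\h(\rho_\h(\reg w))$ when interpreted in $\pseudo A{DRH}$; since $\rho_\h$ is a continuous homomorphism, $\rho_\h(x) = \rho_\h(\reg w)$. Then $\cf(w)$ evaluates to $\pi(\iT(w)) \cdot x$, and Lemma~\ref{sec:16} reduces the equality $\pi(\iT(w)) \cdot x = w$ in $\DRH$ to two checks: (i) $\pi(\iT(w)) \Req w$ in $\DRH$, which holds by construction of $\iT(w)$; and (ii) $\pi(\iT(w)) \cdot x = w$ in $\h$. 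For (ii), Lemma~\ref{l:2} identifies $\piir{\iT(w)} = \irr w$, and in $\h$ the idempotent $\id{\iT(w)}$ collapses to the group identity, so
\[
\pi(\iT(w)) \cdot x = \rho_\h(\irr w) \cdot \rho_\h(\reg w) = \rho_\h(w) \quad \text{in } \h.
\]
Therefore $\cf(w) = w$ in $\pseudok A{DRH}$, and the syntactic equality $\cf(u) \equiv \cf(v)$ immediately forces $u = v$.

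The hard part will be verifying (b): although each ingredient is already available in the preliminaries, one must keep careful track of where each element lives, because $\pi(\iT(w))$ is only $\Req$-equivalent to $w$ in $\DRH$ while the correction factor is first extracted in $\h$. Lemma~\ref{sec:16} is precisely tailored to bridge this gap, provided one also checks the side condition that $c(x) \subseteq \cum{\pi(\iT(w))}$; this is ensured by condition \ref{a6} for $\iT(w)$ together with $c(\reg w) = \cum{\reg w} \subseteq \cum w$ from Proposition~\ref{c:8}. Once (b) is in place, the canonicality of the construction makes both directions fall out with essentially no further computation.
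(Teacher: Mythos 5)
Your proposal is correct and takes essentially the same route as the paper, which states this theorem with a \qed precisely because the two facts you isolate were already established in Section~\ref{section4}: well-definedness of $\cf$ (via Theorem~\ref{3.21}, the wrapping construction, and Corollary~\ref{c:14}), and the recovery identity --- that $\picf(\iA(w))$ evaluates to $\pi(\iA(w))$ and $w = \pi(\iA(w))\reg w$ whenever $w \Req \pi(\iA(w))$ --- which the paper calls ``easy to check'' and which you verify through Lemma~\ref{sec:16} exactly as in the surjectivity part of the proof of Theorem~\ref{3.21}. One small quibble: your appeal to condition~\ref{a6} for the side condition $c(x) \subseteq \cum{\pi(\iT(w))}$ is misplaced, since that condition governs the labels $\lambda_\h$ of the automaton rather than the term $\cf_\h(\rho_\h(\reg w))$; what is actually needed is that $\cf_\h$ introduces no letters outside $c(\reg w)$, an assumption the paper itself makes implicitly, so this matches the paper's own level of rigor.
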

\section{\texorpdfstring{$\overline\kappa$}{\textbackslash overline k}-terms seen as
  well-parenthesized words}\label{section5}

In Section \ref{section3}, we characterized $\Req$-classes over $\DRH$ by
means of certain equivalence classes of automata.
In order to solve the $\kappa$-word problem
over $\DRH$, the next goal is to find an algorithm to construct such
automata. This section serves the purpose of preparing that
construction.

\subsection{General definitions}

Let $B$ be a possibly infinite alphabet and
consider the associated alphabet $B_{[\:]} = B \uplus \{[^q,]^q \colon
q \in \zz\}$. We say that a word in $B_{[\:]}^*$ is
\emph{well-parenthesized over $B$} if it does not contain $[^q\:]^q$
as a
factor and if it can be reduced to the empty word $\varepsilon$ by
applying the rewriting rules $[^q \:]^q \to \varepsilon$ and
$a \to \varepsilon$, for $q \in \zz$ and $a \in B$.
We denote the set of all
well-parenthesized words over $B$ by $\dyck B$. The \emph{content} of
a
well-parenthesized word~$x$ is the set of letters in $B$ that occur in
$x$ and it is denoted $c(x)$.

To each $\overline\kappa$-term we
may associate a well-parenthesized word over $A$ inductively as
follows:
\begin{align*}
  \word a &= a, \quad\text{ if } a \in A;
  \\  \word {u \cdot v} &= \word u  \word v, \quad\text{ if $u$ and $v$ are $\overline\kappa$-terms};
  \\  \word {u^{\omega + q}}& = [^q \word u]^q, \quad \text{ if $u$ is
                              a $\overline\kappa$-term}.
\end{align*}
Conversely, we associate a $\kappa$-word
to each well-parenthesized word over $A$ as follows:
\begin{align*}
 \om a  & = a, \quad\text{ if } a \in A;
  \\  \om {xy}  & = \om x \cdot \om y, \quad\text{ if } x,y \in \dyck A;
  \\  \om {[^q x]^q}  & = \om x^{\omega + q}, \quad \text{ if } x \in \dyck A.
\end{align*}
Note that, due to the associative property in both $\dyck A$ and
$\pseudok AS$, $\om\_$ is well-defined.
With the aim of distinguishing the occurrences of each letter in $A$ in a
well-parenthesized word $x$ over $ A$, we assign to each $x \in \dyck
A$ a well-parenthesized word $x_\nn$ over $A \times \nn$ 
containing all
the information about the position of the letters. With that in mind
we define recursively the following family of functions $\{p_k:\dyck A \to \dyck {A \times
  \nn}\}_{k \ge 0}$:
\begin{align*}
  p_k(a) &= (a, k+1), \quad\text{ if } a \in A;
  \\  p_k([^q) &= {[}^q \text{ and } p_k(]^q) = {]}^q,\quad \text{ if } q \in \zz;
  \\ p_k(ay) &= p_k(a)p_{k+1}(y), \quad\text{ if } a \in A_{[\:]} \text{ and } y
  \in A_{[\:]}^*.
\end{align*}
We define $x_\nn = p_0(x)$. For instance, if $x = a[^qb[^rc{]}^r{]}^q$,
then $x_\nn$ is the word $(a,1) [^q(b,2)[^r(c,3)]^r]^q$. It is often
convenient to denote the pair $(a,i)$ by $a_i$.
Let $x \in \dyck{A \times \nn}$. Then, we may associate to $x$ two
well-parenthesized words $\pi_A(x)$ and $\pi_\nn(x)$ corresponding to
the projection of $x$ onto $A_{[\:]}^*$ and onto $\nn_{[\:]}^*$,
respectively. We denote $c_A(x) = c(\pi_A(x))$ and $c_\nn(x) = c(\pi_\nn(x))$.
Given a $\overline\kappa$-term $w$, we denote by $\w$ the well-parenthesized
word $0_0\word {w\#}_\nn$ over the
alphabet $(A \uplus \{0, \#\})\times \nn$.
The map $\eta:\dyck{A\times \nn} \to \pseudok AS$ assigns to each
well-parenthesized word $x \in \dyck{A \times \nn}$ the
$\kappa$-word $\eta(x) = \om{\pi_A(x)}$.

Let $x$ be a well-parenthesized word over $A \times \nn$. We define its
\emph{tail} $\Tt_i(x)$ from position $i \in \nn$ inductively as
follows
\begin{align*}
  \Tt_i(\varepsilon) &= \varepsilon; \\
  \Tt_i(yz) &= \Tt_i(z), \quad\text{ if } y, z \in \dyck{A\times \nn}
    \text{ and } i \notin c_{\nn}(y); \\
 \Tt_i(a_iy) &= y,  \quad\text{ if } y \in \dyck{A \times \nn}; \\
  \Tt_i([^q y ]^q z) &= \Tt_i(y) [^{q-1} y]^{q-1} z, \quad \text{ if
    } y, z \in \dyck{A \times \nn} \text{ and } i \in c_{\nn}(y).
\end{align*}
The \emph{prefix} of $x \in \dyck{A \times \nn}$ until $a \in A$ is
defined by
\begin{align*}
   \tp_a(\varepsilon) &= \varepsilon;
  \\  \tp_a(y z) &= y \tp_a(z),  \quad\text{ if } y, z \in \dyck{A \times
    \nn} \text{ and } a \notin c_A(y);
  \\  \tp_a(a_iy) &= \varepsilon,  \quad\text{ if } y \in \dyck{A \times
    \nn};
  \\ \tp_a([^q y ]^q z) &= \tp_a(y),  \quad\text{ if } y, z \in \dyck{A
    \times \nn} \text{ and } a \in c_A(y).
\end{align*}
The \emph{factor} of a well-parenthesized word $x \in \dyck{A\times \nn}$ \emph{from
$i\in \nn$ until $a \in A$} is given~by
$$x(i,a) = \tp_a(\Tt_i(x)).$$
If instead, we are given a $\overline\kappa$-term $w$, then we write $w(i,a)$
to mean the $\kappa$-word $\eta(\w(i,a))$.
If $a$ is a letter occurring in $\pi_A(x)$, for a well-parenthesized word $x$ over $A
\times \nn$, then it is possible to write $x = y  a_i z$
with $y$ and $z$ possibly empty not necessarily well-parenthesized
words over $A \times \nn$ such that $a \notin c_A(y)$. In this case we say that $a_i$ is a
\emph{marker of $x$}. If $a_i$ is the last first occurrence of a
letter, that is, if the inclusion $c_A(z) \subseteq c_A(y a_i)$
holds, then we say that $a_i$ is the \emph{principal marker of $x$}.

\subsection{Properties of tails and prefixes of well-parenthesized words}

The next results state some properties concerning tails and prefixes of
well-parenthesized words.
Some of the proofs are omitted since they are rather technical and
entirely similar to the proofs of the analogous results in \cite{palavra}.
When that is the case, we refer the reader to the corresponding
result.
\begin{lem}[cf.\ {\cite[Lemma 5.3]{palavra}}]\label{5.3}
  Let $x \in \dyck{A \times \nn}$ and let $a,b \in A$.
  If $b \in c_A(\tp_a(x))$, then $\tp_b(\tp_a(x)) = \tp_b(x)$.
\end{lem}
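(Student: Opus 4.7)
My plan is to prove the equality by structural induction on $x \in \dyck{A \times \nn}$, following the four-clause recursive definition of $\tp_a$. I would first record two easy preliminary facts, both immediate by induction on $x$: that $\tp_a(x)$ is itself well-parenthesized (so the iterated prefix $\tp_b(\tp_a(x))$ is a legitimate expression), and that $a \notin c_A(\tp_a(x))$ always holds (so the degenerate case $a = b$ makes the hypothesis vacuous, and I may henceforth assume $a \neq b$).

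The base case $x = \varepsilon$ is trivial since $c_A(\tp_a(\varepsilon)) = \emptyset$. For the inductive step, I decompose $x$ according to its leftmost ``atom,'' which is either a single positioned letter $c_j$ or a parenthesized block $[^q y]^q$, writing $x = c_j x'$ or $x = [^q y]^q x'$ respectively. In the letter case, if $c = a$ then clause~3 gives $\tp_a(x) = \varepsilon$ and the hypothesis fails; if $c = b$ then clause~3 gives both $\tp_b(x) = \varepsilon$ and $\tp_b(\tp_a(x)) = \tp_b(b_j \tp_a(x')) = \varepsilon$; otherwise clause~2 peels off $c_j$ from both $\tp_b(x)$ and $\tp_b(\tp_a(x))$, and I apply the induction hypothesis to $x'$. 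In the block case, I split on whether $a \in c_A(y)$. If $a \in c_A(y)$, then clause~4 yields $\tp_a(x) = \tp_a(y)$; since the inclusion $c_A(\tp_a(y)) \subseteq c_A(y)$ holds, the assumption $b \in c_A(\tp_a(y))$ forces $b \in c_A(y)$, so clause~4 also gives $\tp_b(x) = \tp_b(y)$, and the induction hypothesis applied to the strictly shorter word $y$ closes the case. If $a \notin c_A(y)$, clause~2 applies with the whole block as the left factor, giving $\tp_a(x) = [^q y]^q \tp_a(x')$; I then split further on whether $b \in c_A(y)$, in which case both sides reduce to $\tp_b(y)$ via clause~4, or $b \in c_A(\tp_a(x'))$, in which case clause~2 peels off $[^q y]^q$ on both sides and the induction hypothesis on $x'$ finishes the argument.

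The main obstacle is essentially bookkeeping: ensuring the case split is exhaustive and that, in every sub-case, the same defining clause of $\tp_{\_}$ gets triggered on both sides of the target equality. A secondary subtlety is that clause~2 is written for an arbitrary splitting $x = yz$ with $a \notin c_A(y)$ and so could in principle be applied in several ways; agreeing to always split off the leftmost atom makes the recursion unambiguous and simultaneously supplies a length-based measure ($z$ in clause~2 and $y$ in clause~4 are each strictly shorter than $x$) on which the induction proceeds.
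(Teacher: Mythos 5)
Your proof is correct. Note that the paper itself offers no argument for this lemma: it is one of the statements whose proofs are declared ``rather technical and entirely similar'' to those in Almeida--Zeitoun and are therefore omitted, with a pointer to \cite[Lemma 5.3]{palavra}. Your structural induction on the leftmost atom of $x$ --- together with the preliminary observations that $\tp_a(x)$ is well-parenthesized, that $a \notin c_A(\tp_a(x))$, and that $c_A(\tp_a(x)) \subseteq c_A(x)$ --- is exactly the kind of routine case analysis being alluded to, and your fix of reading clause~2 via the canonical leftmost-atom splitting properly resolves the only real ambiguity in the recursive definition of $\tp_a$.
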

\begin{lem}[cf.\ {\cite[Lemma 5.4]{palavra}}]\label{5.4}
  Let $x \in \dyck{A \times \nn}$ be such that $a$ belongs to $c_A(x)$. If~$k
  \in c_{\nn}(\tp_a(x))$, then $a \in c_A(\Tt_k(x))$. 
\end{lem}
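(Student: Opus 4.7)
The plan is to proceed by induction on the length of $x$, decomposing $x = Bx'$ where $B$ is the leftmost atomic block of $x$ (either a letter $c_i$ or a bracketed expression $[^q z]^q$), and splitting into cases according to the form of $B$ and whether $a$ lies in $c_A(B)$. At each step I will use the defining clauses of $\tp_a$ and $\Tt_k$ to reduce the situation to a strictly shorter well-parenthesized word and invoke the inductive hypothesis.

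If $B = a_i$ for some $i$, then $\tp_a(x) = \varepsilon$, so the hypothesis $k \in c_\nn(\tp_a(x))$ is vacuous. If $B = c_i$ with $c \neq a$, the recursion gives $\tp_a(x) = c_i\,\tp_a(x')$, whence $c_\nn(\tp_a(x)) = \{i\}\cup c_\nn(\tp_a(x'))$. Either $k = i$, in which case $\Tt_k(x) = x'$ and $a$ occurs in $x'$ because $a \in c_A(x)\setminus\{c\}$; or $k \neq i$ and $k \in c_\nn(\tp_a(x'))$, in which case $\Tt_k(x) = \Tt_k(x')$ by the recursive clause of $\Tt_k$, and induction applied to $x'$ yields $a \in c_A(\Tt_k(x'))$.

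The more delicate situation is $B = [^q z]^q$, where the bracket-unwinding clause $\Tt_k([^q z]^q x') = \Tt_k(z)\,[^{q-1} z]^{q-1}\,x'$ (when $k \in c_\nn(z)$) comes into play. If $a \notin c_A(z)$, then $\tp_a(x) = B\,\tp_a(x')$: when $k \in c_\nn(z) = c_\nn(B)$, the computation of $\Tt_k(x)$ unwinds the bracket and retains $x'$ as a suffix, so $a \in c_A(x') \subseteq c_A(\Tt_k(x))$; otherwise $k \in c_\nn(\tp_a(x'))$, and induction on $x'$ closes the case. If instead $a \in c_A(z)$, then $\tp_a(x) = \tp_a(z)$, so $k \in c_\nn(\tp_a(z)) \subseteq c_\nn(z) = c_\nn(B)$; the inductive hypothesis applied to the strictly shorter word $z$ gives $a \in c_A(\Tt_k(z))$, and since $\Tt_k(x) = \Tt_k(z)\,[^{q-1} z]^{q-1}\,x'$ begins with $\Tt_k(z)$, the conclusion follows.

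The main obstacle is simply carrying out the case analysis exhaustively and verifying that the defining clauses of $\tp_a$ and $\Tt_k$ match the parsing of $x = Bx'$ at each step. Taking the measure of induction to be the total number of symbols of $x$ in $(A\times\nn)_{[\,]}^*$ ensures that the recursive calls on both $x'$ and $z$ are on strictly shorter words, so the induction is well-founded; the bracketed subcase with $a \in c_A(z)$ is the step where the bracket-unwinding clause of $\Tt_k$ is essential, and it is the case where the argument most clearly requires this particular form of the definition.
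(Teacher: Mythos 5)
Your proof is correct: the decomposition $x = Bx'$ into a leading atomic block, the case analysis on whether $B$ is a letter or a bracketed word and on whether $a$ occurs in it, and the length induction line up exactly with the defining clauses of $\tp_a$ and $\Tt_k$, and every case closes (in the subcase $a \in c_A(z)$ the induction is not even needed, since the retained copy $[^{q-1}z]^{q-1}$ in $\Tt_k(x)$ already contains $a$). The paper omits this proof, deferring to the analogous \cite[Lemma 5.4]{palavra}, which is proved by the same kind of structural induction, so your proposal essentially supplies the omitted details of the paper's argument.
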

\begin{lem}[cf.\ {\cite[Lemma 5.5]{palavra}}]\label{5.5}
  Let $x \in \dyck{A \times \nn}$ and let $k \in
  c_{\nn}(\tp_a(x))$. Then, we have $\Tt_k(\tp_a(x)) =
  \tp_a(\Tt_k(x))$. 
\end{lem}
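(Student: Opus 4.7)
The plan is to induct on the length of $x \in \dyck{A \times \nn}$, after first verifying two preliminary facts by parallel inductions: that $\Tt_i$ and $\tp_a$ send $\dyck{A \times \nn}$ into itself (so intermediate expressions such as $\Tt_k(y)$ and $\tp_a(z)$ are still well-parenthesized and may be fed back into the recursive definitions), and that $\tp_a(u v) = \tp_a(u)$ whenever $u \in \dyck{A \times \nn}$ and $a \in c_A(u)$. The base case $x = \varepsilon$ is vacuous, since $c_\nn(\tp_a(\varepsilon)) = \emptyset$.

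For the inductive step I distinguish cases on the leading symbol of $x$. If $x = b_j x'$ with $b \in A$, a split on whether $b = a$ and (if not) whether $j = k$ resolves everything by one unfolding of each recursive definition and, when $j \neq k$, the induction hypothesis applied to $x'$. So the substantive case is $x = [^q y ]^q z$, for which I further split on whether $a \in c_A(y)$.

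If $a \notin c_A(y)$, then $\tp_a(x) = [^q y ]^q \tp_a(z)$ and $k \in c_\nn(y) \cup c_\nn(\tp_a(z))$. The sub-case $k \in c_\nn(y)$ unfolds directly: both sides become $\Tt_k(y) [^{q-1} y ]^{q-1} \tp_a(z)$, using $a \notin c_A(y)$ together with the auxiliary identity to push $\tp_a$ past $\Tt_k(y) [^{q-1} y ]^{q-1}$. The sub-case $k \in c_\nn(\tp_a(z)) \setminus c_\nn(y)$ reduces to $\Tt_k(\tp_a(z)) = \tp_a(\Tt_k(z))$, which is the induction hypothesis applied to $z$.

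If instead $a \in c_A(y)$, then $\tp_a(x) = \tp_a(y)$, and the induction hypothesis on $y$ gives $\Tt_k(\tp_a(x)) = \Tt_k(\tp_a(y)) = \tp_a(\Tt_k(y))$. Meanwhile, $k \in c_\nn(\tp_a(y)) \subseteq c_\nn(y)$ forces $\Tt_k(x) = \Tt_k(y) [^{q-1} y ]^{q-1} z$, and Lemma~\ref{5.4} ensures $a \in c_A(\Tt_k(y))$, so the auxiliary identity collapses $\tp_a(\Tt_k(x))$ to $\tp_a(\Tt_k(y))$, matching the other side. The only real obstacle is bookkeeping: tracking which recursive clause applies at each step and invoking Lemma~\ref{5.4} at the right moment to license the use of the auxiliary identity in the final case.
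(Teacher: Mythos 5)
Your proof is correct: the structural induction on the Dyck decomposition of $x$ (letter case, and bracket case $[^q y]^q z$ split on whether $a \in c_A(y)$), supported by the preliminary facts that $\Tt_k$ and $\tp_a$ preserve well-parenthesizedness and that $\tp_a(uv)=\tp_a(u)$ when $a\in c_A(u)$, and with Lemma~\ref{5.4} invoked exactly where needed to collapse $\tp_a(\Tt_k(y)[^{q-1}y]^{q-1}z)$ to $\tp_a(\Tt_k(y))$, goes through in every case. The paper itself omits this proof, deferring to \cite{palavra}, and your argument is essentially the routine structural induction intended there, so no comparison beyond that is needed.
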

\begin{lem}\label{5.7comp}
  Let $\vx = (x_j)_{j\ge 0}$ and $\vy = (y_j)_{j\ge 0}$ be two
  sequences of possibly empty well-parenthesized words over $A \times
  \nn$ such that $x_0y_0 \neq \varepsilon$, and for every $i,j \ge 0$,
  the index $i$ occurs in
  $\pi_\nn(x_0y_0x_1y_1\cdots x_jy_j)$ at most once.
  Let $\vq = (q_j)_{j \ge 0}$ be a sequence of integers. For
  each $n \ge 0$, we define the well-parenthesized words
  $\mu_n(\vx, \vy, \vq)$ and $\xi_n(\vx, \vy, \vq)$ as follows:
  \begin{align*}
     \mu_0(\vx,\vy, \vq) &= x_0y_0 \\
     \mu_{n+1}(\vx, \vy, \vq) &= x_{n+1} [^{q_n} \mu_n(\vx, \vy, \vq)]^{q_n} y_{n+1},
      \text{ if } n \ge 0 \\
     \xi_n(\vx, \vy, \vq) &= [^{q_n-1} \mu_n(\vx, \vy, \vq)]^{q_n-1} y_{n+1},
      \text{ if } n \ge 0.
  \end{align*}
  Let $i$ be a natural number and suppose that $i \in c_{\nn}(x_\ell
  y_\ell)$ for a certain $\ell \ge 0$. Then, for every $n \ge \ell$, the
  following equality holds:
  \begin{align}
    \label{eq:5}
    \Tt_i(\mu_n(\vx,\vy, \vq)) = \Tt_i (\mu_\ell(\vx, \vy, \vq)) \cdot
    \xi_\ell(\vx, \vy, \vq)  \cdot \xi_{\ell+1}(\vx, \vy, \vq) \cdots
    \xi_{n-1}(\vx, \vy, \vq).
  \end{align}
\end{lem}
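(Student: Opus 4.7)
The plan is to proceed by induction on $n - \ell$, unwinding the recursive definition of $\mu_{n+1}(\vx,\vy,\vq)$ and matching it to the appropriate clauses of the tail operator $\Tt_i$.

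For the base case $n = \ell$, the product $\xi_\ell(\vx,\vy,\vq) \cdots \xi_{n-1}(\vx,\vy,\vq)$ on the right-hand side of \eqref{eq:5} is empty, so the identity reduces to $\Tt_i(\mu_\ell(\vx,\vy,\vq)) = \Tt_i(\mu_\ell(\vx,\vy,\vq))$, which is immediate.

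For the inductive step, assume that \eqref{eq:5} holds at level $n$. Two consequences of the uniqueness hypothesis on occurrences of indices in the concatenation $x_0 y_0 x_1 y_1 \cdots$ are needed. First, since $i \in c_\nn(x_\ell y_\ell)$ and $n+1 > \ell$, we have $i \notin c_\nn(x_{n+1})$. Second, a routine secondary induction unwinding the definition of $\mu_n(\vx,\vy,\vq)$ shows that $\mu_n(\vx,\vy,\vq)$ contains both $x_\ell$ and $y_\ell$ as factors, so $i \in c_\nn(\mu_n(\vx,\vy,\vq))$. Applying the third clause in the definition of $\Tt_i$ to the prefix $x_{n+1}$ of
\[
\mu_{n+1}(\vx,\vy,\vq) = x_{n+1} [^{q_n} \mu_n(\vx,\vy,\vq) ]^{q_n} y_{n+1}
\]
yields $\Tt_i(\mu_{n+1}(\vx,\vy,\vq)) = \Tt_i\bigl([^{q_n} \mu_n(\vx,\vy,\vq) ]^{q_n} y_{n+1}\bigr)$, and then the fourth clause (applicable because $i \in c_\nn(\mu_n(\vx,\vy,\vq))$) produces
\[
\Tt_i(\mu_{n+1}(\vx,\vy,\vq)) = \Tt_i(\mu_n(\vx,\vy,\vq)) \cdot [^{q_n - 1} \mu_n(\vx,\vy,\vq)]^{q_n - 1} y_{n+1} = \Tt_i(\mu_n(\vx,\vy,\vq)) \cdot \xi_n(\vx,\vy,\vq).
\]
The induction hypothesis expands $\Tt_i(\mu_n(\vx,\vy,\vq))$ as $\Tt_i(\mu_\ell(\vx,\vy,\vq)) \cdot \xi_\ell(\vx,\vy,\vq) \cdots \xi_{n-1}(\vx,\vy,\vq)$, giving exactly the equality at level $n+1$.

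No substantive obstacle is anticipated; the argument is a mechanical verification. The only point requiring care is matching the clauses of $\Tt_i$ to the structure of $\mu_{n+1}$, ensuring that $x_{n+1}$ is free of $i$ and that the bracketed factor $\mu_n(\vx,\vy,\vq)$ contains $i$, both of which are immediate from the uniqueness assumption combined with the inductive placement of $x_\ell y_\ell$ inside $\mu_n(\vx,\vy,\vq)$.
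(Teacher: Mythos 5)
Your proof is correct and follows essentially the same route as the paper's: induction on $n$ (base case $n=\ell$ trivial because the product of $\xi$'s is empty), then in the inductive step skip the prefix $x_{n+1}$ using $i \notin c_\nn(x_{n+1})$, apply the parenthesis rule $\Tt_i([^q y]^q z) = \Tt_i(y)[^{q-1}y]^{q-1}z$ using $i \in c_\nn(\mu_n(\vx,\vy,\vq))$, and invoke the induction hypothesis; you even justify $i \in c_\nn(\mu_n(\vx,\vy,\vq))$ explicitly, which the paper leaves implicit. One trivial slip: the step discarding $x_{n+1}$ uses the \emph{second} clause of the definition of $\Tt_i$ (namely $\Tt_i(yz)=\Tt_i(z)$ when $i \notin c_\nn(y)$), not the third, though the displayed computation is exactly right.
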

\begin{proof}
  We argue by induction on $n$. If $n = \ell$, then the result holds
  clearly, since the factor $\xi_\ell(\vx, \vy, \vq) \cdot \xi_{\ell+1}(\vx, \vy, \vq) \cdots
    \xi_{n-1}(\vx, \vy, \vq)$ vanishes in \eqref{eq:5}.
  Suppose that $n > \ell$ and that the result holds for any smaller
  $n$. We may compute
  \begin{align*}
    \Tt_i(\mu_n(\vx, \vy, \vq)) &= \Tt_i(x_n[^{q_{n-1}} \mu_{n-1}(\vx,
                                  \vy, \vq)]^{q_{n-1}} y_n)
    \\ & = \Tt_i(\mu_{n-1}(\vx,\vy, \vq)) \cdot [^{q_{n-1}-1}
         \mu_{n-1}(\vx, \vy, \vq)]^{q_{n-1}-1} y_n
    \\ & \hspace{3.5cm} \text{ since } i \notin c_{\nn}(x_n)\text{ and } i \in c_{\nn}(\mu_{n-1}(\vx,\vy,\vq))
    \\ & = \Tt_i(\mu_{n-1}(\vx,\vy, \vq)) \cdot \xi_{n-1}(\vx,\vy,\vq)
    \\ & =  \Tt_i (\mu_\ell(\vx, \vy, \vq)) \hspace{3cm}\text{ by induction
         hypothesis}
    \\ & \quad \cdot \xi_\ell(\vx, \vy, \vq) \cdots \xi_{n-2}(\vx,
         \vy, \vq) \cdot \xi_{n-1}(\vx,\vy,\vq)
  \end{align*}
  obtaining the desired equality \eqref{eq:5}.
\end{proof}

By successively applying Lemma~\ref{5.7comp}, we obtain the following:

\begin{cor}
  \label{c:1}
  Using the same notation and assuming the same hypothesis
  as in the previous lemma, suppose that $k \in c_\nn(y_0)$. Then,
  \begin{enumerate}
  \item\label{item:1} if $i \in c_\nn(x_\ell)$ for a certain $\ell \ge 0$, then the
    equality
    $$\Tt_k(\Tt_i(\mu_n(\vx, \vy, \vq))) = \Tt_k(y_0)\cdot \xi_0(\vx,
    \vy, \vq)\cdot \xi_1(\vx, \vy, \vq)\cdots \xi_{n-2}(\vx, \vy,
    \vq)\cdot \xi_{n-1}(\vx, \vy, \vq)$$
    holds for every $n \ge \ell$;
  \item\label{item:2} if $i \in c_\nn(y_\ell)$ for a certain $\ell \ge 1$, then the
    equality
    \begin{align*}
    \Tt_k(\Tt_i(\mu_n(\vx, \vy, \vq)))
    & = \Tt_k(y_0)\cdot \xi_0(\vx, \vy, \vq)\cdot \xi_1(\vx, \vy,
      \vq)\cdots \xi_{\ell-1}(\vx, \vy, \vq)
      \\ & \quad \cdot [^{q_\ell-2}\mu_\ell(\vx, \vy,
  \vq)]^{q_\ell-2}y_{\ell+1}\cdot \xi_{\ell+1}(\vx, \vy, \vq)\cdots
  \xi_{n-1}(\vx, \vy, \vq)
  \end{align*}
    holds for every $n \ge \ell$.\qed
  \end{enumerate}
\end{cor}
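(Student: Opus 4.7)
The plan is to apply Lemma~\ref{5.7comp} twice: first to rewrite $\Tt_i(\mu_n)$ in terms of $\Tt_i(\mu_\ell)$, and later to handle $\Tt_k(\mu_{\ell-1})$ or $\Tt_k(\mu_\ell)$. In between, the step is to evaluate $\Tt_i(\mu_\ell)$ directly from the recursive definition and then peel off the parts that do not contain $k$ before descending into the bracket that does.

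By Lemma~\ref{5.7comp} applied to the index $i$, in both cases we have
\[
\Tt_i(\mu_n(\vx,\vy,\vq)) = \Tt_i(\mu_\ell(\vx,\vy,\vq))\cdot \xi_\ell(\vx,\vy,\vq)\cdots \xi_{n-1}(\vx,\vy,\vq).
\]
Since each index of $\nn$ occurs at most once in $x_0y_0\cdots x_\ell y_\ell$, the hypothesis $i \in c_\nn(x_\ell)$ (case (a)) confines $i$ to the $x_\ell$-block of $\mu_\ell$, while $i \in c_\nn(y_\ell)$ (case (b)) confines it to the trailing $y_\ell$-block. In case (a) the defining clauses of $\Tt_i$ then yield $\Tt_i(\mu_\ell) = \Tt_i(x_\ell)\cdot r_\ell$, where $r_\ell = y_0$ if $\ell = 0$ and $r_\ell = [^{q_{\ell-1}}\mu_{\ell-1}]^{q_{\ell-1}} y_\ell$ if $\ell \ge 1$; in case (b) the clause $\Tt_i(yz) = \Tt_i(z)$, applied with $y = x_\ell[^{q_{\ell-1}}\mu_{\ell-1}]^{q_{\ell-1}}$ and $z = y_\ell$, collapses $\Tt_i(\mu_\ell)$ to $\Tt_i(y_\ell)$. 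A small auxiliary observation, verified by induction on the recursive definition, is that $\Tt$ preserves well-parenthesization, so each of these factors is itself well-parenthesized and may be further decomposed.

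For case (a), uniqueness of indices gives $k \notin c_\nn(x_\ell)$, hence $k$ is absent from $\Tt_i(x_\ell)$, and the clause $\Tt_k(yz) = \Tt_k(z)$ strips this prefix to leave $\Tt_k(\Tt_i(\mu_n)) = \Tt_k(r_\ell)\cdot \xi_\ell \cdots \xi_{n-1}$. If $\ell = 0$ this already reads $\Tt_k(y_0)\cdot \xi_0\cdots \xi_{n-1}$. If $\ell \ge 1$, applying the bracket clause to $[^{q_{\ell-1}}\mu_{\ell-1}]^{q_{\ell-1}} y_\ell$ (using $k \in c_\nn(\mu_{\ell-1})$ and $k \notin c_\nn(y_\ell)$) produces $\Tt_k(\mu_{\ell-1}) \cdot \xi_{\ell-1}$, and a second invocation of Lemma~\ref{5.7comp} for the index $k$ (the lemma's $\ell$ being $0$, which is legitimate because $k \in c_\nn(y_0) \subseteq c_\nn(\mu_0)$ and $\Tt_k(\mu_0) = \Tt_k(y_0)$) gives $\Tt_k(\mu_{\ell-1}) = \Tt_k(y_0)\cdot \xi_0 \cdots \xi_{\ell-2}$. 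Concatenation closes the identity.

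For case (b), since $k \notin c_\nn(y_\ell)$ we also have $k \notin c_\nn(\Tt_i(y_\ell))$, and the same stripping reduces $\Tt_k(\Tt_i(\mu_n))$ to $\Tt_k(\xi_\ell\,\xi_{\ell+1}\cdots \xi_{n-1})$. The leading factor $\xi_\ell = [^{q_\ell-1}\mu_\ell]^{q_\ell-1} y_{\ell+1}$ contains $k$ inside its outer bracket and not in $y_{\ell+1}$, so the bracket clause of $\Tt_k$ contributes $\Tt_k(\mu_\ell)\cdot [^{q_\ell-2}\mu_\ell]^{q_\ell-2} y_{\ell+1}$, and a final invocation of Lemma~\ref{5.7comp} with index $k$ produces $\Tt_k(\mu_\ell) = \Tt_k(y_0)\xi_0 \cdots \xi_{\ell-1}$, yielding the stated expression. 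The principal obstacle throughout is purely bookkeeping: choosing the minimal level at each invocation of Lemma~\ref{5.7comp} and checking that each factor lying to the left of the active index is well-parenthesized, so that the defining clauses of $\Tt$ may be legitimately applied.
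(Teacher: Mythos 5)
Your proposal is correct and follows exactly the route the paper intends: the paper dispatches this corollary with the single remark that it follows ``by successively applying Lemma~\ref{5.7comp}'', and your argument is precisely that double application (once for the index $i$, once for $k$), interleaved with the defining clauses of $\Tt$ and the uniqueness-of-indices hypothesis, with the well-parenthesization bookkeeping made explicit. Nothing is missing; you have simply written out the details the paper leaves to the reader.
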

The reader may wish to compare the next result with \cite[Lemma 5.8]{palavra}.
\begin{lem}\label{5.8}
  Let $w$ be a $\overline\kappa$-term, $i \ge 0$, and $a \in c(w)$. Assume
  that $b_k$ is the principal marker of $\overline{w}(i,a)$. Then, the
  following properties hold:
  \begin{enumerate}
  \item $\tp_b(\overline{w}(i,a)) = \overline{w}(i,b)$;
  \item\label{wia} $\DRH$ satisfies $\eta(\Tt_k(\overline{w}(i,a))) \Req w(k,a)$.
  \end{enumerate}
  Moreover, if the projection of $w(i,a)$ onto $\pseudo A{DRH}$ is not
  regular, then the relation in \ref{wia} becomes an equality in
  $\pseudo AS$.
\end{lem}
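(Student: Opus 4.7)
For Part~(1), the argument should be immediate from Lemma~\ref{5.3}: since $b_k$ is in particular a marker of $\overline{w}(i,a) = \tp_a(\Tt_i(\w))$, we have $b \in c_A(\tp_a(\Tt_i(\w)))$, and Lemma~\ref{5.3} applied with $x = \Tt_i(\w)$ yields
$$\tp_b(\overline{w}(i,a)) = \tp_b(\tp_a(\Tt_i(\w))) = \tp_b(\Tt_i(\w)) = \overline{w}(i,b).$$

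For Part~(2), my first step is to apply Lemma~\ref{5.5} in order to exchange the outer tail with the prefix operation. The hypothesis $k \in c_\nn(\tp_a(\Tt_i(\w)))$ holds because $b_k$ is a marker of $\tp_a(\Tt_i(\w))$, so Lemma~\ref{5.5} gives
$$\Tt_k(\overline{w}(i,a)) = \tp_a\bigl(\Tt_k(\Tt_i(\w))\bigr),$$
reducing the task to relating $\Tt_k(\Tt_i(\w))$ with $\Tt_k(\w)$. To handle this, I would decompose $\w$ in the form $\mu_n(\vx, \vy, \vq)$ provided by Lemma~\ref{5.7comp}, aligning the decomposition with the nesting of brackets around position $i$ in $\w$. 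Corollary~\ref{c:1} then furnishes an explicit formula for $\Tt_k(\Tt_i(\w))$ as $\Tt_k(\w)$ followed by a trailing product of factors of the form $\xi_j$ (respectively $[^{q_\ell-2}\mu_\ell(\vx,\vy,\vq)]^{q_\ell-2} y_{\ell+1} \cdot \xi_{\ell+1} \cdots \xi_{n-1}$), according to whether $i$ lies in $c_\nn(x_\ell)$ or $c_\nn(y_\ell)$.

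To finish modulo $\DRH$, I would observe that every letter appearing in those trailing factors comes from one of the bracket contents iterated around position $i$, hence belongs to $\cum{w(k,a)}$. After applying $\tp_a$ and $\eta$, the two sides thus differ by a right multiplicative factor whose content lies in the cumulative content of the common prefix, so Corollary~\ref{c:2} delivers the required $\Req$-equivalence. For the \emph{non-regular} statement, the hypothesis $c(w(i,a)) \neq \cum{w(i,a)}$ forces the principal marker $b_k$ to lie outside every bracket that $\Tt_i$ iterates; otherwise the cumulative content of $w(i,a)$ would fill out its content. In that scenario the trailing factors produced by Corollary~\ref{c:1} degenerate to the empty word, so $\Tt_k(\Tt_i(\w)) = \Tt_k(\w)$ as well-parenthesized words and the $\Req$-relation becomes equality in $\pseudo AS$.

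\textbf{Main obstacle.} The crucial bookkeeping will be choosing the decomposition $\w = \mu_n(\vx,\vy,\vq)$ that mirrors the nesting of position $i$, and verifying rigorously that the trailing factors exposed by Corollary~\ref{c:1} contain only letters of $\cum{w(k,a)}$. A further subtlety is that $\tp_a$ may dive into brackets (discarding them when $a$ is inside), so the interaction of $\tp_a$ with the trailing factors must be analyzed case by case, using Lemma~\ref{5.4} to keep track of whether $a$ occurs in $\Tt_k(\w)$ or only in the trailing factor. Translating the hypothesis of non-regularity over $\DRH$ into the structural claim that $b_k$ sits outside the iterated brackets of $\Tt_i$ is the subtlest point of the whole argument.
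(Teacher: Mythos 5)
Your part~(1) and the skeleton of your $\Req$-equivalence argument do follow the paper's route: Lemma~\ref{5.5} to rewrite $\Tt_k(\w(i,a))$ as $\tp_a(\Tt_k(\Tt_i(\w)))$, a decomposition $\w = \mu_n(\vx,\vy,\vq)$ fed into Corollary~\ref{c:1}, and a content argument to absorb the discrepancy. Two corrections on that part, though. The discrepancy between $\Tt_k(\Tt_i(\w))$ and $\Tt_k(\w)$ is not a suffix but an exponent sitting in the middle of the word ($[^{q_\ell-2}\mu_\ell]^{q_\ell-2}$ versus $[^{q_\ell-1}\mu_\ell]^{q_\ell-1}$), and Corollary~\ref{c:2} cannot ``deliver'' the $\Req$-equivalence: it yields an \emph{equality} over $\DRH$ and needs the two tails to agree over $\h$, which fails here (they are $T$ and $\eta(\mu_\ell)T$). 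The fact you actually need is that factors whose content lies in the cumulative content of a prefix do not change its $\Req$-class (Lemma~\ref{sec:13}). Moreover, the inclusion of the trailing contents in $c(\mu_\ell)$ is not because those letters ``come from brackets iterated around position $i$'' --- the factors $\xi_j$, $j>\ell$, involve $\mu_j$ and $y_{j+1}$, whose contents can a priori exceed $c(\mu_\ell)$ --- but is forced precisely by $b_k$ being the \emph{principal} marker; you flag this, but the reason you sketch is not the right one.

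The genuine gap is in the ``moreover'' statement. Your claim that non-regularity of $w(i,a)$ forces $b_k$ to lie outside every bracket containing $i$, whence $\Tt_k(\Tt_i(\w)) = \Tt_k(\w)$ as well-parenthesized words, is false. Take $w = f(cb^{\omega-1}de)^{\omega-1}$, so that $\w = 0_0f_1[^{-1}z]^{-1}\#_6$ with $z = c_2[^{-1}b_3]^{-1}d_4e_5$, and take $i = 5$, $a = d$. Then $\Tt_5(\w) = [^{-2}z]^{-2}\#_6$ and $\w(5,d) = \tp_d(\Tt_5(\w)) = c_2[^{-1}b_3]^{-1}$, whose principal marker is $b_3$ and whose value $cb^{\omega-1}$ is \emph{not} regular over $\DRH$ (its cumulative content is $\{b\} \subsetneqq \{b,c\}$). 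Yet $b_3$ lies inside the bracket $[^{-1}z]^{-1}$, which contains position $5$; and indeed $\Tt_3(\Tt_5(\w)) = [^{-2}b_3]^{-2}d_4e_5[^{-3}z]^{-3}\#_6$ differs from $\Tt_3(\w) = [^{-2}b_3]^{-2}d_4e_5[^{-2}z]^{-2}\#_6$ in the exponent of the copied bracket, so the tails do not coincide. The asserted equality $\eta(\Tt_3(\w(5,d))) = w(3,d)$ nevertheless holds (both sides are $\eta([^{-2}b_3]^{-2}) = b^{\omega-2}$), but for a reason your plan does not contain: the first occurrence of $a = d$ lies \emph{inside} the bracket content $z$, so $\tp_d$ discards the copied bracket together with its exponent (recall $\tp_a([^q y]^q z') = \tp_a(y)$ when $a \in c_A(y)$), erasing the discrepancy. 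This is the dichotomy the paper runs: whether one gets equality in $\pseudo AS$ or only $\Req$-equivalence is governed by where the first occurrence of $a$ falls (before the copied bracket, inside $\mu_\ell$, or after it), and only in the last case --- when the full power $[^{q_\ell-1}\mu_\ell]^{q_\ell-1}$ survives inside $\w(i,a)$ --- is regularity of $w(i,a)$ forced. Recasting non-regularity as a constraint on the position of $b_k$ alone cannot work, so your proof of the final assertion would fail on examples like the one above.
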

\begin{proof}
  By definition, we have $\overline{w}(i,a) =
  \tp_a(\Tt_i(\overline{w}))$. Since $b 
  \in c_A(\overline{w}(i,a))$, it follows from Lemma \ref{5.3} that
  $\tp_b(\overline{w}(i,a)) = \tp_b(\tp_a(\Tt_i(\overline{w}))) =
  \tp_b(\Tt_i(\overline{w})) = \overline{w}(i,b)$.

  Let us prove the second assertion.
  By definition of $\overline{w}$, we
  know that $b_k$ appears exactly once in $\overline{w}$ and the same
  happens with the index $i$. Let $\w = x \cdot b_k \cdot y$. We
  distinguish the cases where $x$ and $y$ are both possibly empty
  well-parenthesized words and where neither of $x$ nor $y$ is a
  well-parenthesized word.
  In the first case, since $b_k \in c(\overline{w}(i,a)) \subseteq
  c(\Tt_i(\overline{w}))$, the index $i$ must belong to
  $c_{\nn}(x)$. So, we get
  $\Tt_k(\overline{w}(i,a)) = \Tt_k(\tp_a(\Tt_i(\overline{w}))) =
  \Tt_k(\tp_a(\Tt_i(x)b_ky))$. Should $a$ occur in $\Tt_i(x)b_k$,
  then $b_k$ would not appear in $\overline{w}(i,a)$. So, it follows
  that
  \begin{align}
    \label{eq:4}
    \Tt_k(\tp_a(\Tt_i(x)b_ky)) = \Tt_k(\Tt_i(x)b_k\tp_a(y)) =
    \tp_a(y).
  \end{align}
  On the other hand, we have the equalities
  $\overline{w}(k,a) = \tp_a(\Tt_k(\overline{w})) = \tp_a(y) \just
  = {\eqref{eq:4}} \Tt_k(\w(i,a))$,
  and so the desired relation \ref{wia} follows.
  
  Now, we suppose that
  \begin{align*}
    x & = x_n [^{q_{n-1}} x_{n-1} \cdots
        [^{q_1}x_1 [^{q_0} x_0,
    \\ b_k y & = y_0]^{q_0}y_{1}]^{q_{1}}\cdots
               y_{n-1}]^{q_{n-1}} y_n,
  \end{align*}
  where all the $x_j$'s and $y_j$'s are
  possibly empty
  well-parenthesized words, for $j = 0, \ldots, n$.
  We note that, since $k \in c_{\nn}(\w(i,a)) =
  c_{\nn}(\tp_a(\Tt_i(\w)))$, Lemma \ref{5.5} yields the equalities
  \begin{equation}
    \Tt_k(\w(i,a)) = \Tt_k(\tp_a(\Tt_i(\w))) =
    \tp_a(\Tt_k(\Tt_i(\w))).\label{eq:11}
  \end{equation}
  With that in mind, we start by computing the elements $\Tt_k(\w)$ and
  $\Tt_k(\Tt_i(\w))$.
  Let
  \begin{align*}
    \vx &= (x_0, x_1, \ldots, x_n, \varepsilon, \varepsilon, \ldots);
    \\  \vy &= (y_0, y_1, \ldots, y_n, \varepsilon, \varepsilon, \ldots );
    \\  \vq &= (q_0, q_1, \ldots, q_{n-1}, 0, 0, \ldots)
  \end{align*}
  and let $\ell \in \{0, 1, \ldots, n\}$ be such that $i \in
  c_{\nn}(x_\ell y_\ell)$.
  Noticing that $\w = \mu_n(\vx,\vy, \vq)$, $k$ belongs to $ c_{\nn}(y_0)$, and
  using Lemma \ref{5.7comp}  we obtain
  \begin{align}
    \Tt_k(\w) &= \Tt_k (\mu_0(\vx, \vy, \vq)) \cdot
                \xi_0(\vx, \vy, \vq)  \cdot \xi_{1}(\vx, \vy, \vq) \cdots
                \xi_{n-1}(\vx, \vy, \vq)\nonumber
    \\ & = \Tt_k(y_0) \cdot \xi_0(\vx, \vy, \vq)  \cdot \xi_{1}(\vx,
         \vy, \vq) \cdots \xi_{n-1}(\vx, \vy, \vq)\label{eq:45}
  \end{align}
  Now, we have two possible situations.
  \begin{enumerate}[label = (\roman*)]
  \item \label{xl} $i \in c_{\nn}(x_\ell)$, for a certain $\ell
    \in \{0, \ldots, n\}$;
  \item \label{yl} $i \in c_{\nn}(y_\ell)$, for a certain $\ell
    \in \{n, \ldots, 1\}$.
  \end{enumerate}
  If we are in Case \ref{xl}, then we may use Corollary \ref{c:1}\ref{item:1} and
  get
  \begin{align*}
    \Tt_k(\Tt_i(\w)) = \Tt_k(y_0) \cdot \xi_0(\vx, \vy, \vq)  \cdot
    \xi_{1}(\vx, \vy, \vq) \cdots  \xi_{n-2}(\vx, \vy, \vq)
    \cdot \xi_{n-1}(\vx, \vy, \vq).
  \end{align*}
  Hence, we have an equality between
  $\Tt_k(\w(i,a)) =\tp_a(\Tt_k(\Tt_i(\w)))$ and $\w(k,a) =
  \tp_a(\Tt_k(\w))$, thereby proving \ref{wia}.
  
  On the other hand, when the situation occurring is \ref{yl}, 
  Corollary \ref{c:1}\ref{item:2} yields
  \begin{align*}
    \Tt_k(\Tt_i(\w))
    & = \Tt_k(y_0) \cdot \xi_0(\vx, \vy, \vq)  \cdot
      \xi_{1}(\vx, \vy, \vq) \cdots  \xi_{\ell-1}(\vx, \vy, \vq)
    \\ & \quad \cdot [^{q_{\ell}-2} \mu_{\ell}(\vx, \vy,
         \vq)]^{q_{\ell}-2}  y_{\ell+1}  \cdot
         \xi_{\ell+1}(\vx, \vy, \vq) \cdots  \xi_{n-1}(\vx, \vy, \vq).
  \end{align*}
  If the first occurrence of $a$ in $\Tt_k(\Tt_i(\w))$ is in
  $$\Tt_k(y_0) \cdot \xi_0(\vx, \vy, \vq)  \cdot \xi_{1}(\vx, \vy,
  \vq) \cdots  \xi_{\ell-1}(\vx, \vy, \vq)$$ or in $\mu_{\ell}(\vx, \vy,
  \vq)$, then the first occurrence of $a$ in $\Tt_k(\w)$ is also in
  one of these factors and we easily conclude that
  \begin{align*}
    \tp_a(\Tt_k(\Tt_i(\w)))
    & = \tp_a(\Tt_k(y_0) \cdot \xi_0(\vx, \vy,
      \vq)  \cdot \xi_{1}(\vx, \vy, \vq) \cdots  \xi_{\ell-1}(\vx, \vy,
      \vq)\cdot \mu_{\ell}(\vx, \vy, \vq))
    \\ & = \tp_a(\Tt_k(\w)),
  \end{align*}
  thereby proving again an equality in \ref{wia}.
  
  Otherwise, the first occurrence of $a$ in $\Tt_k(\Tt_i(\w))$ is in
  $$y_{\ell+1}  \cdot \xi_{\ell+1}(\vx, \vy, \vq) \cdots  \xi_{n-1}(\vx,
  \vy, \vq).$$
  Analyzing the equality \eqref{eq:45}, we deduce that $a$
  occurs for the first time in $\Tt_k(\w)$ also in the factor
  $y_{\ell+1}  \cdot \xi_{\ell+1}(\vx, \vy, \vq) \cdots  \xi_{n-1}(\vx,
  \vy, \vq)$.
  Then, we may compute
  \begin{align}
    \tp_a(\Tt_k(\Tt_i(\w)))  &= \Tt_k(y_0) \cdot \xi_0(\vx, \vy,
      \vq)  \cdot \xi_{1}(\vx, \vy, \vq) \cdots  \xi_{\ell-1}(\vx,
      \vy, \vq) \nonumber
    \\ & \quad \cdot [^{q_{\ell}-2} \mu_{\ell}(\vx, \vy,
      \vq)]^{q_{\ell}-2}\cdot\tp_a( y_{\ell+1}  \cdot
         \xi_{\ell+1}(\vx, \vy, \vq) \cdots  \xi_{n-1}(\vx, \vy,
         \vq))\label{eq:46}
    \\  \tp_a(\Tt_k(\w)) &=  \Tt_k(y_0) \cdot \xi_0(\vx, \vy,
         \vq)  \cdot \xi_{1}(\vx, \vy, \vq) \cdots  \xi_{\ell-1}(\vx,
         \vy, \vq) \nonumber
    \\ & \quad\cdot [^{q_{\ell}-1} \mu_{\ell}(\vx, \vy,
         \vq)]^{q_{\ell}-1}\cdot
         \tp_a( y_{\ell+1}  \cdot
         \xi_{\ell+1}(\vx, \vy, \vq) \cdots  \xi_{n-1}(\vx, \vy,
         \vq)). \label{eq:59}
  \end{align}
  Moreover, using again Lemma \ref{5.7comp}, we obtain
  \begin{align}
    \w(i,a) &= \tp_a(\Tt_i(\w)) = \tp_a(\Tt_i(\mu_n(\vx, \vy,
              \vq)))\nonumber
    \\ & =  \tp_a(\Tt_i (y_\ell)) \cdot
         \xi_\ell(\vx, \vy, \vq)  \cdot \xi_{\ell+1}(\vx, \vy, \vq)
         \cdots \xi_{n-1}(\vx, \vy, \vq)\nonumber
    \\ & = \Tt_i (y_\ell)[^ {q_\ell-1}\mu_\ell(\vx, \vy,
         \vq)]^{q_\ell-1}\nonumber
         \tp_a(
         y_{\ell+1}\xi_{\ell+1}(\vx, \vy, \vq)
         \cdots \xi_{n-1}(\vx, \vy, \vq)) \nonumber
    \\ & =  \Tt_i (y_\ell)[^ {q_\ell-1}
         x_\ell[^{q_{\ell-1}}x_{\ell-1} [^{q_{\ell-2}}\cdots
         [^{q_0}x_0y_0]^{q_0}
         \cdots]^{q_{\ell-2}}y_{\ell-1}]^{q_{\ell-1}}y_\ell]^{q_\ell-1}\nonumber 
    \\ & \quad \cdot\tp_a(
         y_{\ell+1}\xi_{\ell+1}(\vx, \vy, \vq)
         \cdots \xi_{n-1}(\vx, \vy, \vq))
         \label{eq:47}
  \end{align}
  Since $b_k$ is the principal marker of $\w(i,a)$, we know that the
  following inclusion holds:
  $$c_A(y_0 y_1 \cdots y_\ell \cdot \tp_a( y_{\ell+1}  \cdot
  \xi_{\ell+1}(\vx, \vy, \vq) \cdots  \xi_{n-1}(\vx, \vy, \vq))
  ) \subseteq c_A(\Tt_i(y_\ell)x_\ell\cdots x_0 b_k).$$
  Also,  by definition of $\mu_\ell(\vx, \vy, \vq)$, we have an
  inclusion
  $$c_A(\Tt_i(y_\ell)x_\ell\cdots x_0 b_k) \subseteq
  c_A(\mu_\ell(\vx,\vy,\vq)).$$
  Consequently, we obtain
  $$c_A( \tp_a( y_{\ell+1}  \cdot
  \xi_{\ell+1}(\vx, \vy, \vq) \cdots  \xi_{n-1}(\vx, \vy, \vq))) \subseteq
  c_A(\mu_\ell(\vx,\vy,\vq)).$$
  Observing that
  \begin{align}
    \cum{\eta([^{q_\ell-2} \mu_\ell(\vx,\vy,\vq)]^{q_\ell-2})} = c(\eta(\tp_a( y_{\ell+1}  \cdot
         \xi_{\ell+1}(\vx, \vy, \vq) \cdots  \xi_{n-1}(\vx, \vy,
         \vq)))),\label{eq:12}
  \end{align}
  we end up with the desired relations, which are valid in $\DRH$:
  \begin{align*}
    \eta(\Tt_k(\w(i,a)))
    & \just = {\eqref{eq:11},\eqref{eq:46}} \eta(\Tt_k(y_0) \cdot
      \xi_0(\vx, \vy, \vq)
    \cdot   \xi_{1}(\vx, \vy, \vq) \cdots
      \xi_{\ell-1}(\vx, \vy,\vq) )
    \\ & \kern15pt\quad \cdot \eta([^{q_{\ell}-2} \mu_{\ell}(\vx, \vy,
         \vq)]^{q_{\ell}-2} )
    \\ & \kern15pt\quad\cdot \eta(\tp_a( y_{\ell+1}  \cdot
         \xi_{\ell+1}(\vx, \vy, \vq) \cdots  \xi_{n-1}(\vx, \vy, \vq)))
    \\ & \kern10pt\just {\Req}{\eqref{eq:12}} \eta(\Tt_k(y_0) \cdot
         \xi_0(\vx, \vy, \vq)  \cdot 
         \xi_{1}(\vx, \vy, \vq) \cdots  \xi_{\ell-1}(\vx, \vy, \vq))
    \\ &\kern15pt\quad \cdot \eta(\mu_\ell(\vx, \vy,
         \vq))^{\omega+q_\ell-1}
    \\ &\kern15pt\quad\cdot \eta(\tp_a( y_{\ell+1}  \cdot 
         \xi_{\ell+1}(\vx, \vy, \vq) \cdots  \xi_{n-1}(\vx, \vy, \vq)))
    \\ & \kern10pt\just = {\eqref{eq:59}} \eta(\w(k,a)) = w(k,a). 
  \end{align*}
  We finally observe that we actually proved an equality in $\pseudo
  AS$ rather than a
  relation modulo $\DRH$, except in the last situation. But that
  scenario only occurs when $w(i,a)$ is regular modulo $\DRH$.
  Indeed, since $b_k \in c(y_0)$ is the principal marker of $w(i,a)$,
  from the equality \eqref{eq:47}, we may deduce that $\cum{w(i,a)} =
  c(w(i,a))$, which by Proposition~\ref{c:8} implies that $\rho_{\DRH}(w(i,a))$
  is regular.
\end{proof}

For a well-parenthesized word $x$ over $A\times \nn$, we consider the following property:
\begin{equation}
  \tag{$H(x)$}
  \label{hx}
  \forall a,b \in A, \quad \forall i \in \nn, \quad a_i,b_i \in c(x)
  \implies a = b
\end{equation}
The proof of the next result may be easily adapted from the proof of~\cite[Lemma 5.9]{palavra}.
\begin{lem}\label{5.9}
  Let $x \in \dyck{A \times \nn} \setminus \{\varepsilon\}$ satisfy
  \eqref{hx} and suppose
  that $a_i$ is a marker of $x$. Then the equality $\eta(x) =
  \eta(\tp_a(x) \cdot a_i \cdot \Tt_i(x))$ holds.
\end{lem}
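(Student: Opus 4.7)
The plan is to prove the identity by induction on the length of $x$, or equivalently on the nesting structure of~$x$. The base case is when $x$ is flat, i.e., $x \in (A\times\nn)^+$. Since $a_i$ is a marker of~$x$, one may write $x = y \cdot a_i \cdot z$ with $a \notin c_A(y)$; property~\eqref{hx} forces $i \notin c_\nn(y)$ (otherwise some $b_i$ with $b \neq a$ would also appear in~$x$, contradicting \eqref{hx}). Iterating the defining clauses of $\tp_a$ and $\Tt_i$ along~$y$ (which is itself well-parenthesized) then yields $\tp_a(x) = y$ and $\Tt_i(x) = z$, whence $\eta(x) = \eta(\tp_a(x) \cdot a_i \cdot \Tt_i(x))$ is immediate.

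For the inductive step, I decompose $x$ according to its top-level structure as a concatenation of letters in $A\times\nn$ and top-level parenthesized blocks, and distinguish two subcases according to where the marker $a_i$ sits. If $a_i$ appears at the top level, then $x = x_\ell \cdot a_i \cdot x_r$ with $x_\ell, x_r \in \dyck{A\times\nn}$; since $a \notin c_A(x_\ell)$ (marker property) and $i \notin c_\nn(x_\ell)$ (by~\eqref{hx}), the recursive clauses give $\tp_a(x) = x_\ell$ and $\Tt_i(x) = x_r$ directly, and the claim follows without appeal to the induction hypothesis.

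The main case is when $a_i$ lies strictly inside some top-level block, so that $x = u \cdot [^q y]^q \cdot v$ with $u, v, y \in \dyck{A\times\nn}$ and $a_i \in c(y)$. Then $a \notin c_A(u)$ and $a \in c_A(y)$ (and likewise for the index~$i$), so the defining clauses yield
\[
\tp_a(x) = u \cdot \tp_a(y) \qquad \text{and} \qquad \Tt_i(x) = \Tt_i(y) \cdot [^{q-1} y]^{q-1} \cdot v.
\]
Noting that $a_i$ remains a marker of~$y$ (because $a$ does not occur in the prefix of~$y$ preceding $a_i$) and that $y$ inherits property~\eqref{hx}, the induction hypothesis applies to~$y$ and gives $\eta(y) = \eta(\tp_a(y)\cdot a_i \cdot \Tt_i(y))$. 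Combining this with the elementary identity $\eta(y) \cdot \eta(y)^{\omega+q-1} = \eta(y)^{\omega+q}$ valid in~$\pseudok AS$, I compute
\[
\eta(\tp_a(x) \cdot a_i \cdot \Tt_i(x)) = \eta(u) \cdot \eta(y) \cdot \eta(y)^{\omega+q-1} \cdot \eta(v) = \eta(u) \cdot \eta(y)^{\omega+q} \cdot \eta(v) = \eta(x),
\]
closing the induction. The only real obstacle is a piece of bookkeeping: one must observe that the ``unwinding'' step $[^q y]^q \mapsto [^{q-1} y]^{q-1}$ built into the definition of~$\Tt_i$ provides precisely the extra factor of~$\eta(y)$ needed to reassemble $\eta(y)^{\omega+q}$ after the induction hypothesis extracts one copy of~$\eta(y)$ from the leftmost block.
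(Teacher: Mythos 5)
Your proof is correct and is essentially the intended one: the paper itself omits the argument, deferring to \cite[Lemma 5.9]{palavra}, and the proof there (adapted to the exponents $q$) is exactly your structural induction on the position of the marker, with the key identity $s\cdot s^{\omega+q-1}=s^{\omega+q}$ in $\pseudok AS$ absorbing the unwound block $[^{q-1}y]^{q-1}$ that the clause for $\Tt_i$ produces. The one delicate point — using \eqref{hx} to ensure $i\notin c_\nn(\cdot)$ of every $a$-free prefix, so that $\tp_a$ and $\Tt_i$ cut at the same occurrence — is handled correctly in your write-up.
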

\begin{cor}\label{5.11}
  Let $w$ be a $\overline\kappa$-term. Let $i \in \nn$ and $a \in A \uplus\{\#\}$,
  and let $b_k$ be the principal marker of $\overline{w}(i,a)$. Suppose
  that $\lbf{w(i,a)} = (w_\ell, m, w_r)$. Then, $m = b$ and~$\DRH$
  satisfies $w_\ell = w(i,b)$, and $ w_r \Req w(k,a)$.
  Moreover, if $\rho_\DRH(w(i,a))$ is not regular, then
  $\lbf{w(i,a)} = (w(i,b), b, w(k,a))$.
\end{cor}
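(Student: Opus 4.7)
The plan is to combine Lemmas~\ref{5.8} and~\ref{5.9} with the uniqueness of the left basic factorization (Propositions~\ref{p:4} and~\ref{p:1}) in order to read off the three components of $\lbf{w(i,a)}$ directly from the structure of $\overline w(i,a)$.

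First I would apply Lemma~\ref{5.9} to $x = \overline w(i,a)$ with marker $b_k$. The hypothesis \eqref{hx} holds because $\overline w$ is built by the functions $p_k$, which assign pairwise distinct indices to the letters. Lemma~\ref{5.9} then yields the factorization
\begin{equation*}
  w(i,a) = \eta(\overline w(i,a)) = \eta\bigl(\tp_b(\overline w(i,a))\bigr) \cdot b \cdot \eta\bigl(\Tt_k(\overline w(i,a))\bigr)
\end{equation*}
in $\pseudo AS$. By Lemma~\ref{5.8}(a), the first factor equals $\eta(\overline w(i,b)) = w(i,b)$, and by Lemma~\ref{5.8}(b), the last factor is $\Req$-equivalent to $w(k,a)$ modulo $\DRH$.

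Next I would check that this factorization already meets the defining conditions of the left basic factorization. By the very definition of $\tp_b$ we have $b\notin c_A(\tp_b(\overline w(i,a))) = c_A(\overline w(i,b))$, hence $b\notin c(w(i,b))$. On the other hand, the assumption that $b_k$ is the \emph{principal} marker of $\overline w(i,a)$ means that every letter occurring after $b_k$ already occurs in the prefix $\tp_b(\overline w(i,a))\, b_k$, which translates into the content equality $c(w(i,b)\cdot b) = c(w(i,a))$. Projecting the above factorization onto $\pseudo A\DRH$, Proposition~\ref{p:1} delivers a unique triple satisfying these two properties, so comparing with the hypothesis $\lbf{w(i,a)} = (w_\ell, m, w_r)$ forces $m = b$, $w_\ell =_\DRH w(i,b)$ and $w_r =_\DRH \eta(\Tt_k(\overline w(i,a)))$, and then Lemma~\ref{5.8}(b) gives $w_r \Req w(k,a)$ modulo $\DRH$.

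For the moreover part, when $\rho_\DRH(w(i,a))$ is not regular, Lemma~\ref{5.8}(b) strengthens the $\Req$-relation to a genuine equality $\eta(\Tt_k(\overline w(i,a))) = w(k,a)$ in $\pseudo AS$. The factorization $w(i,a) = w(i,b)\cdot b \cdot w(k,a)$ therefore holds in $\pseudo AS$ and still satisfies the two content conditions above, so the uniqueness part of Proposition~\ref{p:4} identifies it with $\lbf{w(i,a)}$. The only mild obstacle I anticipate is a bookkeeping one: making sure at each step that the equalities involving tails and prefixes are asserted in the correct ambient semigroup ($\pseudo AS$ versus $\pseudo A\DRH$), so that uniqueness of the left basic factorization is invoked over the pseudovariety matching the strength of the available equality.
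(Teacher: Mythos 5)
Your proposal is correct and follows essentially the same route as the paper: apply Lemma~\ref{5.9} at the principal marker $b_k$ to factorize $w(i,a)$, verify the two content conditions so that uniqueness of the left basic factorization identifies the triple, and then use Lemma~\ref{5.8} to recognize the outer factors as $w(i,b)$ and (up to $\Req$ modulo $\DRH$, or exactly in the non-regular case) $w(k,a)$. The only cosmetic difference is that the paper identifies the factorization as the left basic factorization directly in $\pseudo AS$ (making the detour through $\rho_\DRH$ and Proposition~\ref{p:1} unnecessary, since Lemma~\ref{5.8}(a) already gives the exact equality $\tp_b(\w(i,a)) = \w(i,b)$), whereas you invoke uniqueness over $\DRH$ for the main claim and over $\S$ only for the ``moreover'' part.
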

\begin{proof}
  As $b_k$ is the principal marker of $\w(i,a)$, we can write $\w(i,a) =
  x b_k y$, where $c_A(y) \subseteq c_A(xb_k)$ and $b \notin
  c_A(x)$. Since \eqref{hx} holds,  Lemma \ref{5.9} yields
  $$\eta(\w(i,a)) = \eta(\tp_b(\w(i,a)) \cdot b_k \cdot \Tt_k(\w(i,a))) =
  \eta(\tp_b(\w(i,a))) \cdot b \cdot \eta(\Tt_k(\w(i,a))).$$
  Furthermore, since $b \notin c_A(x)$, we also have $c_A(\tp_b(\w(i,a)))
  = c_A(x)$ and consequently, the left basic factorization of $w(i,a)$
  is precisely $$(\eta(\tp_b(\w(i,a))), b, \eta(\Tt_k(\w(i,a)))).$$ In
  particular, we have $m = b$ and, by Lemma \ref{5.8}, the
  pseudovariety $\DRH$ satisfies $w_\ell = w(i,b)$ and $ w_r \Req
  w(k,a)$, with an equality in $\S$ in the latter relation when
  $w(i,a)$ is not regular modulo $\DRH$.
\end{proof}
\section{$\DRH$-graphs and their computation}\label{section6}

We begin this section with the definition of a $\DRH$-graph. Through
these structures, we are able to decide whether two $\kappa$-words are
$\Req$-equivalent over $\DRH$. If we further assume that the word
problem is decidable in $\pseudok AH$, then the word
problem is decidable in $\pseudok A{DRH}$ as well.

\begin{deff}
  Let $w$ be a $\overline\kappa$-term.  The \emph{$\DRH$-graph of $w$}
  is the finite $\DRH$-automaton
  $$\iG(w) = \langle V(w), \to,\tq(0, \#), \{\varepsilon\},
  \lambda_{\h}, \lambda \rangle,$$
  defined as follows.
  The set of states is
  $$V(w) = \{\tq(i, a)\colon 0 \le i < \card \w, \: a \in c_A(\w)
  \text{ and } w(i,a) \neq I\}\uplus \{\varepsilon\}.$$
  Let $\tq(i,a) \in V(w)\setminus \{\varepsilon\}$ and $b_k$
  be the principal marker of $\w(i,a)$. The transitions of $\tq(i,a)$
  are $\tq(i,a).0 = \tq(i, b)$ and $\tq(i,a).1 = \tq(k, a)$.
  The labels are $\lambda_{\h}(\tq(i,a)) =
  \rho_\h(\reg{w(i,b)})$ and $\lambda(\tq(i,a)) = b$.
  If a state $\tq(i,a)$ is not reached from the root $\tq(0, \#)$,
  then we discard it from $V(w)$.
\end{deff}

The following result suggests that the construction of $\iG(w)$ might
be a starting point to solve the $\kappa$-word problem over $\DRH$
algorithmically.
\begin{prop}\label{p:9}
  For every $\overline\kappa$-term $w$, $\iG(w)$ is a $\DRH$-automaton
  equivalent to $\iT(w(0,\#))$.
\end{prop}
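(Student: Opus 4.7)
\emph{Strategy.} The plan is to match the recursion implicit in the construction of $\iG(w)$ with the recursion that Lemma~\ref{arv} provides for $\iT$. Fix a state $\tq(i,a)\in V(w)\setminus\{\varepsilon\}$ and let $b_k$ be the principal marker of $\overline w(i,a)$. By Corollary~\ref{5.11}, modulo $\DRH$ we have $\lbf{w(i,a)}=(w(i,b),b,w_r)$ with $w_r \Req w(k,a)$; since $\iT$ depends only on the $\Req$-class of its argument, Lemma~\ref{arv} yields
\[
\iT(w(i,a)) \;=\; (\iT(w(i,b)),\;\reg{w(i,b)} \mid b,\;\iT(w(k,a))).
\]
Inspecting Definition of $\iG(w)$ shows that the $0$- and $1$-transitions from $\tq(i,a)$ land in the subautomata $\iG(w)_{\tq(i,b)}$ and $\iG(w)_{\tq(k,a)}$, while the labels $\lambda(\tq(i,a))=b$ and $\lambda_\h(\tq(i,a))=\rho_\h(\reg{w(i,b)})$ are exactly the root labels of $\iT(w(i,a))$. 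This parallel is the engine of the proof.

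\emph{Induction.} Adopt the convention that ``$\tq(j,c)$'' is to be read as the final state $\varepsilon$ (matched against the trivial automaton $\mathbf{1}$ on the $\iT$-side) whenever $w(j,c)=I$. I would prove by induction on $k\ge 0$ that $\iG(w)_{\tq(i,a)}\sim_k \iT(w(i,a))$ for every state $\tq(i,a)$ of $\iG(w)$. The base case $k=0$ is the agreement of root labels just observed. For the inductive step, the $0$-successor on the $\iG(w)$-side is $\iG(w)_{\tq(i,b)}$ and on the $\iT$-side it is $\iT(w(i,b))$; by the induction hypothesis these are $\sim_{k-1}$-equivalent, and analogously for the $1$-successor with $\tq(k,a)$ and $\iT(w(k,a))$. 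Taking $(i,a)=(0,\#)$ and noting that $w(0,\#)=w$ gives $\iG(w)\sim \iT(w(0,\#))$.

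\emph{Axioms and main obstacle.} It remains to check that $\iG(w)$ is a $\DRH$-automaton. Properties~\ref{a1}--\ref{a3} are immediate from the construction and the $I$-convention; properties~\ref{a4}--\ref{a6} are phrased purely in terms of labels at reachable states together with the derived quantities $\norm\cdot$, $\regi\cdot$, $\cum\cdot$, all of which are $\sim$-invariants of a (sub)automaton. Since $\iT(w(0,\#))$ is a $\DRH$-tree and hence already satisfies \ref{a1}--\ref{a6}, and since $\iG(w)_{\tq(i,a)}\sim \iT(w(i,a))$ at every state, these axioms transfer to $\iG(w)$. The main delicate point, in my view, is the careful bookkeeping between equalities in $\pseudo AS$ and in $\pseudo A{DRH}$: Corollary~\ref{5.11} guarantees strict equality in $\pseudo AS$ only in the irregular case, so one must consistently work with the $\DRH$-level statements when invoking Lemma~\ref{arv}, using that $\iT$ is built on $\Req$-classes of $\pseudo A{DRH}$ from the outset.
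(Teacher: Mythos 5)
Your proof is correct and follows essentially the same route as the paper's: both arguments match the recursive structure of $\iG(w)$ against the decomposition $\iT(w(i,a)) = (\iT(w(i,b)), \reg{w(i,b)} \mid b, \iT(w(k,a)))$ obtained from Lemma~\ref{arv} together with Corollary~\ref{5.11}, and then conclude by induction (the paper inducts on the length of $\alpha$ to identify subtrees of $\iT(w(0,\#))$, you induct on the equivalence depth $k$ — a cosmetic difference). Your additional verification that the axioms \ref{a1}--\ref{a6} transfer along $\sim$ is sound and slightly more explicit than the paper, which treats this as part of the definition of $\iG(w)$.
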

\begin{proof}
  Let
  \begin{align*}
    \iT(w(0,\#)) &= \langle V, \to_{\iT}, \tq, F, \lambda_{\iT,\h},
                   \lambda_{\iT} \rangle,
    \\ \iG(w) &= \langle V(w), \to_{\iG}, \tq(0,
  \#), \{\varepsilon\}, \lambda_{\iG,\h}, \lambda_{\iG} \rangle.
  \end{align*}
  We
  first claim that, for every $\alpha \in \Sigma^*$, we have
  \begin{equation}
    \label{eq:14}
    \tq.\alpha = \tq(i,a) \implies \iT(w(0,\#))_{\tq.\alpha} = \iT(w(i,a)).
  \end{equation}
  To prove this, we argue by induction on $\card \alpha$. If $\card
  \alpha = 0$, then the result holds trivially. Let $\alpha \in \Sigma^*$ be such that
  $\card \alpha \ge 1$ and suppose that the result holds for every
  other shorter word~$\alpha$. We can write $\alpha = \beta \gamma$,
  with $\gamma \in \{0,1\}$. Let
  $\tq.\beta = \tq(i,a)$. By induction hypothesis, it follows that
  $\iT(w(0, \#))_{\tq.\beta} = \iT(w(i,a))$. Let $b_k$ be the principal
  marker of $\w(i,a)$. By definition of $\iG(w)$, we have
  $\tq(0, \#).\beta0 = \tq(i,b)$ and $\tq(0, \#).\beta1 = \tq(k,a)$.
  On the other hand, Lemma \ref{arv} gives that if $\lbf{w(i,a)} =
  (w_\ell, b, w_r)$, then
  $\iT(w(i,a)) = (\iT(w_\ell), \reg{w_\ell} \mid b, \iT(w_r))$,
  which in turn, by Corollary \ref{5.11}, is equivalent to
  \begin{align}
    \iT(w(i,a)) = (\iT(w(i,b)), \reg{w(i,b)} \mid b,
    \iT(w(k,a))).\label{eq:60}
  \end{align}
  In particular,
  we conclude that
  $\iT(w(0,\#))_{\tq.\beta0} = \iT(w(i,b)) \text{ and } \iT(w)_{\tq.\beta1}
  = \iT(w(k,a))$.
  It is now enough to notice that, for each pair $(i,a) \in
  {[}0,\card \w{[}\times c_A(\w)$, the labels of the node $\tq(i,a)$ of
  $\iG(w)$ and the labels of the root of $\iT(w(i,a))$ coincide. In
  fact, if $b_k$ is the principal marker of $\w(i,a)$, then the
  construction of $\iG(w)$ yields the equalities
  $\lambda_{\iG}(\tq(i,a)) = b $ and $\lambda_{\iG,\h}(\tq(i,a)) =
  \rho_{\h}(\reg{w(i,b)})$, which, by \eqref{eq:60}, are precisely the
  labels of the root of~$\iT(w(i,a))$.
\end{proof}

Imagine we are given a $\kappa$-word and let $w = a^{\omega
  + q}$ be one of its representations as a $\overline\kappa$-term, with $q$ ``very
big''. Then, we have $\w = 0_0[^qa_1]^q\#_2$ and so, $\card \w = 3$.
Conceptually speaking,
such a $\kappa$-word involves a ``large'' number
of implicit operations of $\kappa$ but the length of its
representation $\w$ in $\dyck{A\times \nn}$ is just $3$.
Therefore, allowing any representation of $\kappa$-words,
we would not be able to get meaningful results for
the efficiency of the forthcoming algorithms.
Thus, it is reasonable to require that all
$\kappa$-words are presented as $\kappa$-terms.
We make that assumption from now on.

Consider a $\kappa$-term $w$.
We may assume that $w$ is given by a tree.
For instance, if $w = ((((b^{\omega-1})\cdot a)\cdot c) \cdot (((a\cdot
b)\cdot (a^{\omega-1}))^{\omega-1}))$,
then the tree representing $w$  is depicted
in Figure~\ref{fig:12}.
\begin{figure}[h]
  \centering
  \begin{tikzpicture}[sibling distance=2cm, level 2/.style={sibling
      distance =1.5cm}, level distance = 0.7cm]
    \tikzstyle{bullet}=[inner sep = -50]
    \coordinate
    node[bullet]{\footnotesize$\bullet$}
    child{ node[bullet]{\footnotesize$\bullet$}
      child {node[bullet]{\footnotesize$\bullet$}
        child {node[bullet]{\footnotesize$\bullet$}
          child {node {$b$}}
        }
        child {node {$a$}}
      }
      child {node {$c$}}}
    child { node[bullet]{\footnotesize$\bullet$}
      child {node[bullet]{\footnotesize$\bullet$}
        child {node[bullet]{\footnotesize$\bullet$}
          child {node {$a$}}
          child {node {$b$}}}
        child {node[bullet]{\footnotesize$\bullet$}
          child{ node {$a$}}}}};
  \end{tikzpicture}
  \caption{The tree representing $((((b^{\omega-1})\cdot a)\cdot c) \cdot (((a\cdot
    b)\cdot (a^{\omega-1}))^{\omega-1}))$.}
  \label{fig:12}
\end{figure}
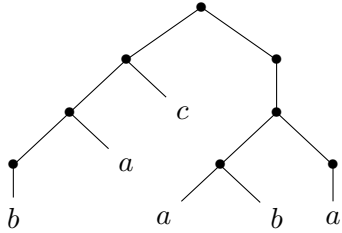
Since from such a tree representation we may compute $\w$ in linear
time, we assume that we are already given $\w$.
If the tree representing $w$ has $n$ nodes then,
following~\cite{palavra}, we say that the \emph{length} of $w$ is
$\card w = n + 1$. 
It is clear that $O(\card w) = O(\card \w)$.
To actually compute the $\DRH$-graph $\iG(w)$ we essentially need to
compute the principal marker of the  words $\w(i, a)$ as well
as the regular parts of $w(i,a)$.
Almeida and Zeitoun~\cite{palavra}
exhibited an algorithm to compute the first occurrences of each letter
of a well-parenthesized word~$x$.
Given a word $x$, ${\sf first}(x)$ consists of a list of the  first
occurrences of each letter in $x$.
In particular, this computes the principal marker of
$x$: it is the last entry of the outputted list.
Moreover, if $b_k$ is the principal marker of $x$, then the
penultimate entry of the list is the principal marker of $\tp_b(x)$,
and so on.
Hence, this is enough to almost compute $\iG(w)$.
More precisely, the knowledge of ${\sf first}(\w(i,a))$, for every
pair $(i,a)$, allows us to compute the reduct~$\iG_\R(w) = \langle
V(w), \to,\tq(0, \#), \{\varepsilon\}, \lambda \rangle$ in time
$O(\card w \card{c(w)})$.

\begin{lem}
  [{\cite[Lemma 5.15]{palavra}}]
  \label{l:1}
  Let $w$ be a $\overline\kappa$-term. Then, one may compute in time
  $O(\card w\card{c(w)} )$ a table giving, for each $i$ such
  there exists $a_i \in c(\w) \cap A \times \nn$, the word ${\sf
    first}(\w(i, \#))$.
\end{lem}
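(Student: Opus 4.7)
The plan is to compute the whole table in a single depth-first traversal of the parse tree of the $\overline\kappa$-term $w$, sharing work across positions. Since $\overline{w}(i,\#)=\tp_\#(\Tt_i(\overline{w}))$ and each ${\sf first}$-list has length at most $\card{c(w)}$, the target bound $O(\card{w}\,\card{c(w)})$ is linear in the total output size; the difficulty is purely algorithmic, namely to avoid recomputing, for different values of $i$, the parts of $\tp_\#(\Tt_i(\overline{w}))$ that are really shared between neighbouring leaves.

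The first phase is a bottom-up precomputation: for every subterm $u$ of $w$, compute the list ${\sf first}(\overline{u})$ by structural recursion on the parse tree. When $u = u_1 \cdot u_2$, merge ${\sf first}(\overline{u_1})$ with ${\sf first}(\overline{u_2})$, discarding from the second list any letter already appearing in the first; when $u = u_1^{\omega+q}$, enclose ${\sf first}(\overline{u_1})$ between the markers $[^q$ and $]^q$. Each merging step costs $O(\card{c(w)})$ and there are $O(\card{w})$ subterms, so this phase fits within the target bound.

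The second phase visits the leaves of the parse tree in left-to-right order while maintaining a stack of pointers to the lists produced in Phase~1. On descent into a child $c_j$ of an internal node, we push (pointers to) the contribution of ``what must be read after leaving $c_j$''; concretely, for a product node we push the lists ${\sf first}(\overline{c_{j+1}}), \ldots, {\sf first}(\overline{c_k})$ of the right siblings, and for an $(\omega+q)$-power with body $c_1$ we push the wrapping $[^{q-1}\,\overline{c_1}\,]^{q-1}$, in accordance with the recursive definition of $\Tt_i$. On ascent we pop. At each leaf $a_i$, the list ${\sf first}(\overline{w}(i,\#))$ is obtained by merging, top-down through the stack, the first-occurrence lists it carries.

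The main obstacle is amortising the cost of Phase~2. Because pushes and pops only manipulate pointers to the lists built in Phase~1, each costs $O(1)$; across the whole DFS this contributes only $O(\card{w})$. The $O(\card{c(w)})$ factor appears solely when, at a leaf, we perform the final merge of the stack contents to output ${\sf first}(\overline{w}(i,\#))$, which happens at most $\card{w}$ times. Summing the two phases yields the claimed bound $O(\card{w}\,\card{c(w)})$.
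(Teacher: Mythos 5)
Your decomposition of $\Tt_i(\w)$ into per-ancestor contributions (the right sibling at a product node, the unrolled body $[^{q-1}\,\overline{c_1}\,]^{q-1}$ at a power node) is correct, and your first phase is fine; note, for the record, that the paper gives no proof of this lemma at all, importing it from \cite[Lemma 5.15]{palavra}, so your argument can only be judged on its own merits. The genuine gap is in the accounting of your Phase~2. You push bare pointers in $O(1)$ and defer all merging to the leaves, asserting that the merge of the stack contents at a leaf costs $O(\card{c(w)})$. It does not: that merge must scan every list on the stack (entries that turn out to be duplicates still have to be looked at in order to be discarded), so a single leaf costs time proportional to the total length of the lists currently on the stack, which is bounded only by $(\text{depth of the leaf})\cdot\card{c(w)}$. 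A concrete failure: let $w=(\cdots((a\cdot a)\cdot a)\cdots)\cdot a$ be a left comb with $n$ leaves over a one-letter alphabet. At the $i$-th leaf your stack holds $n-i$ singleton lists, so your leaf merge costs $\Theta(n-i)$, and the total is $\Theta(n^2)$, while the claimed bound is $O(\card w\,\card{c(w)})=O(n)$. No early-termination trick rescues the lazy scheme as stated, since the stack may contain arbitrarily many items contributing nothing new before one that does (in the example, the item carrying $\#$ sits at the bottom).

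The repair is small, but it is exactly the missing idea: merge eagerly, top-down, rather than lazily at the leaves. When descending from a node $p$ into a child, compute and store (at the child, or as the new top of the stack) the merge of the pushed item with the already-merged first-occurrence list of $p$'s suffix context; on ascent, simply revert to the parent's stored list. This costs one merge of two lists of length at most $\card{c(w)}$ per edge of the parse tree, hence $O(\card w\,\card{c(w)})$ in total, and every leaf then reads off ${\sf first}(\w(i,\#))$ from the top of the stack in time $O(\card{c(w)})$. With that modification your two-phase scheme does prove the lemma, and it then coincides in substance with the dynamic-programming argument of Almeida and Zeitoun.
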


It remains to find the labels of the states under $\lambda_\h$.
For that purpose, we observe that the regular part of a pseudoword $u$ depends
deeply on the content of the factors of the form~$\lbf[k] u$,
which we may compute using Lemma \ref{5.9}; and of the cumulative
content of~$u$.
Also, it follows from Lemma \ref{l:2} and from Proposition \ref{p:9} that
the cumulative content of any pseudoword of the form $w(i,a)$ is
completely determined by the reduct $\iG_\R(w)$.
Thus, we may start by computing the cumulative content of $w(i,a)$ and
then compare it with the content of~$\lbf[k] {w(i,a)}$,
for increasing values of $k$. When we achieve an
equality, we know what is the regular part of $w(i,a)$. Algorithm~\ref{a:1} does that 
job. We assume that we already have the table described in
Lemma~\ref{l:1}, so that, computing $c(w(i,a))$ and 
the principal marker 
of $\w(i,a)$ takes {$O(1)$-time}.
Further, we may assume that we are given $\iG_\R(w)$, since we already
explained how to get it from the table of Lemma~\ref{l:1}
in $O(\card w \card{c(w)})$-time.

\begin{algorithm}[htpb]
  \begin{algorithmic}[1]
    \Require A $\overline\kappa$-term $w$ and  $(i, a) \in [0,\card \w{[}\times
    c_A(\w)$ (with $\w(i,a) \neq \varepsilon$)
    \Ensure $\reg{w(i,a)} = I$, if $\cum{w(i,a)} = \emptyset$ or $k$
    such that $\reg{w(i,a)} = w(k,a)$, otherwise
    \State $L \gets \{\}$, $j \gets i$
    \While {$j \notin L$ and $\w(j,a) \neq \varepsilon$}\label{op:3}
    \State {$j \gets \pi_\nn(\text{principal marker of
          $\w(j,a)$})$} \Comment{\parbox[t]{.35\textwidth}{So that, if $\tq(j,a).1 \neq
        \varepsilon$, then $\tq(j,a) \gets \tq(j,a).1$}}
    \State {$L \gets L \cup \{j\}$}
    \EndWhile
    \If {$\w(j,a) = \varepsilon$}
    \State{\bf return $I$}\label{op:39}
    \Else\label{op:41}
    \State $C \gets c(w(j,a))$ \Comment{The
      set $C$ is the cumulative content of $w(i,a)$} \label{op:1}
    \State $k \gets i$ \label{op:8}
    \While {$c_A(\w(k, a)) \neq C$} \label{op:9}
    \State $k \gets \pi_\nn(\text{principal marker of $\w(k, a)$})$
    \EndWhile\label{op:10}
    \State \textbf{return} $k$\label{op:11}
    \EndIf\label{op:42}
  \end{algorithmic}
  \caption{}
  \label{a:1}
\end{algorithm}

\begin{lem}\label{l:12}
  Algorithm \ref{a:1} returns $I$ if and only if $\cum{w(i,a)} =
  \emptyset$.
  Otherwise, the value $k$ outputted is such that $\reg
  {w(i,a)} = w(k,a)$.
  Moreover, the algorithm runs in linear time, provided we have the
  knowledge of ${\sf first}(w(i,a))$.
\end{lem}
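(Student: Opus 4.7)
The plan is to interpret the two \textbf{while} loops as iterating the left basic factorization of $w(i, a)$, verify that each loop computes what it should, and bound the number of iterations. First, I would apply Corollary~\ref{5.11}: if $b_k$ is the principal marker of $\w(j, a)$ and $\lbf{w(j, a)} = (w_\ell, b, w_r)$, then $\DRH$ satisfies $w_r \Req w(k, a)$, with equality in $\pseudo AS$ when $w(j, a)$ is not regular over $\DRH$. Writing $u_0' = w(i, a)$ and $\lbf{u_n'} = (u_{n+1}, a_{n+1}, u_{n+1}')$, a straightforward induction gives $u_n' \Req w(j_n, a)$ modulo $\DRH$, where $j_0 = i$ and $j_{n+1}$ is the updated value of $j$ at the $n$-th iteration. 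Since $\Req$-equivalent elements of $\pseudo A{DRH}$ share their content, $c(u_n') = c(w(j_n, a))$ for every $n$.

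For the first loop, there are two possible exits: either $\w(j, a) = \varepsilon$ (equivalently, $u_n' = I$ at some step), which by definition means $\cum{w(i, a)} = \emptyset$ and makes the return of $I$ at line~\ref{op:39} correct; or an index $j$ repeats, which (since $j$ ranges over the finite set $[0, \card \w)$) happens exactly when the LBF iteration does not terminate, i.e., when $\cum{w(i, a)} \neq \emptyset$. In this second case, the fact that the contents $c(u_n')$ form a non-increasing sequence together with the definition of cumulative content ensures that once the chain enters the cycle, the common value of $c(u_n')$ equals $\cum{w(i, a)}$. Hence $C$ computed at line~\ref{op:1} is exactly $\cum{w(i, a)}$.

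For the second loop, I would invoke the definition of regular part: $\reg{w(i, a)} = u_k'$ for the minimal $k \ge 0$ with $c(u_k') = \cum{w(i, a)}$. By minimality, $u_0', \ldots, u_{k-1}'$ are all non-regular over $\DRH$, so iterated application of Corollary~\ref{5.11} upgrades the $\Req$-equivalence $u_n' \Req w(j_n, a)$ to a literal equality $u_n' = w(j_n, a)$ for $n \le k$. Thus the loop, which iterates the principal-marker map from $k = i$ until $c_A(\w(k, a)) = C$, correctly finds an index $k$ with $\reg{w(i, a)} = w(k, a)$. The loop terminates in at most $\card{c(w)}$ steps because $C \subseteq c_A(\w(k, a))$ at every iteration and the left-hand side shrinks strictly before stabilizing.

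For the complexity, given the table of first occurrences, the principal marker is extracted in constant time, and the content comparison $c_A(\w(k, a)) = C$ can be done in $O(1)$ by pre-computing $\card{c_A(\w(k, a))}$ and using that $c_A(\w(k, a)) \supseteq C$ always. Each loop visits each index in $[0, \card \w)$ at most once, so the total number of iterations is $O(\card \w) = O(\card w)$, yielding a linear-time algorithm. The main obstacle I anticipate is the careful identification of $w(j_n, a)$ with the $n$-th LBF residue $u_n'$ as a literal equality (rather than merely modulo $\Req$) in the range $n \le k$, together with the propagation of content information from $\pseudo A{DRH}$ back to $\pseudo AS$; both are handled by Corollary~\ref{5.11} and the content-preservation of $\Req$-equivalence over $\DRH$.
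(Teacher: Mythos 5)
Your argument is sound in its overall structure and reaches the right conclusions, and it is worth separating what coincides with the paper from what does not. For the second loop your proof is essentially the paper's: both rest on Corollary~\ref{5.11}, whose ``Moreover'' clause upgrades the $\Req$-equivalences to literal equalities $u_n' = w(j_n,a)$ as long as the successive tails are non-regular modulo $\DRH$, non-regularity being guaranteed by the minimality in the definition of the regular part. For the first loop you diverge: the paper identifies $C$ with $\cum{w(i,a)}$ through the automaton machinery (Proposition~\ref{p:9}, Theorem~\ref{3.21} and Lemma~\ref{l:2}, together with properties~\ref{a3} and~\ref{a4}, which force the chain of label sets along the $1$-path to collapse to equalities on the cycle), whereas you argue directly on pseudowords, using that the tail contents $c(u_n')$ are non-increasing and constant along the cycle. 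This more elementary route works, with two caveats you should make explicit: (i) your ``straightforward induction'' giving $u_n' \Req w(j_n,a)$ modulo $\DRH$ needs Lemma~\ref{sec:13}, to pass from $\Req$-equivalence of two pseudowords to $\Req$-equivalence of the tails of their left basic factorizations; and (ii) identifying the stabilized tail content with $\cum{w(i,a)}$ is not ``the definition of cumulative content'': that definition is phrased in terms of the factors $\lbf[k]{w(i,a)}$, while what you actually need is that $c(u_k') = \cum{w(i,a)}$ for some finite $k$, i.e.\ the existence of the regular part, which is implicit in the paper's definition of $\reg{w(i,a)}$ and should be cited as such.

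One claim in your third paragraph is false: ``the loop terminates in at most $\card{c(w)}$ steps because $C \subseteq c_A(\w(k,a))$ at every iteration and the left-hand side shrinks strictly before stabilizing.'' The tail contents need not shrink strictly at each iteration before reaching $C$. For instance, for $w = (abc)^m\, bc\, c^{\omega-1}$ and $(i,a) = (0,\#)$, the second loop performs $m+1$ iterations, the first $m$ of which all see content $\{a,b,c\}$, so no bound depending only on $\card{c(w)} = 3$ can hold. The error is harmless only because it is redundant: the correct termination and complexity argument is the one you give in your final paragraph --- each loop visits each index of $\w$ at most once (for the second loop this follows from your own cycle argument: the content equals $C$ from the first index of the cycle onwards, so the loop exits no later than the first would-be repetition), giving $O(\card{\w})$ iterations and hence linear time, exactly as in the paper. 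Delete the $\card{c(w)}$ claim, justify the ``each index at most once'' assertion for the second loop as above, and the proof stands.
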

\begin{proof}
  By Property \ref{a3} of a $\DRH$-automaton, and since there is only
  a finite number of possible states in $\iG_\R(w)_{\tq(i,a)}$, either there
  exists $k \ge 0$ such that $\tq(i,a).1^k = \varepsilon$, or there
  exists $\ell > k \ge 0$ such that $\tq(i,a).1^k = \tq(i, a).1^\ell$.
  Therefore, the cycle {\bf while} in line~\ref{op:3} does not run forever.
  If the occurring situation is the former, then
  $\cum{\iG(w)_{\tq(i,a)}} = \emptyset$.
  On the other hand, by Proposition \ref{p:9}, we have
  $\iG(w)_{\tq(i,a)}\sim \iT(w(i,a))$ which in turn, by Theorem
  \ref{3.21}, implies
  $\pi(\iG(w)_{\tq(i,a)}) \Req w(i,a)$ modulo $\DRH$. Also, Lemma \ref{l:2}
  yields $ \cum{w(i,a)}= \cum{\iG(w)_{\tq(i,a)}} = \emptyset$,
  and therefore, $\reg{w(i,a)} = I$.
  This is the case where the symbol $I$ is returned in line \ref{op:39}.

  Now, suppose that $\ell > k \ge 0$ are such that $\tq(i,a).1^k =
  \tq(i, a).1^\ell$.
  Then, the cycle {\bf while} is exited because an index $j$ is
  repeated.
  By Property \ref{a4}, we have a chain
  of inclusions:
  $\lambda(\iG(w)_{\tq(i,a).1^k})\supseteq
  \lambda(\iG(w)_{\tq(i,a).1^{k+1}})\supseteq \cdots
  \supseteq \lambda(\iG(w)_{\tq(i,a).1^{\ell}})$.
  As $\tq(i,a).1^k =  \tq(i, a).1^\ell$, these inclusions are
  actually equalities, implying that $k$ is greater  than or equal to $\regi{\iG(w)_{\tq(i,a)}}$.
  Combining again Proposition \ref{p:9}, Theorem \ref{3.21} and Lemma~\ref{l:2}, we may deduce that
  $\cum{w(i,a)} = \cum{\iG(w)_{\tq(i,a)}} =
  \lambda(\iG(w)_{\tq(i,a).1^k})$,
  where the last member is precisely $c(w(j,a))$ provided that
  $\tq(i,a).1^k = \tq(j,a)$.
  Therefore, in line \ref{op:1} we assign to $C$ the cumulative content
  of $w(i,a)$.
  Until now, since we are assuming that we are given all the
  information about $\iG_\R(w)$, we only spend time $O(\card w)$,
  because that is the number of possible values of $j$ that may appear in
  line \ref{op:3}.

  Let us prove that, if we get to line \ref{op:40}, then the value $k$
  outputted in line \ref{op:11} is such that 
  $ \reg{w(i,a)} = w(k,a)$.
  We write
  $$w(i,a) = \lbf[1]{w(i,a)}\cdots \lbf[m]{w(i,a)}w_m',$$
  for every $m \ge 1$ (notice that $\lbf[m]{w(i,a)}$ is defined for all
  $m \ge 1$ because we are assuming that $\cum{w(i,a)} \neq
  \emptyset$).
  Then, the regular part of
  $w(i,a)$ is given by $w_\ell'$, where $\ell = \min\{m \colon c(w_m')
  = \cum{w(i,a)}\}$. In particular, the projection of $w_m'$ onto
  $\pseudo A{DRH}$ is not regular, for every $m< \ell$.
  Set $(c_0, k_0) = (a, i)$ and, for $m
  \ge 0$, let $(c_{m+1}, k_{m+1})$ be the principal marker of $\w(k_m,
  a)$. By Corollary \ref{5.11}, if
  $w(k_m, a)$ is not regular modulo $\DRH$, then we have $\lbf{w(k_m, a)} =
  (w(k_m, c_{m+1}),c_{m+1},w(k_{m+1},a))$. Therefore, the equality
  $w_m' = w(k_{m},a)$ holds, for every $m \le
  \ell$. Thus, the value $k$ returned in line
  \ref{op:11} is precisely $k_\ell$, implying that $\reg{w(i,a)} =
  w(k,a)$ as intended.

  Since there are only $O(\card\w)$ possible values for $k$ and we are
  assuming that we already know ${\sf first (w(i, \#))}$ for all $i
  \in [0, \card\w{[}$, it follows that lines \ref{op:41}--\ref{op:42} run
  in time $O(\card\w)$.

  Therefore, the overall time complexity of Algorithm \ref{a:1} is $O(\card
  w)$.
\end{proof}

So far, we possess all the needed information for computing
$\iG(w)$.
Putting all the steps together, we obtain the following.

\begin{thm}\label{t:2}
  Given a $\kappa$-term $w$, it is possible to compute the $\DRH$-graph of $w$
  in time~$O(\card w^2 \card{c(w)})$.\qed
\end{thm}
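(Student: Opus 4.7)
The plan is to assemble the ingredients already prepared: Lemma~\ref{l:1}, the remark that the table of first occurrences yields the reduct~$\iG_\R(w)$, and the analysis of Algorithm~\ref{a:1} given by Lemma~\ref{l:12}. Since the number of states of $\iG(w)$ is bounded by $\card w\cdot\card{c(w)}$, the strategy is to spend $O(\card w\card{c(w)})$ time on a global preprocessing step and then $O(\card w)$ time per state on labelling.

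First, invoke Lemma~\ref{l:1} to build, in time $O(\card w\card{c(w)})$, a table that gives ${\sf first}(\w(i,\#))$ for every index~$i$ appearing as a first coordinate in $c(\w)$. From this table the principal marker $b_k$ of any word $\w(i,a)$, together with the content $c_A(\w(i,a))$, can be read off in constant time: $b_k$ is the last entry of ${\sf first}(\w(i,\#))$ preceding the position at which a letter outside $c_A(\w(i,a))$ would appear, and $c_A(\w(i,a))$ is the corresponding prefix of that list. Scanning over all pairs $(i,a)$, this produces the transitions $\tq(i,a).0=\tq(i,b)$ and $\tq(i,a).1=\tq(k,a)$ and the labels $\lambda(\tq(i,a))=b$; thus the reduct $\iG_\R(w)$, together with the reachable set $V(w)$, is obtained in time $O(\card w\card{c(w)})$.

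Second, for each state $\tq(i,a)$ of $\iG(w)$, compute $\lambda_\h(\tq(i,a))=\rho_\h(\reg{w(i,b)})$ by running Algorithm~\ref{a:1} on input $(i,b)$, where $b$ is the letter determined above. By Lemma~\ref{l:12} this takes time $O(\card w)$ and returns either the symbol $I$ (meaning $\lambda_\h(\tq(i,a))=I$) or an index~$k$ with $\reg{w(i,b)}=w(k,b)$. In the latter case the label $\rho_\h(w(k,b))$ is stored simply as the pair $(k,b)$, i.e.\ as a pointer to the subword $\w(k,b)$ of~$\w$; no $\kappa$-term needs to be materialised at this stage. Hence the total cost of producing every $\lambda_\h$-label is $O(\card w)$ per state times $O(\card w\card{c(w)})$ states, that is, $O(\card w^2\card{c(w)})$.

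Summing the contributions, the preprocessing of Lemma~\ref{l:1} and the construction of $\iG_\R(w)$ cost $O(\card w\card{c(w)})$, while the labelling by $\lambda_\h$ costs $O(\card w^2\card{c(w)})$, and the latter dominates, yielding the announced complexity. The only subtle point is the compact representation of the labels $\lambda_\h(\tq(i,a))$ by indices rather than explicit $\kappa$-terms; without this observation the size of an individual label could already exceed the stated global bound, so this is the step to handle carefully.
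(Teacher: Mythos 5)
Your proposal is correct and follows essentially the same route as the paper: Lemma~\ref{l:1} gives the first-occurrence tables in $O(\card w\card{c(w)})$ time, from which the reduct $\iG_\R(w)$ is built, and then Algorithm~\ref{a:1} (via Lemma~\ref{l:12}) is run once per state, each run costing $O(\card w)$, over the $O(\card w\card{c(w)})$ states. Your explicit remark that the labels $\lambda_\h(\tq(i,a))$ must be stored as index pairs $(k,b)$ rather than materialised $\kappa$-terms is a point the paper leaves implicit (Algorithm~\ref{a:1} returns an index, and the conversion cost is deferred to the parameter $p(u,v)$ in Theorem~\ref{t:7}), and it is a sound and worthwhile clarification.
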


The next question we should answer is how can we decide whether two
$\DRH$-graphs $\iG(u)$ and~$\iG(v)$ represent the same element of
$\DRH$, that is, whether $\iG(u) \sim \iG(v)$.
A possible strategy consists in visiting states in both $\DRH$-graphs,
comparing their labels (in a certain order).
When we find a pair of mismatching labels, we stop, concluding that
$\iG(u)$ and $\iG(v)$ are not equivalent.
Otherwise, we conclude that they are equivalent after visiting all the
states.
More precisely, starting in the roots of $\iG(u)$ and $\iG(v)$, we
mark the current states, say $\tq_u \in V(u)$ and $\tq_v \in V(v)$, as
visited, and then repeat the process relatively to
the pairs of $\DRH$-automata $(\iG(u)_{\tq_u.0}, \iG(v)_{\tq_v.0})$
and $(\iG(u)_{\tq_u.1}, \iG(v)_{\tq_v.1})$.
For a better understanding of the procedure, we sketch it in
Algorithm \ref{a:20}.
\begin{algorithm}[htpb]
  \begin{algorithmic}[1]
    \Require {two $\DRH$-graphs $\iG_i = \langle V_i, \to_i, \tq_i,
      \lambda_{i,\h}, \lambda_i \rangle$ ($i = 1,2$)}
    \Ensure {logical value of ``$\iG_1 \sim \iG_2$''}
    \If {$\tq_1 = \varepsilon$}
    \State {\bf return} logical value of $\tq_2 = \varepsilon$
    \ElsIf {$\tq_1$ or $\tq_2$ is unvisited}
    \State {mark $\tq_1$ and $\tq_2$ as visited}
    \If {$\lambda_{1,\h}(\tq_1) =\lambda_{2,\h}(\tq_2)$ and
      $\lambda_{1}(\tq_1) =\lambda_{2}(\tq_2)$}\label{op:40}
    \State {\bf return} logical value of ``$(\iG_1)_{\tq_1.0} \sim
    (\iG_2)_{\tq_2.0}$ and 
    $(\iG_1)_{\tq_1.1} \sim (\iG_2)_{\tq_2.1}$''
    \Else
    \State {\bf return} {\sf False}
    \EndIf
    \Else
    \State {\bf return} logical value of $(\lambda_{1,\h}(\tq_1),
    \lambda_{1}(\tq_1)) = (\lambda_{2,\h}(\tq_2), \lambda_{2}(\tq_2))$
    \EndIf
  \end{algorithmic}
  \caption{}
  \label{a:20}
\end{algorithm}

\begin{lem}
  Algorithm \ref{a:20} returns the logical value of ``$\iG_1 \sim
  \iG_2$'' for two input $\DRH$-graphs $\iG_1$ and $\iG_2$.
  Moreover, it runs in time $O(p\max\{\card
  {V_1}, \card {V_2}\})$, where $p$ is such that the word problem
  modulo $\h$ for any pair of labels $\lambda_{1,\h}(\tv_1)$ and
  $\lambda_{2,\h}(\tv_2)$ (with $\tv_1 \in V_1$ and $\tv_2 \in V_2$) may be
  solved in time $O(p)$.
\end{lem}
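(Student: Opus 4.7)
The plan is to address termination and complexity first, then correctness.

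For termination and complexity, I would observe that the ``recursive'' branch of Algorithm~\ref{a:20} is entered only when at least one of $\tq_1$, $\tq_2$ is currently unvisited, and both are immediately marked as visited upon entry. Since each individual state can switch from unvisited to visited at most once, the total number of calls that descend into the children is bounded by $|V_1| + |V_2|$. Adding the trivial $\varepsilon$-base calls and the short-circuit ``else'' calls (each of which is charged to its unique invoking recursive call) yields a total of $O(\max\{|V_1|,|V_2|\})$ calls overall. Every call performs $O(1)$ work apart from the two equality tests on line~\ref{op:40}: comparison of labels in $A\uplus\{\varepsilon\}$ is $O(1)$, whereas comparison of $\lambda_{i,\h}$-values amounts to a single instance of the $\h$-word problem, done in time $O(p)$ by hypothesis. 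Multiplying these bounds yields the claimed $O(p\max\{|V_1|,|V_2|\})$ complexity, and also guarantees termination.

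Correctness splits into two directions. Completeness---that $\iG_1 \sim \iG_2$ forces the algorithm to return {\sf True}---is immediate from Definition~\ref{d:1}: every label comparison the algorithm ever makes is between labels at states $\tq_1.\alpha$ and $\tq_2.\alpha$ reached along a common path $\alpha \in \Sigma^*$ from the roots, and by hypothesis these labels coincide; consequently no branch returns {\sf False} and the recursion ultimately returns {\sf True}.

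The main obstacle is soundness: assuming the algorithm returns {\sf True}, I would show $\iG_1 \sim \iG_2$, i.e., that condition~\eqref{eq:10} holds for every $\alpha \in \Sigma^*$. I plan to induct on $|\alpha|$. The base case $\alpha = \varepsilon$ is the root comparison itself. For the inductive step, at a call with parameters $(\tq_1, \tq_2)$ reached along a strict prefix of $\alpha$, two branches can be taken. In the ``recursive'' branch, the recursive hypothesis immediately propagates the label equalities along $\alpha0$ and $\alpha1$. The delicate case is the ``else'' branch, where both states are already visited and only the root labels are rechecked. Here I would exploit the invariant that each such state was first entered paired with some partner in an earlier call which, because it went through the recursive branch, verified label equalities along every path emanating from that pair; combining those previously established equalities with the current root comparison by transitivity yields agreement of labels on all extensions of the current path. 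Specializing this to the initial call on the roots gives $\iG_1 \sim \iG_2$, as required.
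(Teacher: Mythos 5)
Your complexity bound and the completeness direction are correct, and together they already amount to the whole of the paper's own one-line proof: the paper, too, charges $O(p)$ to each call and gets linearity from the fact that a call descends into children only when a fresh state gets marked. The genuine problem is the soundness direction, exactly at the step you yourself flag as delicate, and the argument you sketch there does not work. First, the equalities you want to invoke are not ``previously established'': the call in which a visited state $u$ was first entered is typically still \emph{open} (an ancestor of the current call on the stack), so subtree agreement for that pair is part of what is being proved, not something available; and even for a completed recursive-branch call it is false that it ``verified label equalities along every path emanating from that pair'', since its own recursion may have been truncated by else-branches. Second, transitivity yields nothing here: in the else branch applied to a pair $(u,v)$ you know at best $(\iG_1)_u \sim (\iG_2)_{v'}$ (from the pair in which $u$ was first visited) and $(\iG_1)_{u''} \sim (\iG_2)_{v}$ (from the pair in which $v$ was first visited), plus equality of the labels of $u$ and $v$; but no run of the algorithm ever compares $v'$ with $v$ or $u$ with $u''$, so there is no chain from which $(\iG_1)_u \sim (\iG_2)_v$ could follow.

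Moreover, the gap cannot be repaired by any argument that, like yours, uses only the automaton structure, the marking discipline and Definition~\ref{d:1}: at that level of generality the per-state marking scheme is simply unsound. Take $\iG_1$ with non-terminal states $A,B,C,u,D,D'$, transitions $A.0=B$, $A.1=C$, $B.0=C$, $B.1=u$, $C.0=D$, $C.1=\varepsilon$, $u.0=u.1=D'$, all transitions from $D,D'$ to the terminal state, labels $\lambda(A)=a$, $\lambda(B)=b$, $\lambda(C)=\lambda(u)=c$, $\lambda(D)=\lambda(D')=d$ and $\lambda_\h\equiv I$, and let $\iG_2$ be an isomorphic copy except that its root's $1$-transition goes to the copy of $u$ instead of the copy of $C$. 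Both satisfy \ref{a1}--\ref{a6}, and they are not equivalent (at $\alpha=11$ one reaches the terminal state, the other a state labelled $d$); yet the algorithm, evaluating the conjunction left to right, returns {\sf True}: the recursion through the $0$-subtrees visits $C$, $u$ and their copies with every check passing, and the inequivalent pair consisting of $C$ and the copy of $u$, reached at $\alpha=1$, then falls into the final else branch, where only the (equal) label pairs $(I,c)$ are compared. Since the lemma's inputs are the special automata $\iG(u)$, $\iG(v)$, this example does not by itself refute the lemma, but it shows that any correct soundness proof must exploit structural properties of $\DRH$-graphs that your argument never invokes (for instance, how two distinct paths can reach the same state $\tq(i,a)$, which in $\iG(w)$ happens only along $1$-spines created by $(\omega+q)$-powers), or else the algorithm must be changed to mark \emph{pairs} of states (\`a la Hopcroft--Karp union--find), which in turn alters the complexity accounting. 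To be fair, the paper's own assertion that ``correctness follows straightforwardly from the definition of $\sim$'' glosses over precisely the same issue.
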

\begin{proof}
  The correctness follows straightforwardly from the definition of the
  relation $\sim$.
  On the other hand, it runs in time $O(p\max\{\card{V_1}, \card{V_2}\})$,
  since each call of the algorithm takes time $O(p)$ (line \ref{op:40}) and
  each pair of states of the form ($\tq_1.\alpha$, $\tq_2.\alpha$) is
  visited exactly once.
\end{proof}

Given $\overline\kappa$-terms $u$ and $v$, we use $p(u,v)$ to denote a
function depending on some parameters associated with $u$ and $v$
(that may be, for instance, $\card u$, $\card v$ or $c(u)$, $c(v)$)
and such that, the time for solving the word problem over $\h$ for any
pair of factors of the form $u(i,a)$ and $v(j,b)$ is in
$O(p(u,v))$.
Observe that the time to transform an expression of the form
$u(i,a)$ into a $\overline\kappa$-term should be taken into
account.
Furthermore, such a function $p(u,v)$ is not unique, but the results are valid
for any such function.
Then, summing up the time complexities of all the intermediate steps considered
above, we have just proved the following result.

\begin{thm}\label{t:7}
  Let $\h$ be a  pseudovariety of groups with decidable
    $\kappa$-word problem,
  and let $u$ and $v$ be $\kappa$-terms.
  Then, the equality of the pseudowords represented by
  $u$ and $v$ over $\DRH$ can be tested in time
  $O((p(u,v) + m) m \card A)$, where $m = \max\{\card u, \card v\}$. \qed
\end{thm}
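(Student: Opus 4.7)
The plan is to combine the two necessary and sufficient conditions given by Lemma \ref{sec:16}: the equality $u =_\DRH v$ holds if and only if $\DRH$ satisfies $u \Req v$ and $\h$ satisfies $u = v$. I will test each condition separately and add up the running times.

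First, I would compute the $\DRH$-graphs $\iG(u)$ and $\iG(v)$. By Theorem~\ref{t:2}, each of these computations takes time $O(m^2\card A)$, since $\card{c(w)} \le \card A$ for $w \in \{u,v\}$. Next, observe that $w(0,\#) = w$ (up to the obvious identification, since $\tp_\#(\Tt_0(\w)) = \word w_\nn$), so Proposition~\ref{p:9} states $\iG(w) \sim \iT(w)$. Combining this with Theorem~\ref{3.21}, the $\DRH$-automata $\iG(u)$ and $\iG(v)$ are equivalent if and only if the $\Req$-classes $[u]_\Req$ and $[v]_\Req$ modulo $\DRH$ coincide, that is, if and only if $\DRH$ satisfies $u \Req v$. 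Hence I may apply Algorithm~\ref{a:20} to $\iG(u)$ and $\iG(v)$ to decide this first condition. Since $\card{V(w)} = O(\card w \card{c(w)}) = O(m\card A)$ and each comparison of $\h$-labels takes time $O(p(u,v))$ by hypothesis, this step runs in time $O(p(u,v)\,m\card A)$.

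For the second condition, I would simply apply the hypothesis that $\h$ has decidable $\kappa$-word problem to test whether $u =_\h v$, which by definition of $p(u,v)$ takes time $O(p(u,v))$. Finally, summing the three contributions
\begin{equation*}
  O(m^2\card A) + O(p(u,v)\,m\card A) + O(p(u,v)) \;=\; O\bigl((p(u,v) + m)\,m\,\card A\bigr),
\end{equation*}
yields exactly the stated bound.

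There is no real obstacle here: the work has already been done in the preparatory results, and this theorem is essentially a bookkeeping assembly. The only minor care required is to confirm that $w(0,\#)$ coincides with the pseudoword represented by the $\kappa$-term $w$, so that the equivalence given by Proposition~\ref{p:9} applies to $\iT(w)$ itself rather than to some auxiliary factor, and to verify that the $\h$-labels occurring in $\iG(w)$ are of the form $\rho_\h(\reg{w(i,b)})$, which by Lemma~\ref{l:6} and Lemma~\ref{arv} are themselves $\kappa$-words over $\h$, so that $p(u,v)$ legitimately bounds the cost of comparing them.
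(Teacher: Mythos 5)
Your proposal is correct and is precisely the assembly the paper intends: Theorem~\ref{t:2} to build $\iG(u)$ and $\iG(v)$, Algorithm~\ref{a:20} (via Proposition~\ref{p:9} and Theorem~\ref{3.21}) to test $\Req$-equivalence over $\DRH$, and Lemma~\ref{sec:16} to reduce full equality to that plus one $\h$-word problem, with the same cost accounting. The paper leaves this summation implicit (``summing up the time complexities of all the intermediate steps''), and your only care points --- that $u(0,\#)$ is the pseudoword represented by $u$, and that the $\lambda_\h$-labels are projections of factors $w(k,b)$ so that $p(u,v)$ covers their comparison --- are exactly the right ones.
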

Observe that, in general, the complexity of an algorithm for solving
the $\kappa$-word problem over~$\h$ should depend on the length of the
intervening $\overline\kappa$-terms.
It is not hard to see that the length of the factors $w(i,a)$ grows
quadratically on $\card w$ (we prove it below in Corollary \ref{c:15}).
Hence, it is expected that, at least in most of the cases, $m$
belongs to $O(p(u,v))$.
Consequently, the overall time complexity stated in Theorem \ref{t:7} becomes
$O(p(u,v) m \card A)$.
Since we are doing the same approach as in \cite{palavra}, this result is
somehow the expected one.
Roughly speaking, this may be interpreted as the time complexity of solving
the word problem in~$\R$,
together with a word problem in $\h$ for each state, that is, for each
$\DRH$-factor of the involved pseudowords (recall Lemmas~\ref{sec:16} and~\ref{l:3}).

Just as a complement, we mention that
another possible approach would be to transform the $\DRH$-graph
$\iG(w)$ in an automaton in the classical sense, say $\iG'(w)$, recognizing the
language $\iL(w)$.
That is easily done (time linear on the number of states), by moving the
labels of a state to the arrows leaving it. More
precisely, the automaton $\iG'(w)$ shares the set of states with
$\iG(w)$ and
each non terminal state $\tq(i,a)$ has two transitions:
\begin{align*}
  \tq(i,a).(0,\lambda_\h(\tq(i,a)),\lambda(\tq(i,a)))
  & = \tq(i,1).0
  \\ \tq(i,a).(1,I, \lambda(\tq(i,a))) & = \tq(i,a).1.
\end{align*}
Then, we could use the results in the literature in order to
minimize the automaton, obtaining a unique automaton representing each
$\Req$-class of the semigroup $(\pseudo A{DRH})^I$.
The unique issue in that approach is that the algorithms are usually
prepared to deal with alphabets whose members may be compared in
constant time.
Hence, we should previously prepare the input automaton by renaming
the subset of the alphabet
$\Sigma \times (\pseudo AH)^I \times A$, in which the labels of
transitions are being considered.
Let $p(u,v)$ and $m$ have the same meaning has in Theorem \ref{t:7}.
Since, a priori, we do not possess any information about the possible values for
$\lambda_\h$, that would take $O(p(u,v) (m \card A)^2)$-time (each time we rename an
element in $(\pseudo AH)^I$ we should first verify whether we already
encountered another element with the same value over $\h$).
Thereafter, we could use the linear time algorithm presented in
\cite{dfamin}, which works for this kind of automaton.
Thus, a rough upper bound for the complexity spent using this method is
$O(p(u,v)m^2\card A^2)$, which although a bit worse, is still polynomial.

The following result gives us a family of pseudovarieties of the
form $\DRH$ with decidable $\kappa$-word problem.
It is a consequence of the fact that the free group is residually in~$\G_p$.
\begin{cor}
  Let $p$ be a prime number.
  If $\h \supseteq \G_p$ is a pseudovariety of groups, then the
  pseudovariety $\DRH$ has
  decidable $\kappa$-word problem.\qed
\end{cor}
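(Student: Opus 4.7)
The plan is to combine Theorem~\ref{t:7} with classical facts about free groups. Since Theorem~\ref{t:7} already reduces the $\kappa$-word problem over $\DRH$ to the $\kappa$-word problem over $\h$, the only new thing to establish is that every pseudovariety $\h$ of groups containing $\G_p$ has a decidable $\kappa$-word problem.

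For this I would rely on two standard facts. First, in any group the identity $\lim_n x^{n!-1} = x^{-1}$ holds, so each $\kappa$-term $u$ over $A$ naturally represents an element $\overline u$ of the free group $\FG A$, obtainable by rewriting in time linear in $\card u$. Second, Magnus's classical theorem asserts that $\FG A$ is residually a finite $p$-group, i.e., residually in $\G_p$. Consequently, for any pseudovariety of groups $\h$ containing $\G_p$, the canonical continuous homomorphism $\FG A \to \pseudo AH$ induced by the identity on $A$ is injective, so $\FG A$ embeds as a dense subgroup of $\pseudo AH$. In particular, the $\kappa$-subsemigroup of $\pseudo AH$ generated by $A$, which is exactly $\pseudok AH$, coincides with the image of $\FG A$.

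Putting these together, two $\kappa$-terms $u$ and $v$ define the same element of $\pseudok AH$ if and only if $\overline u = \overline v$ in $\FG A$, and this equality can be decided in time linear in $\card u + \card v$ by applying free reduction. Plugging this algorithm into Theorem~\ref{t:7} then yields decidability of the $\kappa$-word problem over $\DRH$.

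The only genuinely non-trivial ingredient in this argument is the invocation of Magnus's residual $p$-finiteness theorem for $\FG A$; everything else amounts to unwinding definitions and quoting Theorem~\ref{t:7}. Hence no serious obstacle arises beyond citing that classical result and writing out the reduction cleanly.
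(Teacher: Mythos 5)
Your proposal is correct and matches the paper's own (implicit) argument: the paper justifies this corollary exactly by combining Theorem~\ref{t:7} with the fact that the free group is residually in $\G_p$, so that equality of $\kappa$-terms over any $\h \supseteq \G_p$ reduces to equality in $\FG A$, decidable by free reduction. The identification of the $(\omega-1)$-power with inversion in pro-group semigroups, which you use to map $\kappa$-terms into $\FG A$, is likewise the same mechanism the paper exploits in Section~\ref{section7} for $\DRG$.
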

\section{An application: solving the word problem over ${\sf DRG}$}\label{section7}

Let us illustrate the previous results by considering the particular
case of the pseudovariety~${\sf DRG}$.
By Theorem \ref{t:7}, the time complexity of our procedure for
testing identities of $\kappa$-terms modulo ${\sf DRG}$ depends on a
certain parameter $p(\_,\_)$.
In order to discover that parameter, we should first analyze the
(length of the)
projection onto $\pseudok AG = \FG A$ of the elements of the form $w(i,a)$,
where $w$ is a $\kappa$-term.

Consider the alphabets $B_1 = (A \times \nn) \uplus
\{[^{-1},{]}^{-1}\}$ and
$B_2 = (A \times \nn) \uplus \{[^{-1},[^{-2},{]}^{-1},{]}^{-2}\}$.
Let $x$ be a well-parenthesized word over $B_2$. The \emph{expansion}
of $x$ is the well-parenthesized
word ${\sf exp}(x)$
obtained by successively applying the rewriting
rule $[^{-2}y]^{-2} \to [^{-1}y]^{-1}[^{-1}y]^{-1}$, whenever $y$ is a
well-parenthesized word.
It is clear that $\om x $ and $\om{{\sf exp} (x)}$ represent the same
$\kappa$-word and that $x$ is a 
well-parenthesized word over $B_1$.
Further, we have the following.
\begin{lem}\label{l:9}
  Let $x$ be a nonempty well-parenthesized word over $B_1$ and $i \in
  c_\nn(x)$.
  Then, $\Tt_i(x)$ is a well-parenthesized word over $B_2$ and
  $\card{{\sf exp}(\Tt_i(x))} \le \frac{1}{2} (\card x^2 + 2\card x -3)$.
  Moreover, this upper bound is tight for all odd values of $\card x$.
\end{lem}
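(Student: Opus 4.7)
The plan is to proceed by strong induction on $\card x$. That $\Tt_i(x)$ is indeed well-parenthesized over $B_2$ is a direct inspection of the recursive definition of $\Tt_i$: every clause either discards a well-parenthesized prefix, descends into a well-parenthesized sub-factor, or converts an existing $[^{-1} y ]^{-1}$ into $[^{-2} y ]^{-2}$. So the output stays well-parenthesized, and the only new symbols introduced are $[^{-2}$ and $]^{-2}$, which lie in $B_2\setminus B_1$.

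For the length bound, when $\card x = 1$ the word $x$ must equal $a_i$, forcing $\Tt_i(x) = \varepsilon$ and reducing the inequality to $0 \le 0$. Assume $\card x = n \ge 2$ and decompose $x = u\,z\,v$, where $u$ is the (possibly empty) well-parenthesized prefix built from the initial atoms not containing the index~$i$, where $z$ is the first ``atom'' containing $i$, and where $v$ is the remaining suffix. The atom $z$ is either a single letter $a_i$ or a block $[^{-1} y ]^{-1}$ with $i \in c_\nn(y)$. In the first case $\Tt_i(x) = v$ contains no $[^{-2}$-brackets, so $\card{{\sf exp}(\Tt_i(x))} = \card v \le n - 1$, well within the claimed bound. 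In the second case the definition of $\Tt_i$ gives $\Tt_i(x) = \Tt_i(y)\,[^{-2} y ]^{-2}\, v$; since both $y$ and $v$ lie in $\dyck{B_1}$, the operation ${\sf exp}$ distributes over concatenation and acts only on the middle factor, producing
\[
  \card{{\sf exp}(\Tt_i(x))} \;=\; \card{{\sf exp}(\Tt_i(y))} \,+\, 2\card y \,+\, 4 \,+\, \card v.
\]

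Applying the inductive hypothesis to $y$ (legitimate because $\card y < n$), and writing $n = \card u + \card y + 2 + \card v$, the required inequality reduces after a short computation to $(\card y + 2)^2 \le n^2$, which holds because $\card y + 2 \le n$. This bracket case is the heart of the proof; the main (though modest) obstacle is simply to organise the decomposition cleanly enough that the inductive hypothesis on $y$ combines with the linear contributions from the duplicated $y$ and from $v$ without any slack loss.

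Finally, for tightness at odd $n = 2k+1$, I would exhibit the iterated family $x_k = [^{-1} x_{k-1} ]^{-1}$ with $x_0 = a_i$, so that $\card{x_k} = 2k+1$. In this construction the prefix $u$ and suffix $v$ of the inductive step are both empty at every level, so the inequality above is saturated. Writing $\ell_k = \card{{\sf exp}(\Tt_i(x_k))}$, this yields the recurrence $\ell_k = \ell_{k-1} + 2\card{x_{k-1}} + 4 = \ell_{k-1} + 4k + 2$ with $\ell_0 = 0$, whose closed form is $\ell_k = 2k(k+2)$, matching $\tfrac{1}{2}\bigl((2k+1)^2 + 2(2k+1) - 3\bigr)$ and establishing tightness for every odd~$n$.
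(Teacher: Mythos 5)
Your proof is correct and takes essentially the same route as the paper: induction on $\card x$ whose only substantive case is a bracket atom containing the index, where $\Tt_i(x) = \Tt_i(y)\,[^{-2}y]^{-2}\,v$ and the induction hypothesis on $y$ yields the bound, together with tightness via the nested family $x_k = [^{-1}x_{k-1}]^{-1}$, $x_0 = a_i$, which is exactly the paper's witness $u_{2n+1} = \mu_n(\vx,\vy,\vq)$. If anything, your write-up is more explicit than the paper's, which dismisses the prefix and letter cases as clear and leaves the final estimate to the reader.
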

\begin{proof}
  The fact that $\Tt_i(x)$ is a well-parenthesized word over $B_2$
  follows immediately from the definition of $\Tt_i$.
  To prove the inequality, we proceed by induction on $\card x$.
  If $x = a_i$, then $\Tt_i(x)$ is the empty word and so, the result holds.
  Let $x$ be a well-parenthesized word over $B_1$ such that $\card x =
  n$.
  The inequality holds clearly, unless $x$ is of the form
  $x = [^{-1}y{]}^{-1} z$, with $y$ and $z$ well-parenthesized words
  over $B_1$, $y$ nonempty and $i \in c_\nn(y)$.
  In that case, we have $\Tt_i(x) = \Tt_i(y) [^{-2}y{]}^{-2} z $.
  Using induction hypothesis on $y$, one may deduce that
  $\card{{\sf exp}(\Tt_i(x))} \le \frac{1}{2} (\card x^2 + 2\card x -3)$.
  Finally, let $\vx = (a_1,\varepsilon,\varepsilon,\ldots)$, $\vy =
  (\varepsilon,\varepsilon,\ldots)$, $\vq = (-1,-1,\ldots)$, and
  $u_{2n+1} =  \mu_n(\vx,\vy,\vq)$
  (recall the notation used in Lemma \ref{5.7comp}).
  Then, $u_{2n+1}$ is a well-parenthesized word over $B_1$ of
  length $2n+1$.
  Moreover, using Lemma \ref{5.7comp}, we may compute
  \begin{align*}
    \card{{\sf exp}(\Tt_1(u_{2n+1}))} & = \card{ {\sf exp}(\Tt_1 (\mu_0(\vx, \vy, \vq)) \cdot
    \xi_0(\vx, \vy, \vq)  \cdot \xi_{1}(\vx, \vy, \vq) \cdots
                                        \xi_{n-1}(\vx, \vy, \vq))}
    \\ & = \card{{\sf exp} \left([^{-2}\mu_0(\vx, \vy, \vq)]^{-2}
         \cdots
         [^{-2}\mu_{n-1}(\vx, \vy, \vq)]^{-2}\right)}
    \\ & = \sum_{k = 0}^{n-1}2(\card{\mu_k(\vx, \vy, \vq)} + 2)
    \\ & = 2n^2 + 4n \quad \text{because $\card{\mu_k(\vx, \vy, \vq)}
         = 2k+1$}
    \\ & = \frac 12 (\card{u_{2n+1}}^2 + 2\card{u_{2n+1}} - 3)
  \end{align*}
  and the result follows.
\end{proof}

Also, as a straightforward consequence of the definition of $\tp_a$,
the following holds.
\begin{lem}\label{l:11}
  Let $x$ be a nonempty well-parenthesized word over $B_1$ and $a \in
  A$.
  Then, $\tp_a(x)$ is also a well-parenthesized word over $B_1$ and
  $\card{{\sf exp}(\tp_a(x))} = \card{\tp_a(x)}\le \card x$.\qed
\end{lem}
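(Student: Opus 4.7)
The plan is to proceed by a straightforward structural induction on $\card x$, following the four clauses in the recursive definition of $\tp_a$. The first observation, which drives everything else, is that $\tp_a$ never introduces new bracket symbols: it only deletes letters, deletes material starting from the first occurrence of $a_i$, or recurses inside an existing pair $[^q \cdots ]^q$. In particular, no $[^{-2}$ or $]^{-2}$ is ever created, so whenever the input $x$ lies in $B_1^*$, so does $\tp_a(x)$, which immediately gives $\mathsf{exp}(\tp_a(x)) = \tp_a(x)$ and therefore $\card{\mathsf{exp}(\tp_a(x))} = \card{\tp_a(x)}$.

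For the remaining two claims (well-parenthesizedness and the length bound $\card{\tp_a(x)} \le \card x$), I would argue by induction on $\card x$, splitting according to the defining clauses. In the base case $x = \varepsilon$ (or, more usefully, when the first relevant letter is $a_i$ itself), $\tp_a(x)$ is empty and both claims are trivial. In the clause $\tp_a(y z) = y \tp_a(z)$ with $a \notin c_A(y)$, the factor $y$ is well-parenthesized by hypothesis and $\tp_a(z)$ is well-parenthesized by the induction hypothesis, so their concatenation is well-parenthesized; moreover $\card{y \tp_a(z)} = \card y + \card{\tp_a(z)} \le \card y + \card z = \card{y z}$. In the clause $\tp_a([^q y ]^q z) = \tp_a(y)$ with $a \in c_A(y)$, the induction hypothesis applied to $y$ gives both that $\tp_a(y)$ is well-parenthesized and that $\card{\tp_a(y)} \le \card y < \card{[^q y ]^q z}$.

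Combining the three pieces, $\tp_a(x)$ is a well-parenthesized word over $B_1$ and $\card{\mathsf{exp}(\tp_a(x))} = \card{\tp_a(x)} \le \card x$, as required. No step here is genuinely difficult; the only place where one must be slightly careful is making sure that in the recursive clause $\tp_a(yz) = y\tp_a(z)$ the factor $y$ is indeed itself well-parenthesized over $B_1$ (which is part of how the defining clauses are to be applied, since otherwise the factorization $yz$ is not the one selected by the recursion), so that concatenating it with $\tp_a(z)$ yields a well-parenthesized word.
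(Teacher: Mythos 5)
Your proof is correct, and it is precisely the routine verification the paper leaves implicit: the paper states this lemma with no proof at all (``as a straightforward consequence of the definition of $\tp_a$''), and your structural induction over the defining clauses of $\tp_a$ — noting in particular that, unlike $\Tt_i$, the map $\tp_a$ never alters bracket exponents, so no $[^{-2}$, $]^{-2}$ symbols appear and ${\sf exp}$ acts as the identity — is exactly the intended argument. Your closing remark about which factorization $yz$ the recursion selects, together with the observation that a well-parenthesized $y$ cannot end in an opening bracket (so the concatenation $y\,\tp_a(z)$ creates no forbidden $[^q\,]^q$ factor), covers the only points where any care is needed.
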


Given a well-parenthesized word $x$ over $B_2$, we define the
\emph{linearization over $A$} of $x$ to be the word ${\sf lin}(x)$
over the alphabet 
$A\uplus A^{-1}$ obtained by applying the rewriting rules
$[^{-1}a_i]^{-1} \to a^{-1}$, $[^{-1}yz{]}^{-1} \to
[^{-1}z]^{-1}[^{-1}y]^{-1}$ and $[^{-2}y]^{-2} \to
[^{-1}y]^{-1}[^{-1}y]^{-1}$ to $x$ (with $a_i \in c(x)$ and
$y$, $z$ well-parenthesized words).
It is easy to see that ${\sf lin}(x) = {\sf lin}({\sf exp}(x))$ and
that if $x$ is a well-parenthesized word over $B_1$, then $O(\card{{\sf
    lin}(x)}) = O (\card x)$.
Consequently, we have the next result.

\begin{cor}\label{c:15}
  Let $w$ be an $\kappa$-term
  and $(i,a) \in [0, \card{\overline w}{[} \times c_A(\w)$.
  Then, $\card{{\sf lin}(\w(i,a))}$ belongs to $O(\card w^2)$.\qed
\end{cor}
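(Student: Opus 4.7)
The strategy is to bound $\card{{\sf lin}(\w(i,a))} = \card{{\sf lin}(\tp_a(\Tt_i(\w)))}$ by combining Lemma~\ref{l:9} with a routine monotonicity property of $\tp_a$ under $\sf lin$.

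First, because $w$ is a $\kappa$-term, the only exponent appearing in $w$ is $\omega-1$, so every bracket in $\w$ has label $-1$; in particular, $\w$ is well-parenthesized over $B_1$. Lemma~\ref{l:9} therefore applies and yields $\card{{\sf exp}(\Tt_i(\w))} \le \tfrac{1}{2}(\card{\w}^{2} + 2\card{\w} - 3)$. Since $\card{\w} \in O(\card w)$, this gives $\card{{\sf exp}(\Tt_i(\w))} = O(\card w^{2})$. Using ${\sf lin}(z) = {\sf lin}({\sf exp}(z))$ together with the remark just before the corollary (that $O(\card{{\sf lin}(y)}) = O(\card y)$ whenever $y$ is well-parenthesized over $B_1$) applied to $y = {\sf exp}(\Tt_i(\w))$, I obtain $\card{{\sf lin}(\Tt_i(\w))} = O(\card w^{2})$.

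Next, I would establish the auxiliary monotonicity claim
\[ \card{{\sf lin}(\tp_a(x))} \le \card{{\sf lin}(x)} \quad \text{for every well-parenthesized word $x$ over $B_2$}. \]
This is a structural induction on $\card x$, handling the cases of the recursive definition of $\tp_a$. For $\tp_a(yz) = y\tp_a(z)$ with $a \notin c_A(y)$, linearization distributes and one invokes the inductive bound on $\tp_a(z)$. For $\tp_a([^{q}y_0]^{q}z) = \tp_a(y_0)$ with $a \in c_A(y_0)$ and $q \in \{-1,-2\}$, the inductive bound gives $\card{{\sf lin}(\tp_a(y_0))} \le \card{{\sf lin}(y_0)}$, while the right-hand side already contains (one or two copies of) ${\sf lin}(y_0)$ plus a contribution from $z$, so the inequality follows.

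Combining the two steps, I get
\[ \card{{\sf lin}(\w(i,a))} = \card{{\sf lin}(\tp_a(\Tt_i(\w)))} \le \card{{\sf lin}(\Tt_i(\w))} = O(\card w^{2}), \]
which is the desired conclusion. The only mildly technical point is the monotonicity claim; but because $\tp_a$ never duplicates any part of $x$, this is a routine structural induction rather than a substantial obstacle.
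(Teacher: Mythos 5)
Your proof is correct, and its skeleton is the same as the paper's: decompose $\w(i,a) = \tp_a(\Tt_i(\w))$, apply Lemma~\ref{l:9} to get the quadratic bound on the tail, and convert this into a bound on the linearization via ${\sf lin}(x) = {\sf lin}({\sf exp}(x))$ and the fact that $\card{{\sf lin}(\cdot)}$ is linear in the length for words over $B_1$. The one point where you diverge is the treatment of the prefix operator: the paper routes this step through Lemma~\ref{l:11}, which bounds $\card{\tp_a(x)}$ only for $x$ a well-parenthesized word over $B_1$, whereas you prove directly that $\card{{\sf lin}(\tp_a(x))} \le \card{{\sf lin}(x)}$ for every well-parenthesized $x$ over $B_2$. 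Your variant is slightly more general and in fact slightly more careful: since $\Tt_i(\w)$ lives in $\dyck{B_2}$ rather than $\dyck{B_1}$, Lemma~\ref{l:11} does not literally apply to it; to use it one must first expand, which requires the easy but unstated commutation of $\tp_a$ with ${\sf exp}$ (equivalently, that ${\sf lin}(\tp_a(x)) = {\sf lin}(\tp_a({\sf exp}(x)))$). Your monotonicity claim, proved by structural induction directly over $B_2$ --- where the $[^{-2}\,\cdot\,]^{-2}$ case goes through because the duplication occurs only on the larger side of the inequality --- sidesteps this issue and closes that small gap. The cost is redoing a routine induction that the paper gets essentially for free from Lemma~\ref{l:11}; the benefit is a self-contained argument that never needs to interchange $\tp_a$ and ${\sf exp}$.
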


Now, we wish to compute ${\sf lin} (x)$, for a given well-parenthesized word
over $B_2$.
Recall the tree representation of $\kappa$-terms exemplified in Figure
\ref{fig:12}.
We may recover, in linear time, such a tree
representation for $\om x$, for a well-parenthesized word $x$ over
$B_1$.
Furthermore, if we are given a well-parenthesized word over $B_2$, we
may compute, also in linear time, a tree representation for
$\om{{\sf exp}(x)}$.
That amounts to, whenever we have a factor of the form $[^{-2}y]^{-2}$
in $x$, to include twice a subtree representing $[^{-1}y]^{-1}$.

On the other hand, since solving the word problem in $\FG A$ (for words
written over the alphabet $A \cup A^{-1}$) is a linear issue in the
size of the input, by Corollary \ref{c:15}, we may take $p(u,v) =
\max\{\card u^2, \card v^2\}$. Thus, we have proved the following.

\begin{prop}
  The $\kappa$-word problem over ${\sf DRG}$ is decidable in
  $O( m^3\card A )$-time, where 
  $m$ is the maximum length of the inputs.\qed
\end{prop}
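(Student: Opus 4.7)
The plan is to apply Theorem \ref{t:7} with $\h = \G$, so that $\DRH = {\sf DRG}$, and simply exhibit an appropriate function $p(u,v)$. Since Theorem \ref{t:7} gives the bound $O((p(u,v) + m) m \card A)$, it suffices to show that we may take $p(u,v) \in O(m^2)$, where $m = \max\{\card u, \card v\}$; then $(p(u,v) + m)m\card A \in O(m^3 \card A)$, as required.

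To produce such a $p(u,v)$, I would first recall that the free pro-$\G$ semigroup on $A$ is just $\FG A$, so solving the $\kappa$-word problem over $\G$ reduces to the word problem in the free group, which can be done in time linear in the length of the inputs once they are presented as words over $A \cup A^{-1}$. Hence the only thing to estimate is the cost of transforming any factor $u(i,a)$ or $v(j,b)$ into such a linearized form and the length of the resulting word.

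For this I would use the machinery already set up in the section: for a $\kappa$-term $w$, each $\w(i,a)$ is a well-parenthesized word over $B_1$, and the linearization ${\sf lin}(\w(i,a))$ (a word over $A \cup A^{-1}$) can be computed in linear time in $\card{{\sf lin}(\w(i,a))}$ via the tree representation of $\om{{\sf exp}(\w(i,a))}$. By Corollary \ref{c:15}, $\card{{\sf lin}(\w(i,a))} \in O(\card w^2) \subseteq O(m^2)$, and the same holds for $v(j,b)$. Thus each comparison of $\rho_\G(u(i,a))$ and $\rho_\G(v(j,b))$ can be carried out in $O(m^2)$ time: linearize both factors in $O(m^2)$ time, then run the free-group word problem on words of length $O(m^2)$, which again costs $O(m^2)$. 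Therefore we may set $p(u,v) = m^2$.

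Plugging $p(u,v) = m^2$ into Theorem \ref{t:7} gives time complexity $O((m^2 + m) m \card A) = O(m^3 \card A)$, finishing the argument. There is no real obstacle here beyond invoking the previously established bounds; the only point to be a bit careful about is to check that the preprocessing of ${\sf exp}$ and ${\sf lin}$ does not blow up the bound, but this follows directly from the preceding discussion showing $\card{{\sf exp}(\Tt_i(x))}$, and hence $\card{{\sf lin}(\w(i,a))}$, stays quadratic in $\card w$.
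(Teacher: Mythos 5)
Your proposal is correct and follows essentially the same route as the paper: invoke Theorem \ref{t:7} with $\h = \G$, reduce the $\kappa$-word problem over $\G$ to the linear-time word problem in $\FG A$, and use Corollary \ref{c:15} (via the ${\sf exp}$/${\sf lin}$ machinery) to justify taking $p(u,v) = \max\{\card u^2, \card v^2\} \in O(m^2)$, yielding $O(m^3\card A)$. The only nitpick is that $\w(i,a)$ is in general a well-parenthesized word over $B_2$ (tails introduce $[^{-2}$-brackets), not over $B_1$, but since you correctly pass through ${\sf exp}$ and Corollary \ref{c:15}, this does not affect the argument.
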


\paragraph{\bf Acknowledgments.}  This work is part of the author's
Ph.D. thesis, written under supervision of Professor Jorge Almeida, to
whom the author is deeply grateful.
The work was partially supported by CMUP
(UID/MAT/ 00144/2013), which is funded by FCT (Portugal) with national
(MEC) and European structural funds through the programs FEDER, under
the partnership agreement PT2020, and by FCT doctoral scholarship
(SFRH/BD/ 75977/2011), with national (MEC) and European
structural funds through the program POCH.
\def\cprime{$'$}
\providecommand{\bysame}{\leavevmode\hbox to3em{\hrulefill}\thinspace}
\providecommand{\MR}{\relax\ifhmode\unskip\space\fi MR }
\providecommand{\MRhref}[2]{%
  \href{http://www.ams.org/mathscinet-getitem?mr=#1}{#2}
}
\providecommand{\href}[2]{#2}

\end{document}